\title{The topological pigeonhole principle for ordinals}
\author{Jacob Hilton}
\address{School of Mathematics\\University of Leeds\\Leeds LS2 9JT\\UK}
\email{mmjhh@leeds.ac.uk}
\thanks{This research was conducted under the supervision of John K. Truss with the support of an EPSRC Doctoral Training Grant Studentship.}
\subjclass[2010]{03E02}
\theoremstyle{plain}
\newtheorem{theorem}{Theorem}[section]
\newtheorem{lemma}[theorem]{Lemma}
\newtheorem{proposition}[theorem]{Proposition}
\newtheorem{corollary}[theorem]{Corollary}
\theoremstyle{definition}
\newtheorem{definition}[theorem]{Definition}
\newtheorem*{notation}{Notation}
\begin{document}

\begin{abstract}
Given a cardinal $\kappa$ and a sequence $\left(\alpha_i\right)_{i\in\kappa}$ of ordinals, we determine the least ordinal $\beta$ (when one exists) such that the topological partition relation
\[\beta\rightarrow\left(top\,\alpha_i\right)^1_{i\in\kappa}\]
holds, including an independence result for one class of cases. Here the prefix ``$top$'' means that the homogeneous set must have the correct topology rather than the correct order type. The answer is linked to the non-topological pigeonhole principle of Milner and Rado.
\end{abstract}

\maketitle

\section{Introduction}

Dirichlet's pigeonhole principle describes how if a large enough number of items are put into few enough containers, then there is some container that contains not too small a number of items. Milner and Rado \cite{milnerrado} generalised this principle to ordinals by answering the following question. Given a cardinal $\kappa$ and a sequence $\left(\alpha_i\right)_{i\in\kappa}$ of ordinals, what is the least ordinal $\beta$ such that whenever $\beta$ is partitioned into $\kappa$ pieces, there is some $i\in\kappa$ such that the order type of the $i\text{th}$ piece is at least $\alpha_i$? In the partition calculus of Erd\H{o}s and Rado, this is written as $\beta\rightarrow\left(\alpha_i\right)^1_{i\in\kappa}$.

With the development of structural Ramsey theory, questions of this character have become much more diverse. In particular, people have asked for various topological spaces $X$ and $Y$ whether it is the case that whenever $Y$ is partitioned into $\kappa$ pieces, one of the pieces has a subspace homeomorphic to $X$. This is written as $Y\rightarrow\left(top\,X\right)^1_\kappa$. Ordinals have been studied in this context by endowing them with the order topology, but the focus has been on a fairly small number of key cases.

In this article we answer the topological question for ordinals in full generality. Given a cardinal $\kappa$ and a sequence $\left(\alpha_i\right)_{i\in\kappa}$ of ordinals, we determine the least ordinal $\beta$ (when one exists) such that whenever $\beta$ is partitioned into $\kappa$ pieces, there is some $i\in\kappa$ such that the $i\text{th}$ piece has a subset homeomorphic to $\alpha_i$ under the subspace topology. We write this as $\beta\rightarrow\left(top\,\alpha_i\right)^1_{i\in\kappa}$.

Many of the cases of this question that have already been answered are given in a summary article by Weiss \cite{weiss}. These include the following; here we write $\beta\rightarrow\left(top\,\alpha\right)^1_\kappa$ for $\beta\rightarrow\left(top\,\alpha_i\right)^1_{i\in\kappa}$ when $\alpha_i=\alpha$ for all $i\in\kappa$.

\begin{enumerate}
\item\label{multiplicativelyindivisbleproperty}
If $\alpha\in\omega_1\setminus\left\{0,1\right\}$, then $\alpha\rightarrow\left(top\,\alpha\right)^1_2$ if and only if $\alpha=\omega^{\omega^\beta}$ for some $\beta\in\omega_1$.
\item
$\omega_1\rightarrow\left(top\,\alpha\right)^1_{\aleph_0}$ for all $\alpha\in\omega_1$.
\item
If $\alpha\in\omega_2$, then $\alpha\nrightarrow\left(top\,\omega_1\right)^1_2$.
\item
If $V=L$ then $\alpha\nrightarrow\left(top\,\omega_1\right)^1_2$ for all ordinals $\alpha$ \cite{prikrysolovay}, but it is equiconsistent with the existence of a Mahlo cardinal that $\omega_2\rightarrow\left(top\,\omega_1\right)^1_2$ \cite{shelah}.
\end{enumerate}

Note that for $\alpha\in\omega_1\setminus\left\{0\right\}$, $\alpha\rightarrow\left(\alpha\right)^1_2$ if and only if $\alpha=\omega^\beta$ for some $\beta\in\omega_1$. Thus in view of (\ref{multiplicativelyindivisbleproperty}) it is natural to ask whether or not there is a link between the topological and non-topological pigeonhole principles. Our main breakthrough is a full analysis of the case in which $\kappa$ is finite and $\alpha_i\in\omega_1$ for all $i\in\kappa$, where we bring this link to light. Here $\#$ denotes the natural sum operation, and $P^{top}\left(\alpha_i\right)_{i\in\kappa}$ (respectively $P^{ord}\left(\alpha_i\right)_{i\in\kappa}$) denotes the least ordinal $\beta$ such that $\beta\rightarrow\left(top\,\alpha_i\right)^1_{i\in\kappa}$ (respectively $\beta\rightarrow\left(\alpha_i\right)^1_{i\in\kappa}$).

\begin{theorem}\label{link}
Let $\alpha_1,\alpha_2,\dots,\alpha_k\in\omega_1\setminus\left\{0\right\}$.
\begin{enumerate}
\item\label{linkpartsuccessor}
$P^{top}\left(\omega^{\alpha_1}+1,\omega^{\alpha_2}+1,\dots,\omega^{\alpha_k}+1\right)=\omega^{\alpha_1\#\alpha_2\#\cdots\#\alpha_k}+1$.
\item\label{linkpartpowers}
$P^{top}\left(\omega^{\alpha_1},\omega^{\alpha_2},\dots,\omega^{\alpha_k}\right)=\omega^{P^{ord}\left(\alpha_1,\alpha_2,\dots,\alpha_k\right)}$.
\end{enumerate}
\end{theorem}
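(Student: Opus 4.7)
The plan is to prove both parts by matching upper and lower bounds, with Cantor--Bendixson (CB) analysis of ordinal subspaces as the common thread. The key topological facts are that a subspace $X$ of an ordinal is homeomorphic to $\omega^\alpha + 1$ exactly when $X$ is compact with maximum of CB rank $\alpha$ in $X$, and homeomorphic to $\omega^\alpha$ exactly when $X$ is obtained from such a compact copy by deleting the maximum. I would first establish two transfer lemmas. Lemma (i): every point of CB rank $\alpha$ in a subspace $A$ is the maximum of some compact subspace of $A$ of CB rank $\alpha$, proved by recursion on $\alpha$. Lemma (ii), the natural-sum splitting of CB rank: for any finite partition $A = A_1 \cup \cdots \cup A_k$ and any point $x$, the CB rank of $x$ in $A \cup \{x\}$ is bounded above by the natural sum of its CB ranks in the sets $A_i \cup \{x\}$. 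Lemma (ii) is the main workhorse and will be proved by transfinite induction on the ambient rank, mirroring the classical sub-additivity proof for ordinary CB rank.

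Part~(\ref{linkpartsuccessor}) follows quickly from these lemmas. For the upper bound, the maximum of $\omega^{\alpha_1 \# \cdots \# \alpha_k} + 1$ has ambient CB rank $\alpha_1 \# \cdots \# \alpha_k$, so by (ii) together with strict monotonicity of natural sum some colour class $i$ retains CB rank at least $\alpha_i$ at this point, and (i) extracts the required copy of $\omega^{\alpha_i} + 1$. For the lower bound I would use the canonical Cantor-normal-form partition of $\alpha_1 \# \cdots \# \alpha_k$ into subsets $E_1, \ldots, E_k$ of order types $\alpha_1, \ldots, \alpha_k$, labelling each $\gamma < \omega^{\alpha_1 \# \cdots \# \alpha_k}$ by the index $i$ for which the least exponent in the Cantor normal form of $\gamma$ (equivalently, the ambient CB rank of $\gamma$) lies in $E_i$. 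A direct computation then shows that the CB rank of $\gamma$ in colour class $i$ equals the position of its least CNF exponent within $E_i$, which is strictly below $\alpha_i$, ruling out a copy of $\omega^{\alpha_i} + 1$ in colour~$i$.

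Part~(\ref{linkpartpowers}) adapts the framework to non-compact target spaces. The lower bound is straightforward: take a witnessing colouring on an ordinal below $P^{ord}(\alpha_1, \ldots, \alpha_k)$ and pull it back through the CNF map to $\omega^{P^{ord}(\alpha_1, \ldots, \alpha_k)}$, then repeat the CB-rank verification from Part~(\ref{linkpartsuccessor}). For the upper bound I would proceed by induction on $P^{ord}(\alpha_1, \ldots, \alpha_k)$. Given a colouring $c$ of $\omega^{P^{ord}(\alpha_1, \ldots, \alpha_k)}$, for each $\beta < P^{ord}(\alpha_1, \ldots, \alpha_k)$ apply Part~(\ref{linkpartsuccessor}) inside a suitable compact interval around $\omega^\beta$ to pick out a compact copy of $\omega^\beta + 1$ of some colour; these choices induce a $k$-colouring of $P^{ord}(\alpha_1, \ldots, \alpha_k)$, on which the Milner--Rado pigeonhole produces a cofinal sequence of exponents of a common colour $i$, and their compact pieces are then concatenated into the required topological copy of $\omega^{\alpha_i}$.

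The main obstacle is the upper bound of Part~(\ref{linkpartpowers}): ensuring that the concatenated compact pieces truly assemble into a subspace homeomorphic to $\omega^{\alpha_i}$, rather than merely a subset of order type $\alpha_i$ or $\omega^{\alpha_i}$ with the wrong CB structure. My plan is to arrange the compact pieces so their CB ranks form a cofinal sequence in $\alpha_i$ and their supports lie in pairwise disjoint intervals, so that stitching preserves each piece's internal subspace topology; verifying that the resulting subspace has the Cantor--Bendixson structure of $\omega^{\alpha_i}$ will be the core calculation.
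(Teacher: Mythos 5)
Your Part~(\ref{linkpartsuccessor}) strategy is genuinely different from the paper's (which deduces both positive relations from Weiss's theorem, Corollary \ref{weisscorollary}) and is essentially workable: your Lemma (i) is standard, and your Lemma (ii) is true for subspaces of ordinals, though its proof is not the routine induction you suggest --- at the successor step the rank at $x$ in the union can be carried by points of one colour at which the \emph{other} colour accumulates with high rank, and one needs a case split using strict monotonicity of $\#$ (if a point $y$ of colour $B$ near $x$ has $B$-rank $<\rho_B(x)$ but union-rank $\geq\rho_A(x)\#\rho_B(x)$, then $A$ must accumulate at $y$ with rank $\geq\rho_A(x)+1$, and closedness of derived sets transfers this back to $x$). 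Also note a small slip in your deduction: if the colour $i$ with $\rho_i\geq\alpha_i$ at the top point $x$ is not $c(x)$, then the compact copy produced by Lemma (i) has top point $x$ of the wrong colour; you need the refinement that either $\rho_{c(x)}\geq\alpha_{c(x)}$, or $\rho_i\geq\alpha_i+1$ for some $i\neq c(x)$, in which case colour class $i$ itself contains a point of rank $\geq\alpha_i$. Your lower bound for (\ref{linkpartsuccessor}) is the paper's (Proposition \ref{rankcolouring}).

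The genuine gap is the positive direction of Part~(\ref{linkpartpowers}), which is exactly where the paper invokes Weiss's theorem and where your replacement fails. Colouring the exponents $\beta<P^{ord}\left(\alpha_1,\dots,\alpha_k\right)$ by the colour of one chosen compact copy of $\omega^{\beta}+1$ and applying the ordinal pigeonhole gives, for some $i$, a set of exponents of \emph{order type} $\alpha_i$ with a single monochromatic compact piece at each level; this is nowhere near enough to assemble $\omega^{\alpha_i}$. Concretely, take $k=2$, $\alpha_1=\alpha_2=2$, so $P^{ord}\left(2,2\right)=2\odot2=3$ and the claim is $\omega^{3}\rightarrow\left(top\,\omega^{2},\omega^{2}\right)^1$: your scheme may hand colour $0$ the exponent levels $\left\{0,1\right\}$, i.e.\ one copy of $2$ and one copy of $\omega+1$, whereas any copy of $\omega^{2}$ requires infinitely many pairwise disjoint monochromatic copies of $\omega+1$ (and in general $\omega^{\alpha_i}$ for successor $\alpha_i=\gamma+1$ needs infinitely many disjoint copies of $\omega^{\gamma}+1$, not a set of levels of order type $\alpha_i$). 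Moreover the preliminary step already breaks down: to guarantee a monochromatic $\omega^{\beta}+1$ inside a compact interval you need an interval of type $\omega^{\beta\#\cdots\#\beta}+1$ ($k$-fold), which need not exist below $\omega^{P^{ord}\left(\alpha_1,\dots,\alpha_k\right)}$ (in the example, $\omega^{2\#2}+1=\omega^{4}+1\not\subseteq\omega^{3}$). What makes the paper's proof work is that Weiss's theorem produces, for a single colouring, monochromatic cofinal pieces $X_i\cong\beta_{S_i}$ whose exponent blocks $S_i$ \emph{partition} the entire Cantor normal form of $\delta=\alpha_1\odot\cdots\odot\alpha_k$; if every $\beta_{S_i}<\omega^{\alpha_i}$ the natural sum of the corresponding exponents would equal $\delta$ with each summand below $\alpha_i$, contradicting the definition of the Milner--Rado sum. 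Your proposal has no mechanism delivering this simultaneous decomposition, and the ``concatenation'' fix you flag as the main obstacle cannot be carried out from the data your pigeonhole step provides. (Your lower bound for (\ref{linkpartpowers}) is also misstated --- one colours each $\zeta\leq\omega^{\eta}\cdot m+1$ with $\eta<P^{ord}\left(\alpha_1,\dots,\alpha_k\right)$, not $\omega^{P^{ord}\left(\alpha_1,\dots,\alpha_k\right)}$ itself --- but the intended argument is the paper's and is fine.)
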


We prove this (in Theorems \ref{linksuccessor} and \ref{linkpowers}) using a result of Weiss \cite[Theorem 2.3]{baumgartner}. This was published by Baumgartner, who used it to show that $\omega^{\omega^\alpha\cdot\left(2m+1\right)}\rightarrow\left(top\,\omega^{\omega^\alpha\cdot\left(m+1\right)}\right)^1_2$ for all $m\in\omega$ and all $\alpha\in\omega_1$ \cite[Corollary 2.5]{baumgartner}. The above theorem greatly generalises this, thereby utilising the full potential of Weiss's result.

The only case in which we provide no answer in $\mathsf{ZFC}$ is when $1<\alpha_i\leq\omega_1$ for all $i\in\kappa$ and we have equality in at least two instances. In this case we have an independence result. Prikry and Solovay showed that if $V=L$ then $\alpha\nrightarrow\left(top\,\omega_1\right)^1_2$ for all ordinals $\alpha$, from which it follows that it is consistent for $P^{top}\left(\alpha_i\right)_{i\in\kappa}$ not to exist. On the other hand, we show that $P^{top}\left(\alpha_i\right)_{i\in\kappa}\geq\operatorname{max}\left\{\omega_2,\kappa^+\right\}$ and deduce from a result of Shelah that it is consistent to have equality in every case, assuming the consistency of the existence of a supercompact cardinal.

It remains open which intermediate values can consistently be taken by these topological pigeonhole numbers, and whether or not Shelah's consistency result can be strengthened to equiconsistency.

\section{Preliminaries}

\subsection{Partition relation notation}

\begin{notation}
We use the von Neumann definitions of ordinals and cardinals, namely that each is the set of all smaller ordinals, and unless stated otherwise all arithmetic will be ordinal arithmetic. We denote the cardinal successor of a cardinal $\kappa$ by $\kappa^+$, and we use interval notation in the usual fashion, so that for example if $\alpha$ and $\beta$ are ordinals then $\left[\alpha,\beta\right)=\left\{x:\alpha\leq x<\beta\right\}$. We denote the cardinality of a set $X$ by $\left|X\right|$, and we use the symbol $\cong$ to denote the homeomorphism relation.
\end{notation}

In this article we study two different notions of partition relation. We begin by defining both of these; here $n$ is a positive integer, $\kappa$ is a cardinal, and $\left[X\right]^n$ denotes the set of subsets of $X$ of size $n$ (except that for simplicity we take $\left[X\right]^1$ to be $X$).

\begin{definition}
Let $\beta$ be an ordinal and let $\alpha_i$ be an ordinal for each $i\in\kappa$. We write
\[\beta\rightarrow\left(\alpha_i\right)^n_{i\in\kappa}\]
to mean that for every function $c:\left[\beta\right]^n\rightarrow\kappa$ there exists some subset $X\subseteq\beta$ and some $i\in\kappa$ such that $X$ is order-isomorphic to $\alpha_i$ and $\left[X\right]^n\subseteq c^{-1}\left(\left\{i\right\}\right)$.
\end{definition}

\begin{definition}
Let $Y$ be a topological space and let $X_i$ be a topological space for each $i\in\kappa$. We write
\[Y\rightarrow\left(top\,X_i\right)^n_{i\in\kappa}\]
to mean that for every function $c:\left[Y\right]^n\rightarrow\kappa$ there exists some subspace $X\subseteq Y$ and some $i\in\kappa$ such that $X\cong X_i$ and $\left[X\right]^n\subseteq c^{-1}\left(\left\{i\right\}\right)$.
\end{definition}

When ordinals are used as topological spaces, they are assumed to have the order topology. When $\alpha_i=\alpha$ for all $i\in\kappa$ we write $\beta\rightarrow\left(\alpha\right)^n_\kappa$ for $\beta\rightarrow\left(\alpha_i\right)^n_{i\in\kappa}$, and similarly for the topological relation.

The function $c$ in these definitions will often be referred to as a \emph{colouring}, and we may say that \emph{$x$ is coloured with $i$} simply to mean that $c\left(x\right)=i$.

In this article we will be concerned with these partition relations exclusively in the case $n=1$. Clearly if $\beta\rightarrow\left(\alpha_i\right)^n_{i\in\kappa}$ and $\gamma>\beta$ then $\gamma\rightarrow\left(\alpha_i\right)^n_{i\in\kappa}$, and similarly for the topological relation. Thus it is sensible to make the following definition. 

\begin{definition}
Let $\alpha_i$ be an ordinal for each $i\in\kappa$.

We define the \emph{non-topological pigeonhole number} $P^{ord}\left(\alpha_i\right)_{i\in\kappa}$ to be the least ordinal $\beta$ such that $\beta\rightarrow\left(\alpha_i\right)^1_{i\in\kappa}$, and the \emph{topological pigeonhole number} $P^{top}\left(\alpha_i\right)_{i\in\kappa}$ to be the least ordinal $\beta$ such that $\beta\rightarrow\left(top\,\alpha_i\right)^1_{i\in\kappa}$.

We extend the usual ordering on the ordinals to include $\infty$ as a maximum. If there is no ordinal $\beta$ such that $\beta\rightarrow\left(top\,\alpha_i\right)^1_{i\in\kappa}$, then we say that $P^{top}\left(\alpha_i\right)_{i\in\kappa}$ does not exist and write $P^{top}\left(\alpha_i\right)_{i\in\kappa}=\infty$.
\end{definition}

Thus for example if $n_1,n_2,\dots,n_k\in\omega\setminus\left\{0\right\}$, then
\[P^{ord}\left(n_1,n_2,\dots,n_k\right)=P^{top}\left(n_1,n_2,\dots,n_k\right)=\sum_{i=1}^k\left(n_i-1\right)+1.\]

Note that $P^{top}\left(\left(\alpha_i\right)_{i\in\kappa},\left(1\right)_\lambda\right)=P^{top}\left(\alpha_i\right)_{i\in\kappa}$ for any cardinal $\lambda$, and that for fixed $\kappa$, $P^{top}\left(\alpha_i\right)_{i\in\kappa}$ is a monotonically increasing function of $\left(\alpha_i\right)_{i\in\kappa}$ (pointwise).

\subsection{The Cantor--Bendixson rank of an ordinal}

Central to the study of ordinal topologies are the notions of Cantor--Bendixson derivative and rank.

\begin{definition}
Let $X$ be a topological space. The \emph{Cantor--Bendixson derivative} $X^\prime$ of $X$ is defined by
\[X^\prime=X\setminus\left\{x\in X:x\text{ is isolated}\right\}.\]

The iterated derivatives of $X$ are defined for $\gamma$ an ordinal by
\begin{enumerate}
\item
$X^{\left(0\right)}=X$,
\item
$X^{\left(\gamma+1\right)}=\left(X^{\left(\gamma\right)}\right)^\prime$, and
\item
$X^{\left(\gamma\right)}=\bigcap_{\delta<\gamma}X^{\left(\delta\right)}$ when $\gamma$ is a non-zero limit.
\end{enumerate}
\end{definition}

Note that if $Y\subseteq X$ then $Y^\prime\subseteq X^\prime$ and hence by transfinite induction $Y^{\left(\gamma\right)}\subseteq X^{\left(\gamma\right)}$ for all ordinals $\gamma$.

\begin{definition}
Let $x$ be an ordinal. Then there is a sequence of ordinals $\gamma_1>\gamma_2>\dots>\gamma_n$ and $m_1,m_2,\dots,m_n\in\omega\setminus\left\{0\right\}$ such that
\[x=\omega^{\gamma_1}\cdot m_1+\omega^{\gamma_2}\cdot m_2+\cdots+\omega^{\gamma_n}\cdot m_n\]
(the \emph{Cantor normal form} of $x$). The \emph{Cantor--Bendixson rank} of $x$ is defined by
\[
\operatorname{CB}\left(x\right)=
\begin{cases}
\gamma_n,&\text{if $x>0$}\\
0,&\text{if $x=0$}.
\end{cases}
\]

This defines a function $\operatorname{CB}:\omega^\beta\rightarrow\beta$ for each non-zero ordinal $\beta$.
\end{definition}

The relationship between these two notions is given by the following simple result, which provides us with an alternative definition of Cantor--Bendixson rank that is entirely topological (though the two definitions need not agree on proper subspaces of ordinals).

\begin{lemma}\label{cantorbendixson}
Let $\alpha$ be an ordinal endowed with the order topology, and let $x\in\alpha$. Then the Cantor--Bendixson rank of $x$ is the greatest ordinal $\gamma$ such that $x\in \alpha^{\left(\gamma\right)}$.
\end{lemma}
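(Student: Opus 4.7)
The plan is to prove by transfinite induction on $\gamma$ the stronger identity
\[\alpha^{(\gamma)} = \{x \in \alpha : \operatorname{CB}(x) \geq \gamma\},\]
from which the lemma follows at once: the set $\{\gamma : x \in \alpha^{(\gamma)}\}$ is then the initial segment of ordinals at most $\operatorname{CB}(x)$, whose maximum is $\operatorname{CB}(x)$ itself.

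The base case $\gamma = 0$ and the limit case (where $\alpha^{(\gamma)} = \bigcap_{\delta < \gamma} \alpha^{(\delta)}$) follow directly from the definitions and the induction hypothesis. The real work is in the successor step. First I would use the hypothesis to describe $\alpha^{(\gamma)}$ explicitly: an ordinal $y < \alpha$ has all Cantor normal form exponents at least $\gamma$ if and only if $y = 0$ or $y = \omega^\gamma \cdot \xi$ for some $\xi \geq 1$, so $\alpha^{(\gamma)}$ is enumerated in order by the ordinals $\omega^\gamma \cdot \xi$, with $\xi$ ranging over an initial segment of the ordinals. I would then identify the isolated points of $\alpha^{(\gamma)}$ in the subspace topology by cases on $\xi$: if $\xi = 0$ then $\{0\} = [0, \omega^\gamma) \cap \alpha^{(\gamma)}$; if $\xi = \eta + 1$ is a successor then $(\omega^\gamma \cdot \eta, \omega^\gamma \cdot \xi + 1) \cap \alpha^{(\gamma)} = \{\omega^\gamma \cdot \xi\}$; and if $\xi$ is a non-zero limit then a cofinal sequence $(\xi_k)$ in $\xi$ together with left-continuity of ordinal multiplication yields $\omega^\gamma \cdot \xi_k \to \omega^\gamma \cdot \xi$ from below, so $\omega^\gamma \cdot \xi$ is a limit point and not isolated.

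To close the induction I would observe, by a brief look at Cantor normal forms, that $\operatorname{CB}(\omega^\gamma \cdot \xi) \geq \gamma + 1$ if and only if $\xi$ is a non-zero limit: when $\xi = \eta + 1$ the last term of $\omega^\gamma \cdot \xi$ is exactly $\omega^\gamma$, whereas when $\xi$ is a non-zero limit its own Cantor normal form has smallest exponent at least $1$, so left-multiplication by $\omega^\gamma$ raises every exponent above $\gamma$. Matching this description with the isolation analysis gives $\alpha^{(\gamma+1)} = \{x \in \alpha : \operatorname{CB}(x) \geq \gamma + 1\}$, completing the induction.

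I do not anticipate any serious obstacle. The only technical point worth care is the subspace topology bookkeeping in the isolation analysis, in particular verifying that the candidate open intervals in the successor subcases really do meet $\alpha^{(\gamma)}$ only at the intended point; but this is immediate once $\alpha^{(\gamma)}$ has been enumerated as above.
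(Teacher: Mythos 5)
Your proof is correct, and it fills in what the paper leaves as a remark, but via a slightly different induction: the paper suggests inducting on $\operatorname{CB}(x)$, the rank of the point, whereas you do transfinite induction on the derivative index $\gamma$ and prove the stronger global identity $\alpha^{(\gamma)}=\{x\in\alpha:\operatorname{CB}(x)\geq\gamma\}$. The two are routine reorganisations of the same computation, but your version has the advantage of giving the explicit description of every derived set $\alpha^{(\gamma)}$ (the non-zero multiples of $\omega^{\gamma}$ below $\alpha$), which is essentially what the paper uses implicitly in later rank computations such as $\left|\left(\omega^{\gamma}\cdot m+1\right)^{(\gamma)}\right|=m$; the pointwise induction on $\operatorname{CB}(x)$ is marginally lighter if one only wants the stated lemma. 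One small point of bookkeeping: your explicit description of $\alpha^{(\gamma)}$ via ``all Cantor normal form exponents at least $\gamma$'' admits $y=0$ vacuously, but under the paper's convention $\operatorname{CB}(0)=0$ the induction hypothesis gives $0\notin\alpha^{(\gamma)}$ for $\gamma\geq 1$ (as it must be, since $0$ is isolated in $\alpha$); this is harmless for your successor step, because $0$ is isolated in the subspace either way and so never survives to the next derivative, but the case $\xi=0$ in your isolation analysis is only genuinely present when $\gamma=0$. With that caveat noted, the successor-step analysis (isolated points exactly at successor indices $\xi$, limit points at limit $\xi$, matched against $\operatorname{CB}(\omega^{\gamma}\cdot\xi)\geq\gamma+1$ iff $\xi$ is a non-zero limit) and the limit case are all sound.
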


In other words, the Cantor--Bendixson rank of an ordinal is the ordinal number of iterated derivatives required to make that point ``disappear''.

\begin{proof}
This is straightforward to prove by induction on the Cantor--Bendixson rank of $x$.
\end{proof}

\subsection{Biembeddability of ordinals}

The notion of biembeddability is a weakening of the notion of homeomorphism that is useful for simplifying the calculation of topological pigeonhole numbers.

\begin{definition}
Let $X$ and $Y$ be topological spaces. We say that $X$ and $Y$ are \emph{biembeddable}, and write $X\approxeq Y$, if and only if $X$ is homeomorphic to a subspace of $Y$ and $Y$ is homeomorphic to a subspace of $X$.
\end{definition}

Clearly $\approxeq$ is an equivalence relation. Its relevance is given by the following easy result.

\begin{lemma}\label{pseudorelevant}
If $Y\approxeq\widetilde Y$ and $X_i\approxeq\widetilde X_i$ for all $i\in\kappa$, then $Y\rightarrow\left(top\,X_i\right)^n_{i\in\kappa}$ if and only if $\widetilde Y\rightarrow\left(top\,\widetilde X_i\right)^n_{i\in\kappa}$.\qed
\end{lemma}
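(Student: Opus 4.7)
The plan is to observe that biembeddability is symmetric in the relevant way, so by swapping the roles of the spaces it suffices to prove one implication: assuming $Y\rightarrow\left(top\,X_i\right)^n_{i\in\kappa}$, deduce $\widetilde Y\rightarrow\left(top\,\widetilde X_i\right)^n_{i\in\kappa}$. Fix embeddings $\varphi\colon Y\hookrightarrow\widetilde Y$ witnessing one direction of $Y\approxeq\widetilde Y$, and for each $i\in\kappa$ an embedding $\psi_i\colon\widetilde X_i\hookrightarrow X_i$ witnessing one direction of $X_i\approxeq\widetilde X_i$.

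Given any colouring $\widetilde c\colon[\widetilde Y]^n\to\kappa$, I would pull it back along $\varphi$ to obtain a colouring $c\colon[Y]^n\to\kappa$ by $c(S)=\widetilde c(\varphi[S])$ (note $\varphi$ is injective, so $\varphi[S]\in[\widetilde Y]^n$). Applying the assumed relation to $c$ yields some $i\in\kappa$ and some subspace $X\subseteq Y$ with $X\cong X_i$ and $[X]^n\subseteq c^{-1}(\{i\})$.

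Now I would transport back: $\varphi[X]\subseteq\widetilde Y$ is homeomorphic to $X$, hence to $X_i$, and by construction every $n$-element subset of $\varphi[X]$ is coloured $i$ under $\widetilde c$. Composing the homeomorphism $\varphi[X]\cong X_i$ with $\psi_i$ gives an embedding of $\widetilde X_i$ into $\varphi[X]$; its image $\widetilde X\subseteq\widetilde Y$ satisfies $\widetilde X\cong\widetilde X_i$ and $[\widetilde X]^n\subseteq[\varphi[X]]^n\subseteq\widetilde c^{-1}(\{i\})$, which is exactly what is needed.

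There is no real obstacle here; the only thing worth double-checking is that the two directions of each biembeddability are used in the correct places — the embedding $Y\hookrightarrow\widetilde Y$ is needed to \emph{pull back} the colouring, while the embeddings $\widetilde X_i\hookrightarrow X_i$ are needed to \emph{extract} the desired homeomorphic copy from the monochromatic set produced by the hypothesis. The converse direction then follows immediately by interchanging $Y$ with $\widetilde Y$ and $X_i$ with $\widetilde X_i$ throughout, since $\approxeq$ is symmetric.
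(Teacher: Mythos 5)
Your argument is correct, and it is exactly the standard pull-back-the-colouring-and-transport argument that the paper has in mind when it states this as an easy result with the proof omitted (the \qed in the statement). The only points needing care — that $\varphi[S]$ has size $n$ by injectivity, that $\varphi[X]$ carries the right subspace topology, and that both directions of each biembeddability get used in the right places — are all handled properly in your write-up.
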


We will now classify the ordinals up to biembeddability, beginning with a positive result.

\begin{lemma}\label{pseudopositive}
Let $\gamma$, $m$ and $\delta$ be non-zero ordinals with $m\in\omega$ and $\delta<\omega^\gamma$. Then
\[\omega^\gamma\cdot m+1\approxeq\omega^\gamma\cdot m+\delta.\]
\end{lemma}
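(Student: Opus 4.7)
The plan is to establish biembeddability by exhibiting explicit subspace embeddings in each direction. Since $\delta\geq 1$, the embedding of $\omega^\gamma\cdot m+1$ into $\omega^\gamma\cdot m+\delta$ is immediate: the former appears as the downward-closed (hence convex) subset $[0,\omega^\gamma\cdot m]$ of the latter, and on such a subset the subspace topology coincides with the order topology.

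For the reverse embedding, I would first decompose $\omega^\gamma\cdot m+\delta$ topologically. The case $\delta=1$ is trivial, so assume $\delta\geq 2$. I would verify that the initial segment $[0,\omega^\gamma\cdot m]$ is clopen in $\omega^\gamma\cdot m+\delta$: it is automatically closed, and it is open because the point $\omega^\gamma\cdot m$ has the basic open neighbourhood $(\alpha,\omega^\gamma\cdot m+1)$ contained in it. This gives a topological decomposition $\omega^\gamma\cdot m+\delta\cong(\omega^\gamma\cdot m+1)\sqcup\eta$, where $\eta$ is the ordinal determined by $\omega^\gamma\cdot m+1+\eta=\omega^\gamma\cdot m+\delta$; from $\delta<\omega^\gamma$ one checks that $\eta<\omega^\gamma$ as well.

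I would then realise this disjoint union as a subspace of $\omega^\gamma\cdot m+1$ by taking $B=[0,\eta)$ and $A=[\eta+1,\omega^\gamma\cdot m]$. The subspace $B$ is downward-closed of order type $\eta$, so $B\cong\eta$. The subspace $A$ is convex, and its order type $\zeta$ satisfies $\eta+1+\zeta=\omega^\gamma\cdot m+1$; applying the additive indecomposability of $\omega^\gamma$ (that $\alpha+\omega^\gamma=\omega^\gamma$ whenever $\alpha<\omega^\gamma$) to the computation $\eta+1+\omega^\gamma\cdot m=\omega^\gamma\cdot m$, I find $\zeta=\omega^\gamma\cdot m+1$, so $A\cong\omega^\gamma\cdot m+1$. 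Finally, both $A$ and $B$ are open as intervals in the ambient $\omega^\gamma\cdot m+1$ and together partition $A\cup B$, hence are clopen in $A\cup B$; the subspace $A\cup B$ is therefore the topological disjoint union $A\sqcup B\cong(\omega^\gamma\cdot m+1)\sqcup\eta\cong\omega^\gamma\cdot m+\delta$, giving the desired embedding.

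The main subtlety will be the careful use of additive indecomposability to identify the order type of $A$, and keeping the topological and order-theoretic content properly separated (subspace topology on convex subsets, clopen partitions realising topological disjoint unions); the other verifications are routine.
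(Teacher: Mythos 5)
Your proof is correct and is essentially the paper's argument: both proofs split the spaces into clopen pieces at a successor point and move the small piece ($\delta$, or your $\eta$) to the bottom, where it is absorbed by additive indecomposability of $\omega^\gamma$. The paper merely packages this as ``a sum of two successor ordinals is homeomorphic to their topological disjoint union, hence commutes up to homeomorphism,'' proving $\omega^\gamma\cdot m+1+\delta+1\cong\delta+1+\omega^\gamma\cdot m+1=\omega^\gamma\cdot m+1$, whereas you carry out the same clopen decomposition explicitly.
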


\begin{proof}
Clearly $\omega^\gamma\cdot m+1$ is homeomorphic to a subspace of $\omega^\gamma\cdot m+\delta$, so it is enough to show that $\omega^\gamma\cdot m+\delta$ is homeomorphic to a subspace of $\omega^\gamma\cdot m+1$. In fact, we show that $\omega^\gamma\cdot m+1+\delta+1$ is homeomorphic to $\omega^\gamma\cdot m+1$, which is sufficient.

Now if $\alpha$ and $\beta$ are successor ordinals, then $\alpha+\beta$ is homeomorphic to the topological disjoint union of $\alpha$ and $\beta$, and thus $\alpha+\beta\cong\beta+\alpha$. Hence $\omega^\gamma\cdot m+1+\delta+1\cong\delta+1+\omega^\gamma\cdot m+1=\omega^\gamma\cdot m+1$ since $\delta<\omega^\gamma$.
\end{proof}

Lemmas \ref{pseudorelevant} and \ref{pseudopositive} have the following immediate consequence, which will be useful.

\begin{proposition}\label{pseudosummary}
Let $\kappa$ be a cardinal and let $\alpha_i$ be an ordinal for each $i\in\kappa$. Suppose that for some non-zero ordinal $\gamma$ and some $m\in\omega\setminus\left\{0\right\}$ we have
\[\omega^\gamma\cdot m+1\leq P^{top}\left(\alpha_i\right)_{i\in\kappa}<\omega^\gamma\cdot\left(m+1\right).\] Then in fact
\[P^{top}\left(\alpha_i\right)_{i\in\kappa}=\omega^\gamma\cdot m+1.\pushQED{\qed}\qedhere\popQED\]
\end{proposition}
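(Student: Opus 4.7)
The plan is to use Lemmas \ref{pseudorelevant} and \ref{pseudopositive} directly. Write $\beta = P^{top}\left(\alpha_i\right)_{i\in\kappa}$. From the assumption $\omega^\gamma\cdot m+1\le\beta<\omega^\gamma\cdot\left(m+1\right)=\omega^\gamma\cdot m+\omega^\gamma$, we may decompose $\beta=\omega^\gamma\cdot m+\delta$ for a unique ordinal $\delta$ with $1\le\delta<\omega^\gamma$.

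Applying Lemma \ref{pseudopositive} to this $\delta$ (which is non-zero, and in particular is an ordinal with $\delta<\omega^\gamma$), we get
\[\omega^\gamma\cdot m+1\approxeq\omega^\gamma\cdot m+\delta=\beta.\]
Next, since $\beta\to\left(top\,\alpha_i\right)^1_{i\in\kappa}$ by the definition of $\beta$, Lemma \ref{pseudorelevant} (taking $\widetilde Y=\omega^\gamma\cdot m+1$, $Y=\beta$, and $\widetilde X_i=X_i=\alpha_i$) yields
\[\omega^\gamma\cdot m+1\rightarrow\left(top\,\alpha_i\right)^1_{i\in\kappa}.\]
By minimality in the definition of $P^{top}$, this forces $\beta\le\omega^\gamma\cdot m+1$, and combined with the hypothesised lower bound we conclude $\beta=\omega^\gamma\cdot m+1$.

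There is no real obstacle here: the proposition is essentially a bookkeeping corollary packaging the two preceding lemmas into a form that will be convenient for later use, saying that once the pigeonhole number is known to fall in an interval $\left[\omega^\gamma\cdot m+1,\omega^\gamma\cdot(m+1)\right)$, the biembeddability machinery automatically collapses it to the left endpoint.
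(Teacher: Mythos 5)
Your proof is correct and is exactly the argument the paper intends: the proposition is stated as an immediate consequence of Lemmas \ref{pseudorelevant} and \ref{pseudopositive}, obtained by writing $P^{top}\left(\alpha_i\right)_{i\in\kappa}=\omega^\gamma\cdot m+\delta$ with $1\leq\delta<\omega^\gamma$, invoking biembeddability with $\omega^\gamma\cdot m+1$, and using minimality of the pigeonhole number. No gaps, and no difference in approach worth noting.
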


We now show that Lemma \ref{pseudopositive} is best possible. The proof makes use of Cantor--Bendixson derivatives.

\begin{proposition}\label{pseudonegative}
Let $\gamma$, $m$ and $\delta$ be non-zero ordinals with $m\in\omega$ and $\delta<\omega^\gamma$. Then
\begin{enumerate}
\item
$\omega^\gamma\cdot m+1\not\approxeq\omega^\gamma\cdot m$;
\item\label{pseudolimitcase}
$\omega^\gamma\not\approxeq\delta$; and
\item\label{pseudodifficultcase}
$\omega^\gamma\cdot\left(m+1\right)\not\approxeq\omega^\gamma\cdot m+1$.
\end{enumerate}
\end{proposition}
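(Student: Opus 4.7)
The plan is to prove each of the three non-biembeddability claims by ruling out one direction of embedding via cardinality comparisons of Cantor--Bendixson derivatives, using throughout the elementary fact that if $X$ embeds as a subspace of $Z$ then for each ordinal $\delta$ the intrinsic derivative $X^{(\delta)}$ embeds into $Z^{(\delta)}$. For (1), the inclusion $\omega^\gamma\cdot m\hookrightarrow\omega^\gamma\cdot m+1$ is obvious; to rule out the reverse I would apply Lemma \ref{cantorbendixson} to get $(\omega^\gamma\cdot m+1)^{(\gamma)}=\{\omega^\gamma\cdot k:1\leq k\leq m\}$, which has $m$ points, while $(\omega^\gamma\cdot m)^{(\gamma)}=\{\omega^\gamma\cdot k:1\leq k\leq m-1\}$ has only $m-1$, blocking the embedding. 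For (2), the inclusion $\delta\hookrightarrow\omega^\gamma$ is obvious; to rule out $\omega^\gamma\hookrightarrow\delta$, write the Cantor normal form $\delta=\omega^{\gamma_1}m_1+\cdots+\omega^{\gamma_n}m_n$ with $\gamma_1<\gamma$, and compare $\gamma_1$-th derivatives: $\delta^{(\gamma_1)}$ has at most $m_1$ elements (hence is finite), while $(\omega^\gamma)^{(\gamma_1)}=\{\omega^{\gamma_1}\cdot q:1\leq q<\omega^{\gamma-\gamma_1}\}$ is infinite since $\omega^{\gamma-\gamma_1}\geq\omega$.

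Part (3) is the difficult case. Suppose for contradiction that $Y\subseteq\omega^\gamma\cdot m+1$ with $Y\cong\omega^\gamma\cdot(m+1)$. Since $\omega^\gamma\cdot(m+1)$ is a limit ordinal, $Y$ is non-compact, so it is not closed in the compact ambient space and $\overline{Y}\setminus Y\neq\emptyset$. As $Y$ is a locally compact dense subspace of the compact Hausdorff space $\overline{Y}$, it is open in $\overline{Y}$, so $\overline{Y}$ is a Hausdorff compactification of $Y$. Let $Y^+\cong\omega^\gamma\cdot(m+1)+1$ be the one-point compactification of $Y$, and let $\pi:\overline{Y}\to Y^+$ be the canonical map that is the identity on $Y$ and collapses $\overline{Y}\setminus Y$ to the added point; being continuous between compact Hausdorff spaces, $\pi$ is closed.

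The key technical step is a lemma: for any closed continuous surjection $\pi:A\to B$ of compact Hausdorff spaces and any $b\in B^{(\delta)}$, the fibre $\pi^{-1}(b)$ meets $A^{(\delta)}$. I would prove this by transfinite induction on $\delta$, extracting a convergent subnet from the compact set $A^{(\delta')}$ at the successor step and from the compact fibre $\pi^{-1}(b)$ at the limit step (indexing by $\epsilon<\delta$). Applied with $\delta=\gamma$, each of the $m+1$ points of $(Y^+)^{(\gamma)}=\{\omega^\gamma\cdot k:1\leq k\leq m+1\}$ contributes a preimage in $\overline{Y}^{(\gamma)}$, and these lie in disjoint fibres so are distinct; hence $|\overline{Y}^{(\gamma)}|\geq m+1$, contradicting $\overline{Y}^{(\gamma)}\subseteq(\omega^\gamma\cdot m+1)^{(\gamma)}$, which has only $m$ elements. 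The main obstacle is the limit case of the key lemma, where one must carefully index a net by the ordinals below $\delta$ and use compactness of the fibre to extract a convergent subnet while passing the derivative conditions to the limit.
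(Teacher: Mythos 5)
Your proposal is correct. Parts (1) and (2) are essentially the paper's own argument: a cardinality (or finite versus infinite) comparison of iterated derivatives via Lemma \ref{cantorbendixson}, with your part (2) using the leading Cantor normal form exponent $\gamma_1$ of $\delta$ in place of the paper's split into the cases $\gamma$ successor and $\gamma$ limit. Part (3), the crux, is genuinely different. The paper stays inside ordinal combinatorics: it observes that $\omega^\gamma\cdot(m+1)$ contains a \emph{closed} copy of $\omega^\gamma$ disjoint from its $\gamma$-th derivative, and shows no subspace $Y\subseteq\omega^\gamma\cdot m+1$ with $\left|Y^{\left(\gamma\right)}\right|=m$ can have such a subset, because $Y^{\left(\gamma\right)}$ is forced to equal $\left\{\omega^\gamma,\dots,\omega^\gamma\cdot m\right\}$ and closedness then chops the candidate copy into finitely many pieces each bounded below $\omega^\gamma$, so the part (2) argument applies. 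You instead take the closure $\overline{Y}$, note it is a Hausdorff compactification of the locally compact non-compact $Y\cong\omega^\gamma\cdot\left(m+1\right)$, map it onto the one-point compactification $Y^+\cong\omega^\gamma\cdot\left(m+1\right)+1$, and invoke a fibre-lifting lemma for derivatives to force $\left|\overline{Y}^{\left(\gamma\right)}\right|\geq m+1$, contradicting $\overline{Y}^{\left(\gamma\right)}\subseteq\left(\omega^\gamma\cdot m+1\right)^{\left(\gamma\right)}$. Your key lemma is true and your induction goes through as sketched; in fact closedness of $\pi$ is not needed: continuity, compactness of the domain and Hausdorffness of the codomain suffice, since at a successor stage one takes a net in $B^{\left(\delta'\right)}\setminus\left\{b\right\}$ converging to $b$, lifts it into the compact set $A^{\left(\delta'\right)}$ and extracts a convergent subnet whose limit maps to $b$ and is a limit of points distinct from it, while at a limit stage the nested non-empty closed sets $\pi^{-1}\left(b\right)\cap A^{\left(\epsilon\right)}$ for $\epsilon<\delta$ have the finite intersection property, so no subnet extraction is required there. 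What the two routes buy: yours isolates a general, reusable fact (continuous surjections of compact Hausdorff spaces cannot lower Cantor--Bendixson rank, applied via the minimality of the one-point compactification), at the cost of nets and compactification machinery; the paper's argument is more elementary and self-contained, recycling only Lemma \ref{cantorbendixson} and its own part (2), which fits the style of the rest of the paper.
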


\begin{proof}
\begin{enumerate}
\item
By Lemma \ref{cantorbendixson}, we have $\left|\left(\omega^\gamma\cdot m+1\right)^{\left(\gamma\right)}\right|=m$ while $\left|\left(\omega^\gamma\cdot m\right)^{\left(\gamma\right)}\right|=m-1$. Therefore no subspace of $\omega^\gamma\cdot m$ can be homeomorphic to $\omega^\gamma\cdot m+1$.
\item\label{pseudolimitcaseproof}
If $\gamma=\eta+1$, then by Lemma \ref{cantorbendixson}, $\left(\omega^\gamma\right)^{\left(\eta\right)}$ is infinite while $\delta^{\left(\eta\right)}$ is finite (or empty). If $\gamma$ is a limit ordinal, then by Lemma \ref{cantorbendixson}, $\left(\omega^\gamma\right)^{\left(\eta\right)}\neq\emptyset$ for all $\eta<\gamma$ while $\delta^{\left(\eta\right)}=\emptyset$ for some $\eta<\gamma$. In either case no subspace of $\delta$ can be homeomorphic to $\omega^\gamma$.
\item
Let $X=\omega^\gamma\cdot\left(m+1\right)$. By Lemma \ref{cantorbendixson}, $X$ has the following two properties: firstly, $\left|X^{\left(\gamma\right)}\right|=m$; and secondly, $X$ has a closed subset $Z$ with $Z\cap X^{\left(\gamma\right)}=\emptyset$ and $Z\cong\omega^\gamma$, namely $Z=\left[\omega^\gamma\cdot m+1,\omega^\gamma\cdot\left(m+1\right)\right)$.

Suppose then that $Y$ is a subspace of $\omega^\gamma\cdot m+1$ with $\left|Y^{\left(\gamma\right)}\right|=m$, and that $W$ is a closed subset of $Y$ with $W\cap Y^{\left(\gamma\right)}=\emptyset$. We show that $W\ncong\omega^\gamma$, which suffices. Since $\left|Y^{\left(\gamma\right)}\right|=m=\left|\left(\omega^\gamma\cdot m+1\right)^{\left(\gamma\right)}\right|$, we must have $Y^{\left(\gamma\right)}=\left(\omega^\gamma\cdot m+1\right)^{\left(\gamma\right)}=\left\{\omega^\gamma,\omega^\gamma\cdot 2,\dots,\omega^\gamma\cdot m\right\}$. Therefore since $W$ is closed, for each $i\in\left\{0,\dots,m-1\right\}$ there exists $x_i\in\left[\omega^\gamma\cdot i+1,\omega^\gamma\cdot\left(i+1\right)\right)$ such that $W\cap\left(x_i,\omega^\gamma\cdot\left(i+1\right)\right)=\emptyset$. It follows that $W$ is homeomorphic to the disjoint union of a finite number of subspaces of $\zeta$ for some $\zeta<\omega^\gamma$. The argument of part \ref{pseudolimitcaseproof} then shows that $W\ncong\omega^\gamma$.\qedhere
\end{enumerate}
\end{proof}

Although we will not make use of the following result, it is of interest nonetheless. It is an immediate consequence of Lemma \ref{pseudopositive} and Proposition \ref{pseudonegative}.

\begin{corollary}[Classification of ordinals up to biembeddability]
Two ordinals $\alpha\leq\beta$ are biembeddable if and only if either $\alpha=\beta=\omega^\gamma\cdot m$ for some ordinal $\gamma$ and some $m\in\omega$, or $\omega^\gamma\cdot m+1\leq\alpha\leq\beta<\omega^\gamma\cdot\left(m+1\right)$ for some non-zero ordinal $\gamma$ and some $m\in\omega\setminus\left\{0\right\}$.\qed
\end{corollary}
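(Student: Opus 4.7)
The plan is to combine Lemma \ref{pseudopositive} (to produce biembeddabilities) with Proposition \ref{pseudonegative} and a direct count of Cantor--Bendixson derivatives via Lemma \ref{cantorbendixson} (to rule them out).

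For the forward direction, the case $\alpha=\beta$ is trivial; otherwise $\omega^\gamma\cdot m + 1 \le \alpha \le \beta < \omega^\gamma\cdot(m+1)$, so writing $\alpha=\omega^\gamma\cdot m+\delta_1$ and $\beta=\omega^\gamma\cdot m+\delta_2$ with $0<\delta_1,\delta_2<\omega^\gamma$, two applications of Lemma \ref{pseudopositive} together with transitivity of $\approxeq$ give $\alpha\approxeq\omega^\gamma\cdot m+1\approxeq\beta$.

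For the converse, assume $\alpha\le\beta$ are biembeddable and write $\beta=\omega^\gamma\cdot m+\delta$ with $\gamma$ the leading exponent of its Cantor normal form, $m\ge 1$, and $\delta<\omega^\gamma$. If $\delta>0$, then $\gamma\ge 1$ and $\omega^\gamma\cdot m+1\le\beta<\omega^\gamma\cdot(m+1)$, and I would show $\alpha\ge\omega^\gamma\cdot m+1$: supposing $\alpha\le\omega^\gamma\cdot m$, Lemma \ref{pseudopositive} gives $\beta\approxeq\omega^\gamma\cdot m+1$, which is forbidden when $\alpha=\omega^\gamma\cdot m$ by Proposition \ref{pseudonegative}(1) and when $\alpha<\omega^\gamma\cdot m$ by the inequality $|\alpha^{(\gamma)}|\le m-1<m=|(\omega^\gamma\cdot m+1)^{(\gamma)}|$ deduced from Lemma \ref{cantorbendixson} (since biembeddability preserves the cardinality of $(\cdot)^{(\gamma)}$ when it is finite).

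If $\delta=0$, so $\beta=\omega^\gamma\cdot m$, I would show $\alpha=\beta$. The case $\gamma=0$ reduces to the observation that distinct finite ordinals are not homeomorphic, and the case $\gamma\ge 1$, $m=1$ is immediate from Proposition \ref{pseudonegative}(\ref{pseudolimitcase}). For $\gamma\ge 1$, $m\ge 2$, and $\alpha<\omega^\gamma\cdot m$, split into the subcases $\alpha\in(\omega^\gamma\cdot(m-1),\omega^\gamma\cdot m)$, where Lemma \ref{pseudopositive} gives $\alpha\approxeq\omega^\gamma\cdot(m-1)+1$ and Proposition \ref{pseudonegative}(\ref{pseudodifficultcase}) supplies the contradiction, and $\alpha\le\omega^\gamma\cdot(m-1)$, where Lemma \ref{cantorbendixson} gives $|\alpha^{(\gamma)}|\le m-2<m-1=|\beta^{(\gamma)}|$. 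The chief obstacle is the boundary case between $\omega^\gamma\cdot m$ and the largest interval just below it, which is precisely what Proposition \ref{pseudonegative}(\ref{pseudodifficultcase}) is designed to resolve.
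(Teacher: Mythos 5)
Your proof is correct and follows exactly the route the paper intends: the paper leaves the corollary as an immediate consequence of Lemma \ref{pseudopositive} (for the positive direction) and Proposition \ref{pseudonegative} (for the negative direction), which are precisely the tools you use. Your only addition is the direct Cantor--Bendixson count via Lemma \ref{cantorbendixson} to handle comparisons between non-adjacent classes, which is a legitimate way of filling in the detail the paper treats as immediate (one could equally sandwich such cases between adjacent ones and quote Proposition \ref{pseudonegative} directly).
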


\subsection{The order-homeomorphism relation}

It will sometimes be necessary to make use of the order structure of a homeomorphic copy of an ordinal, for which the notion of order-homeomorphism will be useful.

\begin{definition}
Let $X$ and $Y$ be sets of ordinals. We say that $X$ and $Y$ are \emph{order-homeomorphic} if and only if there is a order-preserving homeomorphism from $X$ to $Y$.
\end{definition}

Note that a set of ordinals is always assumed to have the subspace topology induced from the order topology on any ordinal containing it, which need not coincide with the order topology on the set itself. In fact it is easy to see that these topologies coincide if and only if $X$ is order-homeomorphic to its order type.

The following simple result provides us with an equivalent criterion for this scenario in terms of closed sets. The proof is left as an exercise.

\begin{proposition}\label{ordertypehomeomorphic}
Let $X$ be a set of ordinals, and let $\eta$ be the least ordinal with $X\subseteq\eta$. Then $X$ is a closed subset of $\eta$ if and only if $X$ is order-homeomorphic to its order type.
\end{proposition}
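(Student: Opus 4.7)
The plan is to work with the canonical order-preserving bijection $f:\alpha\to X$, where $\alpha$ is the order type of $X$, and to show that $X$ being closed in $\eta$ is equivalent to $f$ being a homeomorphism (with $\alpha$ carrying the order topology and $X$ the subspace topology inherited from $\eta$). Since $f$ is an order-isomorphism, the latter is precisely the statement that $X$ is order-homeomorphic to its order type.

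I would first show that $f$ is always an open map, so that $f^{-1}$ is automatically continuous. The basic open sets of $\alpha$ are $(a,b)$, $[0,b)$ and $(a,\alpha)$, and order-preservation together with bijectivity gives
\[f\bigl((a,b)\bigr)=X\cap(f(a),f(b)),\quad f\bigl([0,b)\bigr)=X\cap[0,f(b)),\quad f\bigl((a,\alpha)\bigr)=X\cap(f(a),\eta),\]
each of which is open in the subspace $X$. It then only remains to examine continuity of $f$. At $0$ and at any successor $\beta<\alpha$ the singleton $\{\beta\}$ is already open in $\alpha$, so continuity of $f$ there is automatic. Thus the whole question reduces to the condition that for every limit ordinal $\beta<\alpha$,
\[f(\beta)=\sup\{f(\gamma):\gamma<\beta\}.\]

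I would then show that this condition, holding for all limit $\beta<\alpha$, is equivalent to $X$ being closed in $\eta$. For the forward implication, assume $X$ is closed and fix a limit $\beta<\alpha$. Setting $\xi:=\sup\{f(\gamma):\gamma<\beta\}$, the set $\{f(\gamma):\gamma<\beta\}\subseteq X$ is cofinal in $\xi$ with no maximum, so $\xi$ is a limit ordinal and a limit point of $X$. Since $\xi\leq f(\beta)<\eta$, closedness gives $\xi\in X$, say $\xi=f(\delta)$. If $\delta<\beta$ then $f(\delta+1)$ would exceed $\xi$ and yet lie in $\{f(\gamma):\gamma<\beta\}$, contradicting the definition of $\xi$; hence $\delta=\beta$, i.e.\ $f(\beta)=\xi$. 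For the reverse implication, suppose $\xi\in\eta$ is a limit point of $X$ that does not lie in $X$, and let $\beta$ be the order type of the initial segment $X\cap\xi$ of $X$. Since $\sup(X\cap\xi)=\xi\notin X$, the set $X\cap\xi$ has no maximum, so $\beta$ is a limit ordinal and $\sup\{f(\gamma):\gamma<\beta\}=\xi$. The case $\beta=\alpha$ would force $X\cap\xi=X$, so $X$ itself would have no maximum and $\eta=\sup X=\xi$, contradicting $\xi\in\eta$; hence $\beta<\alpha$, and the displayed condition would give $\xi=f(\beta)\in X$, contrary to assumption.

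The only real obstacle is bookkeeping: one must keep the zero, successor and limit cases in $\alpha$ straight, and take care of the edge case in which $\alpha$ is itself a limit ordinal (so that $X$ has no maximum and $\eta=\sup X$). Once these cases are cleanly separated, each of the verifications above is immediate from the order-preserving bijectivity of $f$ and the definition of the order topology.
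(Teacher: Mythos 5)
Your argument is correct. The paper itself leaves this proposition as an exercise, so there is no official proof to compare against; your route --- reducing the question to continuity of the canonical order isomorphism $f\colon\alpha\to X$ (openness being automatic, and continuity being an issue only at limit ordinals, where it amounts to $f(\beta)=\sup\{f(\gamma):\gamma<\beta\}$), and then showing that this condition holds for all limit $\beta<\alpha$ exactly when $X$ is closed in $\eta$ --- is the natural one, and all the steps check out, including the treatment of the edge case $\beta=\alpha$ via the minimality of $\eta$. Two small points are worth making explicit in a write-up: in the reverse implication, the equality $\sup(X\cap\xi)=\xi$ rests on the fact that a point of an ordinal can only be approached from below (the neighbourhood $(u,\xi+1)$ of $\xi$ meets $X\setminus\{\xi\}$ only in points below $\xi$, so $X\cap\xi$ is cofinal in $\xi$); and in the forward implication, besides excluding $\delta<\beta$ you should record why $\delta\leq\beta$, which is immediate from $f(\delta)=\xi\leq f(\beta)$ and order-preservation. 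Neither point affects the correctness of the argument.
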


At one point, we will be able to make use of the notion of order-homeomorphism because of the following property held by certain ordinals.

\begin{definition}
Let $\alpha$ be an ordinal. We say that $\alpha$ is \emph{order-reinforcing} if and only if, whenever $X$ is a set of ordinals with $X\cong\alpha$, there is a subset $Y\subseteq X$ such that $Y$ is order-homeomorphic to $\alpha$.
\end{definition}

Baumgartner \cite[Theorem 0.2]{baumgartner} showed that every countable ordinal of the form $\omega^\gamma+1$ or $\omega^\gamma$ is order-reinforcing. We now extend this result.

\begin{theorem}\label{orderreinforcing}
Let $\gamma$ be a non-zero ordinal and let $m\in\omega\setminus\left\{0\right\}$. Then
\begin{enumerate}
\item\label{orderreinforcingsuccessor}
$\omega^\gamma\cdot m+1$ is order-reinforcing; and
\item
$\omega^\gamma$ is order-reinforcing.
\end{enumerate}
\end{theorem}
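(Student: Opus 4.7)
The plan is to prove both parts simultaneously by transfinite induction on $\gamma$. I would first reduce part (\ref{orderreinforcingsuccessor}) to the case $m = 1$, and then prove the $m = 1$ case and the second part in parallel.

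For the reduction of part (\ref{orderreinforcingsuccessor}), suppose $X$ is a set of ordinals with $X \cong \omega^\gamma \cdot m + 1$, and list the $m$ topologically rank-$\gamma$ points of $X$ (i.e.\ the elements of $X^{(\gamma)}$) in ordinal order as $z_1 < z_2 < \dots < z_m$. Put $U_1 := X \cap [0, z_1]$ and $U_i := X \cap (z_{i-1}, z_i]$ for $2 \leq i \leq m$; each $U_i$ is clopen in $X$ (because $[0, z_i]$ is clopen in the ambient ordinal), is compact, and contains $z_i$ as its unique rank-$\gamma$ point, so $U_i \cong \omega^\gamma + 1$ by the classification of compact subspaces of ordinals via Cantor--Bendixson rank and degree. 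Applying the $m = 1$ case separately to each $U_i$ and taking the union produces $Y \subseteq X$ of order type $\omega^\gamma \cdot m + 1$, closed in its ambient ordinal since the $U_i$ occupy disjoint ordinal intervals.

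For the $m = 1$ case of part (\ref{orderreinforcingsuccessor}) I would actually prove the stronger parametrised claim: \emph{whenever $X \cong \omega^\gamma + 1$ with unique topological rank-$\gamma$ point $x^*$ and $a$ is any ordinal less than $x^*$, there exists $Y \subseteq X \cap (a, x^*]$ order-homeomorphic to $\omega^\gamma + 1$ with $\max Y = x^*$}. The parameter $a$ is what allows the inductive calls to produce copies living in prescribed ordinal windows. The base case $\gamma = 1$ is routine: extract an increasing $\omega$-sequence in $X \cap (a, x^*)$ with supremum $x^*$ and adjoin $x^*$, with closedness in $x^* + 1$ immediate from Proposition \ref{ordertypehomeomorphic}. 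In the successor step $\gamma = \beta + 1$, the topologically rank-$\beta$ points of $X$ above $a$ list in ordinal order as an increasing $\omega$-sequence $y_1 < y_2 < \cdots$ with $\sup_n y_n = x^*$, and the subspaces $V_n := X \cap (y_{n-1}, y_n]$ (with $y_0 := a$) are clopen in $X$ with $y_n$ as their unique topological rank-$\beta$ point, hence $V_n \cong \omega^\beta + 1$; the inductive hypothesis inside $V_n$ yields $Z_n \subseteq V_n$ order-homeomorphic to $\omega^\beta + 1$ with $\max Z_n = y_n$, and $Y := \{x^*\} \cup \bigcup_n Z_n$ has order type $(\omega^\beta + 1) \cdot \omega + 1 = \omega^{\beta+1} + 1$ and is closed in $x^* + 1$. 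In the limit step, fix an increasing cofinal sequence $(\delta_\alpha)_{\alpha < \operatorname{cf}(\gamma)}$ in $\gamma$, transfinitely pick $y_\alpha \in X$ of topological rank $\delta_\alpha$ with $y_\alpha > \sup_{\alpha' < \alpha} y_{\alpha'}$ (possible because rank-$\delta_\alpha$ points are topologically cofinal in $X$ and $|\alpha| < \operatorname{cf}(\gamma)$ keeps the partial supremum below $x^*$), and assemble $Y$ via analogous clopen neighborhoods.

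The second part of the theorem is handled in parallel using the same templates but without any adjoined top point: in the successor case one takes an $\omega$-sequence of rank-$(\gamma - 1)$ points cofinal in $X$ and applies part (\ref{orderreinforcingsuccessor}) in a suitable clopen neighborhood of each, while in the limit case one proceeds along a cofinal sequence in $\gamma$ exactly as above. The main obstacle will be the limit step with uncountable cofinality, where one must simultaneously track throughout the transfinite recursion that (a) each $y_\alpha$ has the correct topological rank, (b) the $y_\alpha$ are strictly ordinal-increasing with the desired supremum, and (c) the final assembled $Y$ is closed in its ambient ordinal. A secondary technical difficulty is the classification lemma identifying a compact clopen subspace of $X$ of Cantor--Bendixson rank $\gamma$ and degree $1$ as $\omega^\gamma + 1$, which is what licenses applying the inductive hypothesis inside each $V_n$ or $V_\alpha$.
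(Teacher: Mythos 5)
Your plan for part (2) has a genuine gap exactly where the theorem is hard, namely when $\omega^\gamma$ has uncountable cofinality (the case the paper singles out as not covered by Baumgartner's argument). A recursion of length $\operatorname{cf}(\gamma)>\omega$ must pass through limit stages, and at a limit stage $\alpha$ the supremum $s_\alpha$ of the blocks built so far is a limit point of the assembled set lying strictly below its eventual supremum; by Proposition \ref{ordertypehomeomorphic} the final $Y$ is order-homeomorphic to $\omega^\gamma$ only if it is closed in $\sup Y$, so every such $s_\alpha$ must belong to $Y$, in particular to $X$. But in part (2) the copy $X$ is not compact and need not be closed in its ambient ordinal, so nothing guarantees $s_\alpha\in X$, and you give no mechanism for steering the choices of $y_\alpha$ so that all limit-stage suprema land in $X$. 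Relatedly, your recipe requires, above any previously attained bound, a point of prescribed rank admitting a \emph{compact} clopen neighbourhood $X\cap(b,y_\alpha]$; when $X$ is not compact such neighbourhoods can fail to exist at points approached from below by elements of $\operatorname{cl}(X)\setminus X$, and you do not argue that suitable points occur unboundedly. You flag precisely these issues as ``the main obstacle'', but that obstacle \emph{is} the mathematical content of this case, and the proposal contains no idea for overcoming it. The paper's proof proceeds differently: it takes $x$ minimal in $\operatorname{cl}_{\eta+1}\left(X\right)\setminus X$, so that $Z=X\cap\left[0,x\right)$ is closed in $\left[0,x\right)$ and hence order-homeomorphic to its order type, and then shows --- using that two disjoint closed subsets of $\omega^\gamma$ cannot both be unbounded when the cofinality is uncountable --- that every non-compact clopen subset of $\omega^\gamma$ is homeomorphic to $\omega^\gamma$; applied to $Z$ this forces its order type to be at least $\omega^\gamma$, and an initial segment finishes. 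Some ingredient of this kind is absent from your outline. A smaller inaccuracy: in the successor case of part (2) the rank-$(\gamma-1)$ points of $X$ need not be ordinal-cofinal in $X$ (a copy of $\omega^2$ can have all its limit points below a bound, with a clopen discrete set above), though that case is repairable and is covered by Baumgartner in any event.

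For part (1) your induction would work, but it is far heavier than necessary and conceals the one observation that settles it: $\omega^\gamma\cdot m+1$ is compact, so any copy $X$ is compact, hence closed in the least ordinal containing it, hence order-homeomorphic to its order type by Proposition \ref{ordertypehomeomorphic}; that order type must be at least $\omega^\gamma\cdot m+1$ since $\left|X^{\left(\gamma\right)}\right|=m$, and the initial segment of $X$ of order type $\omega^\gamma\cdot m+1$ is the required $Y$. Indeed your ``secondary technical difficulty'' --- identifying a compact clopen piece of rank $\beta$ and degree $1$ with $\omega^\beta+1$ --- is most easily proved by exactly this compact-closed argument, which makes the surrounding induction redundant; and, as literally written, your limit-stage union in part (1) also omits the suprema $s_\alpha$ (which do lie in $X$ there, by compactness) and so is not closed until they are added.
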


Baumgartner's proof for ordinals of the form $\omega^\gamma+1$ is also valid for uncountable ordinals of this form, and our proof of part \ref{orderreinforcingsuccessor} is almost identical. Baumgartner's proof for ordinals of the form $\omega^\gamma$ is valid for uncountable ordinals of this form providing they have countable cofinality, so we provide a new proof to cover the remaining case.

In the proof, given a topological space $A$ and a subset $B\subseteq A$, we write $\operatorname{cl}_A\left(B\right)$ for the closure of $B$ in $A$.

\begin{proof}
\begin{enumerate}
\item
Let $\alpha=\omega^\gamma\cdot m+1$, let $X$ be a set of ordinals with $X\cong\alpha$, and let $\eta$ be the least ordinal with $X\subseteq\eta$. Then $X$ is compact and therefore a closed subset of the Hausdorff space $\eta$. So by Proposition \ref{ordertypehomeomorphic}, $X$ is order-homeomorphic to its order type. This order type must be at least $\alpha$ in order for $\left|X^{\left(\gamma\right)}\right|=m$. Hence we may take $Y$ to be the initial segment of $X$ of order type $\alpha$.
\item
Let $\alpha=\omega^\gamma$. Baumgartner's proof covers the case in which $\alpha$ has countable cofinality, so assume that $\alpha$ has uncountable cofinality.

Let $X$ be a set of ordinals with $X\cong\alpha$, and let $\eta$ be the least ordinal with $X\subseteq\eta$. Then $X$ is not compact and is therefore not a closed subset of the compact space $\eta+1$. So we may let $x$ be the minimal element of $\operatorname{cl}_{\eta+1}\left(X\right)\setminus X$. Let $Z=X\cap\left[0,x\right)$, so that $Z$ is a closed cofinal subset of $\left[0,x\right)$. Then by Proposition \ref{ordertypehomeomorphic}, $Z$ is order-homeomorphic to its order type, say the ordinal $\beta$. Observe now that $Z$ is a closed open subset of $X$ but is not compact. We claim that any closed open subset of $\alpha$ that is not compact must be homeomorphic to $\alpha$. From this it follows that $\beta\cong\alpha$ and hence $\beta\geq\alpha$ by Proposition \ref{pseudonegative} part \ref{pseudolimitcase}. Hence we may take $Y$ to be the initial segment of $Z$ of order type $\alpha$.

To prove the claim, suppose $W$ is a closed open subset of $\alpha$ that is not compact. Then $W$ and $\alpha\setminus W$ are both closed subsets of $\alpha$, but they are disjoint and so cannot both be club in $\alpha$. Now any closed bounded subset of $\alpha$ is compact, so it must be that $W$ is unbounded in $\alpha$ while $\alpha\setminus W$ is bounded. It follows that $W$ is a closed subset of $\alpha$ of order type $\alpha$, and so $W\cong\alpha$ by Proposition \ref{ordertypehomeomorphic}. This proves the claim, which completes the proof.\qedhere
\end{enumerate}
\end{proof}

Although we will not make use of the fact, it is interesting to note that this result is best possible for infinite ordinals, as we now show.

\begin{corollary}[Classification of order-reinforcing ordinals]
An ordinal $\alpha$ is order-reinforcing if and only if either $\alpha$ is finite, or $\alpha=\omega^\gamma\cdot m+1$ for some non-zero ordinal $\gamma$ and some $m\in\omega\setminus\left\{0\right\}$, or $\alpha=\omega^\gamma$ for some non-zero ordinal $\gamma$.
\end{corollary}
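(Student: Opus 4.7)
The forward direction collects three sub-cases. For finite $\alpha$, any set of ordinals $X$ with $X \cong \alpha$ is itself finite, has order type $|X| = \alpha$, and is closed in its smallest containing ordinal (having no limit points), so by Proposition \ref{ordertypehomeomorphic} $X$ is order-homeomorphic to $\alpha$; take $Y = X$. The two infinite cases are precisely Theorem \ref{orderreinforcing}.

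For the converse, I would write an infinite $\alpha$ not of any of the listed forms in Cantor normal form as $\omega^\gamma \cdot m + \delta$ with $\gamma \geq 1$, $m \geq 1$, and $0 \leq \delta < \omega^\gamma$; since $\alpha \neq \omega^\gamma$ and $\alpha \neq \omega^\gamma \cdot m + 1$, either $\delta \geq 2$ or else $\delta = 0$ and $m \geq 2$. When $\delta \geq 2$, Lemma \ref{pseudopositive} gives $\alpha \approxeq \omega^\gamma \cdot m + 1$, so we obtain a copy $X \cong \alpha$ with $X \subseteq \omega^\gamma \cdot m + 1$; every subset $Y \subseteq X$ then has order type at most $\omega^\gamma \cdot m + 1 < \alpha$ and so cannot be order-homeomorphic to $\alpha$.

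For the remaining case $\delta = 0$, $m \geq 2$, the plan is to construct $X \cong \omega^\gamma \cdot m$ by placing its topological pieces in the ``wrong'' order. Concretely, set $A = [0, \omega^\gamma)$ and $B_i = (\omega^{\gamma+1} \cdot i, \omega^{\gamma+1} \cdot i + \omega^\gamma]$ for $1 \leq i \leq m - 1$, and let $X = A \cup B_1 \cup \cdots \cup B_{m-1}$. Each piece is closed in its smallest containing ordinal, so by Proposition \ref{ordertypehomeomorphic} we have $A \cong \omega^\gamma$ and $B_i \cong \omega^\gamma + 1$; since the pieces are pairwise disjoint and clopen in $X$, the standard topological decomposition of $\omega^\gamma \cdot m$ yields $X \cong \omega^\gamma \sqcup (\omega^\gamma + 1)^{\sqcup(m-1)} \cong \omega^\gamma \cdot m$. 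To show no $Y \subseteq X$ is order-homeomorphic to $\omega^\gamma \cdot m$, I would argue via Cantor--Bendixson rank: by Lemma \ref{cantorbendixson}, $|Y^{(\gamma)}| = |(\omega^\gamma \cdot m)^{(\gamma)}| = m - 1$, while $X^{(\gamma)} = \{\max B_1, \dots, \max B_{m-1}\}$ has the same size, so the inclusion $Y^{(\gamma)} \subseteq X^{(\gamma)}$ forces $\max B_{m-1} \in Y$. Since $\max B_{m-1} = \max X$, this makes $\max B_{m-1}$ the maximum of $Y$ as well, contradicting $\omega^\gamma \cdot m$ being a limit ordinal with no maximum.

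The main obstacle is the case $\delta = 0$, $m \geq 2$. The key insight is that the topology of $\omega^\gamma \cdot m$ is agnostic to the order in which its topological pieces are arranged, and exploiting this flexibility to push the last topological limit point all the way to the top of $X$; the Cantor--Bendixson rank computation then ``traps'' this limit point at $\max X$ and delivers the contradiction.
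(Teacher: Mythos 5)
Your proof is correct and follows essentially the same route as the paper: the ``if'' direction cites Theorem \ref{orderreinforcing} together with the finite case, and the ``only if'' direction splits into the same two cases, with the $\delta\geq 2$ case handled identically via Lemma \ref{pseudopositive}. The only difference is your witness in the $\delta=0$, $m>1$ case --- the paper simply takes $X=\left(\omega^\gamma\cdot m+1\right)\setminus\left\{\omega^\gamma\right\}$ rather than your spread-out union of intervals --- but both rest on the same idea of rearranging the clopen pieces of $\omega^\gamma\cdot m$ so that the maximum of $X$ has Cantor--Bendixson rank $\gamma$, and your write-up supplies the rank-counting verification that the paper leaves implicit.
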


\begin{proof}
The ``if'' statement follows from Theorem \ref{orderreinforcing} and the fact that every finite ordinal is order-reinforcing.

For the ``only if'' statement, if $\alpha$ is infinite then we may write $\alpha=\omega^\gamma\cdot m+\delta$ with $\gamma$ a non-zero ordinal, $m\in\omega\setminus\left\{0\right\}$ and $\delta<\omega^\gamma$. Assume that $\alpha$ does not have one of the given forms, so that either $\delta>1$, or $\delta=0$ and $m>1$. If $\delta>1$, then by Lemma \ref{pseudopositive} we may take $X$ to be a subspace of $\omega^\gamma\cdot m+1$ with $X\cong\alpha$. If $\delta=0$ and $m>1$, then we may take $X=\left(\omega^\gamma\cdot m+1\right)\setminus\left\{\omega^\gamma\right\}$. In either case $X$ is a witness to the fact that $\alpha$ is not order-reinforcing.
\end{proof}

\subsection{The Milner--Rado sum of ordinals}

In our calculation of the topological pigeonhole numbers of finite sequences of countable ordinals, we will make use of two different binary operations on ordinals: the natural sum, due to Hessenberg, and the Milner--Rado sum.

\begin{definition}
Let $\alpha$ and $\beta$ be ordinals. Then we may choose a sequence of ordinals $\gamma_1>\gamma_2>\dots>\gamma_n$ and $l_1,l_2,\dots,l_n,m_1,m_2,\dots,m_n\in\omega$ such that
\[\alpha=\omega^{\gamma_1}\cdot l_1+\omega^{\gamma_2}\cdot l_2+\cdots+\omega^{\gamma_n}\cdot l_n\]
and
\[\beta=\omega^{\gamma_1}\cdot m_1+\omega^{\gamma_2}\cdot m_2+\cdots+\omega^{\gamma_n}\cdot m_n.\]
We define the \emph{natural sum} of $\alpha$ and $\beta$ by
\[\alpha\#\beta=\omega^{\gamma_1}\cdot\left(l_1+m_1\right)+\omega^{\gamma_2}\cdot\left(l_2+m_2\right)+\cdots+\omega^{\gamma_n}\cdot\left(l_n+m_n\right).\]
\end{definition}

\begin{definition}
Let $\alpha$ and $\beta$ be ordinals. Then we define the \emph{Milner--Rado sum} of $\alpha$ and $\beta$, denoted by $\alpha\odot\beta$, to be the least ordinal $\delta$ such that if $\widetilde\alpha<\alpha$ and $\widetilde\beta<\beta$ then $\delta\neq\widetilde\alpha\#\widetilde\beta$.
\end{definition}

Milner and Rado \cite{milnerrado} observed that if $\zeta>\alpha\odot\beta$, $\widetilde\alpha<\alpha$ and $\widetilde\beta<\beta$, then $\zeta\neq\widetilde\alpha\#\widetilde\beta$.

They also observed that both $\#$ and $\odot$ are commutative and associative, and so brackets may be omitted when three or more ordinals are summed. Notice that if $\alpha_1,\alpha_2,\dots,\alpha_k$ are ordinals, then $\alpha_1\odot\alpha_2\odot\cdots\odot\alpha_k$ is simply the least ordinal $\delta$ such that if $\widetilde\alpha_i<\alpha_i$ for all $i\in\left\{1,2,\dots,k\right\}$ then $\delta\neq\widetilde\alpha_1\#\widetilde\alpha_2\#\cdots\#\widetilde\alpha_k$.

As part of their computation of the non-topological pigeonhole numbers, Milner and Rado computed the Milner--Rado sum of an arbitrary finite sequence of ordinals.

\begin{theorem}[Milner--Rado]\label{milnerrado}
Let $\alpha_1,\alpha_2,\dots,\alpha_k$ be non-zero ordinals. We may choose a sequence of ordinals $\gamma_1>\gamma_2>\dots>\gamma_N$ and, for each $i\in\left\{1,2,\dots,k\right\}$, $m_{i1},m_{i2},\dots,m_{in_i}\in\omega$ such that for each $i\in\left\{1,2,\dots,k\right\}$, $m_{in_i}>0$ and
\[\alpha_i=\omega^{\gamma_1}\cdot m_{i1}+\omega^{\gamma_2}\cdot m_{i2}+\cdots+\omega^{\gamma_{n_i}}\cdot m_{in_i}.\]

Let $n=\operatorname{min}\left\{n_1,n_2,\dots,n_k\right\}$ and let $s_j=\sum_{i=1}^km_{ij}$ for each $j\in\left\{1,2,\dots,n\right\}$. Finally let $t=\left|\left\{i\in\left\{1,2,\dots,k\right\}:n_i=n\right\}\right|$. Then
\begin{align*}
P^{ord}\left(\alpha_1,\alpha_2,\dots,\alpha_k\right)&=\alpha_1\odot\alpha_2\odot\cdots\odot\alpha_k\\
&=\omega^{\gamma_1}\cdot s_1+\omega^{\gamma_2}\cdot s_2+\cdots+\omega^{\gamma_{n-1}}\cdot s_{n-1}+\omega^{\gamma_n}\cdot\left(s_n-t+1\right).
\end{align*}
\end{theorem}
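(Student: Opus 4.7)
My plan is to decouple the two equalities in the theorem: first I will establish the identity $P^{ord}(\alpha_1,\dots,\alpha_k) = \alpha_1 \odot \cdots \odot \alpha_k$, and then independently compute the Milner--Rado sum on the right. Both halves rely on the following standard interleaving fact, which I will take for granted: any partition of an ordinal $\delta$ into $k$ subsets of order types $\beta_1,\dots,\beta_k$ satisfies $\delta \leq \beta_1 \# \cdots \# \beta_k$, and conversely $\beta_1 \# \cdots \# \beta_k$ itself admits a partition into pieces of those exact order types.

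Setting $\delta := \alpha_1 \odot \cdots \odot \alpha_k$, the upper bound $P^{ord}(\alpha_1,\dots,\alpha_k) \leq \delta$ proceeds by contradiction: given a $k$-partition of $\delta$ with piece order types $\beta_1,\dots,\beta_k$, if every $\beta_i < \alpha_i$ then the defining property of $\odot$ gives $\beta_1 \# \cdots \# \beta_k \neq \delta$, and the Milner--Rado observation (iterated from the case $k = 2$ quoted in the excerpt via associativity of $\odot$) rules out $\beta_1 \# \cdots \# \beta_k > \delta$, so $\beta_1 \# \cdots \# \beta_k < \delta$, contradicting the interleaving inequality. For the matching lower bound, any $\epsilon < \delta$ equals $\widetilde\alpha_1 \# \cdots \# \widetilde\alpha_k$ with each $\widetilde\alpha_i < \alpha_i$ by minimality of $\delta$, and the converse interleaving fact partitions $\epsilon$ into pieces of those exact order types, witnessing $\epsilon \not\to (\alpha_i)^1_k$.

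To compute $\delta$ explicitly, let $D := \omega^{\gamma_1} s_1 + \cdots + \omega^{\gamma_{n-1}} s_{n-1} + \omega^{\gamma_n}(s_n - t + 1)$. I will verify (i) that $D \neq \widetilde\alpha_1 \# \cdots \# \widetilde\alpha_k$ for any $\widetilde\alpha_i < \alpha_i$, and (ii) that every $\epsilon < D$ admits such a representation. Since Cantor normal form coefficients add under $\#$, matching $D$'s coefficient $s_j$ at $\gamma_j$ for each $j < n$ forces each $\widetilde\alpha_i$'s $\gamma_j$-coefficient to equal $m_{ij}$ exactly (any larger value makes $\widetilde\alpha_i \geq \alpha_i$, using $n_i \geq n > j$); with these pinned, the $\gamma_n$-coefficient of $\widetilde\alpha_i$ is capped at $m_{in}$ when $n_i > n$ and at $m_{in} - 1$ when $n_i = n$, summing to at most $s_n - t$, which proves (i). For (ii), I will divide $\epsilon$ successively by $\omega^{\gamma_j}$ (matching the targets $s_l$ for $l < j$ forces each residual below $\omega^{\gamma_j + 1}$, precluding rogue exponents strictly between $\gamma_{j-1}$ and $\gamma_j$) until reaching the least $j^*$ at which $\epsilon$'s coefficient falls strictly below $D$'s; then set each $\widetilde\alpha_i$'s $\gamma_j$-coefficient to $m_{ij}$ for $j < j^*$, distribute $\epsilon$'s $\gamma_{j^*}$-coefficient subject to the bounds from (i), and stash the residual $\rho < \omega^{\gamma_{j^*}}$ inside a single $\widetilde\alpha_{i_0}$ whose $\gamma_{j^*}$-coefficient sits strictly below its maximum, guaranteeing $\widetilde\alpha_{i_0} < \alpha_{i_0}$ irrespective of $\rho$.

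The main obstacle will be the boundary case $j^* = n$ of the construction for (ii): here the inequality $c_n \leq s_n - t$ can force every coefficient in the distribution to be maximal, and a valid stash index $i_0$ must be drawn from among the $t \geq 1$ indices with $n_{i_0} = n$, whose existence is guaranteed by $n = \min n_i$. For such an $i_0$, setting its $\gamma_n$-coefficient to $m_{i_0 n} - 1$ absorbs $\rho < \omega^{\gamma_n}$ via $\omega^{\gamma_n}(m_{i_0 n} - 1) + \rho < \omega^{\gamma_n} m_{i_0 n}$, preserving $\widetilde\alpha_{i_0} < \alpha_{i_0}$. Once this case is verified, (i) and (ii) give $\delta = D$, which combined with the identity from the first half yields the theorem.
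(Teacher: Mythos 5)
Your argument is essentially correct, but there is nothing in the paper to compare it against: the paper states this theorem as a known result of Milner and Rado and gives no proof, so you are supplying from scratch what the paper obtains by citation. Two remarks on completeness. First, the interleaving facts you take for granted (that a partition of $\delta$ into pieces of order types $\beta_1,\dots,\beta_k$ forces $\delta\le\beta_1\#\cdots\#\beta_k$, and that $\beta_1\#\cdots\#\beta_k$ is realised by some shuffle of pieces of exactly those types) are classical but carry real content, so a self-contained write-up would have to prove them; likewise your iterated use of the two-summand Milner--Rado observation needs the small inductive remark that $\widetilde\alpha_1\#\cdots\#\widetilde\alpha_{k-1}$ is itself below $\alpha_1\odot\cdots\odot\alpha_{k-1}$ before the $k=2$ case can be applied, which is exactly what the induction hypothesis supplies. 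Second, the computational core is sound: for (i), agreement of coefficients at $\gamma_1,\dots,\gamma_{j-1}$ together with a strictly larger coefficient at $\gamma_j$ does force $\widetilde\alpha_i>\alpha_i$ (here $n_i\ge n>j$ is used only through the fact that the tail of $\alpha_i$ beyond $\gamma_j$ lies below $\omega^{\gamma_j}$), and the caps $m_{in}$ for $n_i>n$ versus $m_{in}-1$ for $n_i=n$ sum to $s_n-t$, one short of the final coefficient of $D$; for (ii), your handling of the boundary case $j^*=n$ is exactly right, since stashing $\rho<\omega^{\gamma_n}$ at an index $i_0$ with $n_{i_0}=n$ gives $\omega^{\gamma_n}\left(m_{i_0n}-1\right)+\rho<\omega^{\gamma_n}m_{i_0n}$ and hence $\widetilde\alpha_{i_0}<\alpha_{i_0}$ even when that coefficient sits at its cap, whereas stashing at an index with $n_{i_0}>n$ whose coefficient equals $m_{i_0n}$ could fail because $\rho$ need not be smaller than that ordinal's tail. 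So the restriction on the stash index is precisely what makes the construction go through, and together (i), (ii) and the first half yield the theorem.
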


This result permits us to reduce the calculation of certain topological pigeonhole numbers to corresponding non-topological pigeonhole numbers as in Theorem \ref{link}.

\subsection{A simple example}

We conclude this section by proving the following special case of Theorem \ref{link} in order to illustrate the character of many later proofs.

\begin{proposition}\label{simple}
Let $k$ be a positive integer. Then
\[P^{top}\left(\omega+1\right)_k=\omega^k+1.\]
\end{proposition}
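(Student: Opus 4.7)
My plan is to establish the two bounds $\omega^k\nrightarrow\left(top\,\omega+1\right)^1_k$ and $\omega^k+1\rightarrow\left(top\,\omega+1\right)^1_k$; since $\beta\rightarrow\left(top\,\omega+1\right)^1_k$ is monotone in $\beta$, these together force $P^{top}\left(\omega+1\right)_k=\omega^k+1$ (alternatively one may invoke Proposition \ref{pseudosummary} with $\gamma=k$, $m=1$).

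For the lower bound, I would colour each $x\in\omega^k$ by its Cantor--Bendixson rank $\operatorname{CB}\left(x\right)\in\left\{0,1,\ldots,k-1\right\}$, which uses exactly $k$ colours. If $X\subseteq\omega^k$ were a monochromatic copy of $\omega+1$ of colour $c$, then its unique limit point $y$ would be a limit in $\omega^k$ of points of $X\setminus\left\{y\right\}$, all lying in $\left(\omega^k\right)^{\left(c\right)}$ by Lemma \ref{cantorbendixson}. Hence $y\in\left(\omega^k\right)^{\left(c+1\right)}$, so $\operatorname{CB}\left(y\right)\geq c+1$, contradicting $\operatorname{CB}\left(y\right)=c$.

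For the upper bound, I would induct on $k$, the base case $k=1$ being trivial. For the inductive step, let $c:\omega^k+1\rightarrow k$ be any colouring and set $i_0=c\left(\omega^k\right)$. The sequence $\left(\omega^{k-1}\cdot n\right)_{n\geq 1}$ is strictly increasing with supremum $\omega^k$; if infinitely many of its terms have colour $i_0$, then such a subsequence together with $\omega^k$ is a monochromatic copy of $\omega+1$. Otherwise the pigeonhole principle yields some colour $i_1\neq i_0$ and a strictly increasing sequence $\left(n_j\right)_{j\in\omega}$ with $c\left(\omega^{k-1}\cdot\left(n_j+1\right)\right)=i_1$ for all $j$. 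The intervals $I_{n_j}=\left(\omega^{k-1}\cdot n_j,\omega^{k-1}\cdot\left(n_j+1\right)\right]$ are clopen in $\omega^k+1$ and each is order-homeomorphic to $\omega^{k-1}+1$. If colour $i_0$ is absent from some $I_{n_j}$, then $c$ restricted to $I_{n_j}$ uses only $k-1$ colours and the inductive hypothesis produces the desired monochromatic copy of $\omega+1$. Otherwise I pick $x_j\in I_{n_j}$ with $c\left(x_j\right)=i_0$ for each $j$; since $x_j>\omega^{k-1}\cdot n_j$ and $n_j\rightarrow\infty$, the points $x_j$ converge to $\omega^k$, and $\left\{x_j:j\in\omega\right\}\cup\left\{\omega^k\right\}$ is a monochromatic copy of $\omega+1$ of colour $i_0$.

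The main subtle point is the nested case analysis in the inductive step, which relies on decomposing $\omega^k+1$ as the top point $\omega^k$ together with clopen blocks $I_{n_j}\cong\omega^{k-1}+1$. The dichotomy ``either colour $i_0$ hits every block, allowing a diagonal sequence converging to $\omega^k$, or it misses some block, allowing the induction to fire with only $k-1$ colours available'' is what makes the argument close cleanly.
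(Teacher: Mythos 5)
Your proof is correct. The lower bound is exactly the paper's: colour by Cantor--Bendixson rank and note that each rank level is discrete (your explicit derivative computation via Lemma \ref{cantorbendixson} is just a spelled-out version of the paper's one-line observation). The upper bound, however, takes a genuinely different route. The paper first proves a canonization statement (Lemma \ref{simplelemma}): inside any $k$-coloured $\omega^k$ there is a copy of $\omega^k$ on which the colouring is constant on each Cantor--Bendixson level, and then adjoining the top point gives $k+1$ monochromatic levels, so two levels share a colour and their union contains a copy of $\omega+1$. You instead induct directly on $k$: decompose $\omega^k+1$ into the top point together with clopen blocks order-homeomorphic to $\omega^{k-1}+1$, and run the dichotomy ``the colour of $\omega^k$ meets every block, so a diagonal choice of points converges to $\omega^k$ and gives a copy of $\omega+1$; or it misses some block, and that block is coloured with only $k-1$ colours, so the inductive hypothesis applies.'' This is sound (your intermediate selection of the colour $i_1$ and of the particular blocks $I_{n_j}$ is actually superfluous --- the same dichotomy works for all blocks $I_n$ --- but it does no harm), and it is arguably more elementary and self-contained than the paper's argument. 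What the paper's route buys is the stronger Lemma \ref{simplelemma} itself: the proposition is included precisely to illustrate the template ``reduce an arbitrary colouring to one of the form $\widetilde c\circ\operatorname{CB}$, then apply a pigeonhole principle to the exponent,'' which is the shape of the later proofs where Weiss's theorem replaces the lemma; your induction proves the proposition but does not produce that canonical-colouring statement or foreshadow that method.
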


The main idea in the proof is the following result, which says that any finite colouring of $\omega^n$ is in some sense similar to a colouring which is constant on ordinals of the same Cantor--Bendixson rank.

\begin{lemma}\label{simplelemma}
Let $k$ and $n$ be positive integers and let $c:\omega^n\rightarrow k$. Then there is some subset $X\subseteq\omega^n$ such that $X\cong\omega^n$ and $c$ is constant on $X^{\left(i\right)}\setminus X^{\left(i+1\right)}$ for each $i\in n$.
\end{lemma}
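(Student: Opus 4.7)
The plan is to prove a strengthened form of the statement by induction on $n$: for every $c:\omega^n\rightarrow k$ there exists $X\subseteq\omega^n$ that is \emph{order-homeomorphic} to $\omega^n$ (in particular $X\cong\omega^n$) and on whose levels $c$ is constant. Order-homeomorphic copies are useful because, by Proposition~\ref{ordertypehomeomorphic}, such an $X$ is then closed in the least ordinal containing it; this control over closures is what makes the Cantor--Bendixson derivatives behave predictably when one tries to assemble a witness for $n+1$ out of witnesses for $n$.

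The base case $n=1$ is the classical infinite pigeonhole principle, since any infinite subset of $\omega$ is order-homeomorphic to $\omega$. For the inductive step, I would decompose $\omega^{n+1}=\bigsqcup_{m<\omega}I_m$ where $I_m=[\omega^n\cdot m,\omega^n\cdot(m+1))$; each $I_m$ is order-homeomorphic to $\omega^n$ (its minimum $\omega^n\cdot m$ is isolated in the subspace, since $\{\omega^n\cdot m\}=[0,\omega^n\cdot m+1)\cap I_m$ is open in $I_m$). Apply the strengthened inductive hypothesis to $c|_{I_m}$ to obtain $X_m\subseteq I_m$ order-homeomorphic to $\omega^n$ with $c$ taking a constant value $c_m^{(i)}$ on each $X_m^{(i)}\setminus X_m^{(i+1)}$ for $i\in n$. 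The extended tuple
\[\bigl(c_m^{(0)},\dots,c_m^{(n-1)},c(\omega^n\cdot(m+1))\bigr)\in k^{n+1}\]
is constant on some infinite $M\subseteq\omega$ by pigeonhole; call the common value $(d_0,\dots,d_n)$. Then define $X=\bigcup_{m\in M}\bigl(X_m\cup\{\omega^n\cdot(m+1)\}\bigr)$.

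The main obstacle, and the technical heart of the argument, is verifying that $X$ is order-homeomorphic to $\omega^{n+1}$ and that $c$ is constant on its levels. By Proposition~\ref{ordertypehomeomorphic}, each $X_m$ is closed in the least ordinal containing it; this ordinal must be $\omega^n\cdot(m+1)$, since any proper initial segment of $I_m$ has order type strictly less than $\omega^n$ while $X_m$ has order type $\omega^n$. Hence $\omega^n\cdot(m+1)$ is a limit of $X_m$, and even of $X_m^{(i)}$ for every $i<n$: $X_m^{(i)}\cong(\omega^n)^{(i)}$ is non-compact, so it has an accumulation point not in itself, and the only candidate outside $X_m$ is $\omega^n\cdot(m+1)$. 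One then checks by induction on $i$ that the piece $P_m=X_m\cup\{\omega^n\cdot(m+1)\}$ satisfies $P_m^{(i)}=X_m^{(i)}\cup\{\omega^n\cdot(m+1)\}$ for $i\leq n$ and $P_m^{(n+1)}=\emptyset$. The pieces $P_m$ are clopen in $X$ (one may assume without loss of generality that $\omega^n\cdot m\notin X_m$, so that consecutive pieces are separated in $\omega^{n+1}$), and Cantor--Bendixson derivatives of $X$ are therefore computed piecewise; moreover $X$ is closed in $\omega^{n+1}$ because $\sup_{m\in M}\omega^n\cdot(m+1)=\omega^{n+1}\notin\omega^{n+1}$. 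Since $X$ has order type $(\omega^n+1)\cdot\omega=\omega^{n+1}$, Proposition~\ref{ordertypehomeomorphic} yields that $X$ is order-homeomorphic to $\omega^{n+1}$. Constancy on levels is then immediate: for $i<n$, $X^{(i)}\setminus X^{(i+1)}=\bigcup_{m\in M}(X_m^{(i)}\setminus X_m^{(i+1)})$ is coloured $d_i$, while $X^{(n)}\setminus X^{(n+1)}=\{\omega^n\cdot(m+1):m\in M\}$ is coloured $d_n$.
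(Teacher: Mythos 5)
Your proof is correct, and it reaches the lemma by a slightly different arrangement of the same ingredients (induction on $n$ plus an infinite pigeonhole over the blocks) than the paper's. The paper's inductive step applies the inductive hypothesis \emph{once}, to the set of limit points $\left\{\omega\cdot\alpha:\alpha\in\omega^{n-1}\right\}$ (a copy of $\omega^{n-1}$), so that all levels of positive rank are made monochromatic first; the isolated points are then fixed blockwise, and a single pigeonhole over blocks aligns their colour. You instead apply the inductive hypothesis once \emph{per block} $\left[\omega^n\cdot m,\omega^n\cdot\left(m+1\right)\right)$ to fix levels $0,\dots,n-1$ inside each block, and you absorb the top-rank points $\omega^n\cdot\left(m+1\right)$ into the pigeonhole by extending the colour tuple to length $n+1$. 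You also strengthen the induction to produce order-homeomorphic copies, which lets you invoke Proposition \ref{ordertypehomeomorphic} to control closures and to verify explicitly that the assembled set is closed and cofinal in $\omega^{n+1}$, hence order-homeomorphic to $\omega^{n+1}$, and that its Cantor--Bendixson levels decompose piecewise via $P_m^{\left(i\right)}=X_m^{\left(i\right)}\cup\left\{\omega^n\cdot\left(m+1\right)\right\}$; the discarding of the block minima to keep the pieces clopen is a legitimate ``without loss of generality'' since removing an isolated minimum changes neither the order type, nor closedness, nor the higher derivatives. The paper's proof is shorter because it leaves these topological verifications implicit (``passing to a subset we may assume\dots''), whereas your strengthened statement makes them routine at the cost of more checking; both are sound.
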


\begin{proof}
The proof is by induction on $n$. The case $n=1$ is simply the ordinary pigeonhole principle $\omega\rightarrow\left(top\,\omega\right)^1_k$, so assume $n>1$. Consider first the restriction of $c$ to $\left\{\omega\cdot\alpha:\alpha\in\omega^{n-1}\right\}$. By the inductive hypothesis, passing to a subset we may assume that $c$ is constant on $\left(\omega^n\right)^{\left(i\right)}\setminus\left(\omega^n\right)^{\left(i+1\right)}$ for each $i\in n\setminus\left\{0\right\}$. By considering the restriction of $c$ to $\left[\omega^{n-1}\cdot m+1,\omega^{n-1}\cdot\left(m+1\right)\right]$ for each $m\in\omega$, we may likewise assume that $c$ is constant on $\left(\omega^n\setminus\left(\omega^n\right)^\prime\right)\cap\left[\omega^{n-1}\cdot m+1,\omega^{n-1}\cdot\left(m+1\right)\right]$ for each $m\in\omega$, taking the value $c_m$, say. To finish, simply choose an infinite subset $S\subseteq\omega$ such that $c_l=c_m$ for all $l,m\in S$, and take $X$ to be\phantom\qedhere
\[\bigcup_{m\in S}\left[\omega^{n-1}\cdot m+1,\omega^{n-1}\cdot\left(m+1\right)\right].\pushQED{\qed}\qedhere\popQED\]
\end{proof}

\begin{proof}[Proof of Proposition \ref{simple}]
To see that $\omega^k\nrightarrow\left(top\,\omega+1\right)^1_k$, simply colour each $x\in\omega^k$ with colour $\operatorname{CB}\left(x\right)$, and observe that each colour class is discrete.

To see that $\omega^k+1\rightarrow\left(top\,\omega+1\right)^1_k$, let $c:\omega^k+1\rightarrow k$. Choose $X\subseteq\omega^k$ as in Lemma \ref{simplelemma}, and let $Y=X\cup\left\{\omega^k\right\}$. Since $Y^{\left(k\right)}$ is simply the singleton $\left\{\omega^k\right\}$, we in fact have that $c$ is constant on $Y^{\left(i\right)}\setminus Y^{\left(i+1\right)}$ for each $i\in k+1$. By the finite pigeonhole principle $k+1\rightarrow\left(2\right)^1_k$, it follows that $c$ is constant on $\left(Y^{\left(i\right)}\setminus Y^{\left(i+1\right)}\right)\cup\left(Y^{\left(j\right)}\setminus Y^{\left(j+1\right)}\right)$ for some distinct $i,j\in k+1$, a set which is easily seen to contain a homeomorphic copy of $\omega+1$.
\end{proof}

The key idea to take from this proof is the importance of colourings of the form $\widetilde c\circ\operatorname{CB}$ for some $\widetilde c:k\rightarrow k$. The negative relation was proved using a counterexample of this form. The positive relation was proved by showing in the Lemma that any colouring must be similar to some colouring of this form, and applying the pigeonhole principle to $k+1$. The proof of Theorem \ref{link} will be similar, with this use of the Lemma and the pigeonhole principle replaced by Weiss's result.

\section{Statement of the principle}

We now state the main theorem of this article. Although it may not be necessary to go through the details of every case at this stage, they are included here for completeness and for reference. Our main breakthrough is given in case \ref{finiteofcountable}, and includes Theorem \ref{link} as a special case.

Observe first that if $\alpha_r=0$ for some $r\in\kappa$, then $P^{top}\left(\alpha_i\right)_{i\in\kappa}=0$, and if $I\subseteq\kappa$ with $\alpha_i=1$ for all $i\in I$, then $P^{top}\left(\alpha_i\right)_{i\in\kappa}=P^{top}\left(\alpha_i\right)_{i\in\kappa\setminus I}$. Thus it is sufficient to consider the cases in which $\alpha_i\geq 2$ for all $i\in\kappa$.

\begin{theorem}[The topological pigeonhole principle for ordinals]\label{principle}
Let $\kappa$ be a cardinal, and let $\alpha_i$ be an ordinal with $\alpha_i\geq 2$ for all $i\in\kappa$.
\begin{enumerate}
\item\label{infinityprovablecase}
If $\alpha_r\geq\omega_1+1$ and $\alpha_s\geq\omega+1$ for some distinct $r,s\in\kappa$, then $P^{top}\left(\alpha_i\right)_{i\in\kappa}=\infty$.
\item\label{lopsidedcase}
If $\alpha_r\geq\omega_1+1$ for some $r\in\kappa$ and $\alpha_i\leq\omega$ for all $i\in\kappa\setminus\left\{r\right\}$:
\begin{enumerate}
\item
if $\kappa\geq\aleph_0$:
\begin{enumerate}
\item
if $\alpha_r$ is a not a power of $\omega$, then $P^{top}\left(\alpha_i\right)_{i\in\kappa}=\alpha_r\cdot\kappa^+$;
\item
if $\alpha_r$ is a power of $\omega$:
\begin{enumerate}
\item
if $\operatorname{cf}\left(\alpha_r\right)>\kappa$, then $P^{top}\left(\alpha_i\right)_{i\in\kappa}=\alpha_r$;
\item
if $\aleph_0<\operatorname{cf}\left(\alpha_r\right)\leq\kappa$, then $P^{top}\left(\alpha_i\right)_{i\in\kappa}=\alpha_r\cdot\kappa^+$;
\item
if $\operatorname{cf}\left(\alpha_r\right)=\aleph_0$, then we may write $\alpha_r=\omega^\beta$ and $\beta=\gamma+\omega^\delta$ with $\delta$ not a limit ordinal of uncountable cofinality; then
\begin{itemize}
\item
if $\delta<\kappa^+$, then $P^{top}\left(\alpha_i\right)_{i\in\kappa}=\alpha_r\cdot\kappa^+$;
\item
if $\delta>\kappa^+$, then $P^{top}\left(\alpha_i\right)_{i\in\kappa}=\alpha_r$;
\end{itemize}
\end{enumerate}
\end{enumerate}
\item
if $\kappa<\aleph_0$ and $\alpha_s=\omega$ for some $s\in\kappa\setminus\left\{r\right\}$:
\begin{enumerate}
\item
if $\alpha_r$ is a power of $\omega$, then $P^{top}\left(\alpha_i\right)_{i\in\kappa}=\alpha_r$;
\item
if $\alpha_r$ is not a power of $\omega$, then $P^{top}\left(\alpha_i\right)_{i\in\kappa}=\alpha_r\cdot\omega$;
\end{enumerate}
\item
if $\kappa<\aleph_0$ and $\alpha_i<\omega$ for all $i\in\kappa\setminus\left\{r\right\}$:
\begin{enumerate}
\item
if $\alpha_r$ is a power of $\omega$ or $\kappa=1$, then $P^{top}\left(\alpha_i\right)_{i\in\kappa}=\alpha_r$;
\item\label{uncountablewithmultiplescase}
if $\kappa>1$ and $\alpha_r$ is not a power of $\omega$, then $\omega^\beta\cdot m+1\leq\alpha_r\leq\omega^\beta\cdot\left(m+1\right)$ for some ordinal $\beta$ and some $m\in\omega\setminus\left\{0\right\}$; then
\[P^{top}\left(\alpha_i\right)_{i\in\kappa}=\omega^\beta\cdot\left(\sum_{i\in\kappa\setminus\left\{r\right\}}\left(\alpha_i-1\right)+m\right)+1.\]
\end{enumerate}
\end{enumerate}
\item\label{independencecase}
If $\alpha_i\leq\omega_1$ for all $i\in\kappa$ and $\alpha_r,\alpha_s=\omega_1$ for some distinct $r,s\in\kappa$, then the value of $P^{top}\left(\alpha_i\right)_{i\in\kappa}$ is independent of $\mathsf{ZFC}$ in the following sense.

Write ``$P_\kappa=x$'' for the statement, ``$\kappa$ is a cardinal, and for all sequences $\left(\alpha_i\right)_{i\in\kappa}$ of ordinals, if $2\leq\alpha_i\leq\omega_1$ for all $i\in\kappa$ and $\alpha_r,\alpha_s=\omega_1$ for some distinct $r,s\in\kappa$, then $P^{top}\left(\alpha_i\right)_{i\in\kappa}=x$''. Likewise for ``$P_\kappa\geq x$''.

Firstly,
\[\text{``for all cardinals $\kappa\geq 2$, $P_\kappa\geq\operatorname{max}\left\{\omega_2,\kappa^+\right\}$''}\]
is a theorem of $\mathsf{ZFC}$. Secondly, if $\mathsf{ZFC}$ is consistent, then so is
\[\mathsf{ZFC}+\text{``for all cardinals $\kappa\geq 2$, $P_\kappa=\infty$''}.\]
Thirdly, if $\mathsf{ZFC}+$``there exists a supercompact cardinal'' is consistent, then so is
\[\mathsf{ZFC}+\text{``for all cardinals $\kappa\geq 2$, $P_\kappa=\operatorname{max}\left\{\omega_2,\kappa^+\right\}$''}.\]
Moreover, some large cardinal assumption is required, since $\mathsf{ZFC}+$``there exists a Mahlo cardinal'' is consistent if and only if
\[\mathsf{ZFC}+\text{``$\omega_2\rightarrow\left(top\,\omega_1\right)^1_2$''}\]
is consistent.
\item\label{justoneomega1}
If $\alpha_r=\omega_1$ for some $r\in\kappa$ and $\alpha_i<\omega_1$ for all $i\in\kappa\setminus\left\{r\right\}$, then $P^{top}\left(\alpha_i\right)_{i\in\kappa}=\operatorname{max}\left\{\omega_1,\kappa^+\right\}$.
\item\label{infiniteofcountablecase}
If $\alpha_i<\omega_1$ for all $i\in\kappa$ and $\kappa\geq\aleph_0$, then $P^{top}\left(\alpha_i\right)_{i\in\kappa}=\kappa^+$.
\item\label{finiteofcountable}
If $\alpha_i<\omega_1$ for all $i\in\kappa$ and $\kappa<\aleph_0$:
\begin{enumerate}
\item
if $\alpha_i<\omega$ for all $i\in\kappa$, then
\[P^{top}\left(\alpha_i\right)_{i\in\kappa}=\sum_{i\in\kappa}\left(\alpha_i-1\right)+1;\]
\item\label{finiteofcountablepower}
if $\alpha_r$ is a power of $\omega$ for some $r\in\kappa$, then
\[P^{top}\left(\alpha_i\right)_{i\in\kappa}=\omega^{\beta_0\odot\beta_1\odot\cdots\odot\beta_{\kappa-1}},\]
where for each $i\in\kappa$, $\beta_i$ is minimal subject to the condition that $\alpha_i\leq\omega^{\beta_i}$;
\item\label{finiteofcountableother}
if $\alpha_i$ is not a power of $\omega$ for any $i\in\kappa$ and $\alpha_r\geq\omega$ for some $r\in\kappa$, then for each $i\in\kappa$ we can find an ordinal $\beta_i$ and $m_i\in\omega\setminus\left\{0\right\}$ such that either $\alpha_i=m_i$ and $\beta_i=0$, or $\omega^{\beta_i}\cdot m_i+1\leq\alpha_i\leq\omega^{\beta_i}\cdot\left(m_i+1\right)$ and $\beta_i>0$; then:
\begin{enumerate}
\item\label{distinguishingcase}
if there exists $s\in\kappa$ such that $\alpha_s=\omega^{\beta_s}\cdot\left(m_s+1\right)$, $\operatorname{CB}\left(\beta_s\right)\leq\operatorname{CB}\left(\beta_i\right)$ for all $i\in\kappa$, and $m_i=1$ for all $i\in\kappa\setminus\left\{s\right\}$, then
\[P^{top}\left(\alpha_i\right)_{i\in\kappa}=\omega^{\beta_0\#\beta_1\#\cdots\#\beta_{\kappa-1}}\cdot\left(m_s+1\right);\]
\item
otherwise,
\[P^{top}\left(\alpha_i\right)_{i\in\kappa}=\omega^{\beta_0\#\beta_1\#\cdots\#\beta_{\kappa-1}}\cdot\left(\sum_{i\in\kappa}\left(m_i-1\right)+1\right)+1.\]
\end{enumerate}
\end{enumerate}
\end{enumerate}
\end{theorem}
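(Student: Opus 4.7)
The proof is a large case analysis organised exactly as in the statement, with matching upper and lower bounds in each case. Lower bounds are produced by explicit colourings --- typically of the form $\widetilde c\circ\operatorname{CB}$ as in Proposition \ref{simple}, or by partitions keyed to cofinality --- while upper bounds proceed by canonicalisation: first reduce an arbitrary colouring to a structured one (constant on Cantor--Bendixson strata, or constant on a club or stationary set), then apply either finite pigeonhole, the non-topological Milner--Rado formula (Theorem \ref{milnerrado}), or the result of Weiss used by Baumgartner. Throughout, biembeddability (Lemmas \ref{pseudorelevant} and \ref{pseudopositive} together with Proposition \ref{pseudosummary}) lets us assume each $\alpha_i$ is of a canonical form $\omega^\gamma\cdot m+1$, $\omega^\gamma\cdot m$, or finite, and Theorem \ref{orderreinforcing} lets us pass from homeomorphic copies to order-homeomorphic ones whenever the order structure is needed.

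For case \ref{infinityprovablecase} I would exhibit a single colouring witnessing $\beta\nrightarrow\left(top\,\omega+1,top\,\omega_1+1\right)^1_2$ for every ordinal $\beta$: colour $x$ red if $\operatorname{cf}\left(x\right)\leq\omega$ (with $0$ and successors counted as having countable cofinality) and blue otherwise. The top point of any copy of $\omega_1+1$ has uncountable cofinality and so is blue, ruling out red; conversely no $\omega$-sequence converges in the order topology to a point of uncountable cofinality, ruling out blue. Monotonicity then gives $P^{top}=\infty$. Case \ref{lopsidedcase} splits into many subcases: for the lower bounds I would use cofinality-based or $\operatorname{CB}$-rank partitions of $\alpha_r\cdot\kappa^+$ (or of $\alpha_r$ itself when $\operatorname{cf}\left(\alpha_r\right)>\kappa$), and for the upper bounds I would apply Fodor-style pressing-down to locate, inside a colour class cofinal in the top point, a copy of $\alpha_r$ via Theorem \ref{orderreinforcing}, using the smallness of the remaining $\alpha_i$ to dispose of them by elementary pigeonhole.

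Cases \ref{justoneomega1} and \ref{infiniteofcountablecase} give $\operatorname{max}\left\{\omega_1,\kappa^+\right\}$ and $\kappa^+$ respectively: the lower bound partitions any $\beta<\kappa^+$ into $\kappa$ bounded discrete pieces, and the upper bound generalises the classical $\omega_1\rightarrow\left(top\,\alpha\right)^1_{\aleph_0}$ argument to $\kappa^+$ using the $\kappa^+$-completeness of the club filter, Fodor's lemma to produce a homogeneous stationary set, and the order-reinforcing property of $\omega_1$ (Theorem \ref{orderreinforcing}) in case \ref{justoneomega1}. Case \ref{independencecase} has three parts: the ZFC lower bound $P_\kappa\geq\operatorname{max}\left\{\omega_2,\kappa^+\right\}$ comes from combining the partition of case \ref{infiniteofcountablecase} with the introductory fact that $\alpha\nrightarrow\left(top\,\omega_1\right)^1_2$ for $\alpha\in\omega_2$; the $V=L$ half quotes Prikry--Solovay and lifts from two colours to $\kappa$ by superimposing their colouring on a countable-cofinality partition; the supercompact half quotes Shelah's $\omega_2\rightarrow\left(top\,\omega_1\right)^1_2$ and upgrades it to $P_\kappa\leq\operatorname{max}\left\{\omega_2,\kappa^+\right\}$ by the same canonicalisation scheme.

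The substance of the theorem is case \ref{finiteofcountable}, which subsumes Theorem \ref{link} and is proved by extending Proposition \ref{simple}. Weiss's result plays the role of Lemma \ref{simplelemma}, allowing any finite colouring of a sufficiently large power of $\omega$ to be reduced to one depending only on Cantor--Bendixson rank. From there \ref{finiteofcountablepower} follows by combining Weiss's result with Theorem \ref{milnerrado}: the lower bound uses a $\operatorname{CB}$-rank colouring that transports the non-topological lower bound, and the upper bound canonicalises to such a colouring and invokes the non-topological pigeonhole principle. For \ref{finiteofcountableother}, each $\alpha_i$ is first replaced via Proposition \ref{pseudosummary} by a biembeddable canonical form, and then the argument branches. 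I expect the dichotomy between subcase \ref{distinguishingcase} and its complement to be the main obstacle: Proposition \ref{pseudonegative}(\ref{pseudodifficultcase}), which says $\omega^\gamma\cdot\left(m+1\right)\not\approxeq\omega^\gamma\cdot m+1$, is precisely what forces the two candidate formulas apart, and matching each configuration of $\left(\alpha_i\right)_{i\in\kappa}$ to the correct answer requires careful bookkeeping of which $\beta_i$ has minimal Cantor--Bendixson rank (so can absorb the factor $m_s+1$) and when the corresponding $m_i$ may exceed $1$. In the boundary case one must construct the lower-bound colouring explicitly so that the distinguished index $s$ is the unique coordinate capable of hosting a homeomorphic copy of $\omega^{\beta_s}\cdot\left(m_s+1\right)$.
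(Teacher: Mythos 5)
Your overall architecture (explicit $\operatorname{CB}$-rank or cofinality colourings for lower bounds, Weiss's theorem plus Milner--Rado for the finite-countable case, stationary sets for the uncountable cases) does match the paper, but two of your mechanisms would actually fail. First, in case \ref{justoneomega1} the plan ``Fodor's lemma to produce a homogeneous stationary set, plus the order-reinforcing property of $\omega_1$'' cannot work: if the stationary colour class happens to be the one whose target is $\omega_1$, you would need ``every stationary subset of $E^{\kappa^+}_\omega$ contains a homeomorphic copy of $\omega_1$'', which is exactly Shelah's $\operatorname{Fr}^+\left(\kappa^+\right)$ and is independent of $\mathsf{ZFC}$ --- indeed that independence is the content of case \ref{independencecase}, so a $\mathsf{ZFC}$ proof along these lines would be contradictory. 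The paper instead splits on whether $\left(c^{-1}\left(\left\{r\right\}\right)\cap E^{\kappa^+}_\omega\right)\cup\left(\kappa^+\setminus E^{\kappa^+}_\omega\right)$ contains a club, extracting the copy of $\omega_1$ from a \emph{club} (via its set of limit points), and only in the complementary case applies stationarity together with Theorem \ref{friedmangeneralisation} to the countable targets. Relatedly, in case \ref{independencecase} your supercompact direction quotes only the consistency of $\omega_2\rightarrow\left(top\,\omega_1\right)^1_2$; that cannot be ``upgraded by canonicalisation'' to $\kappa^+\rightarrow\left(top\,\omega_1\right)^1_\kappa$ for $\kappa\geq\omega_2$, which is what ``$P_\kappa=\operatorname{max}\left\{\omega_2,\kappa^+\right\}$ for all $\kappa$'' requires. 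One needs Shelah's stronger theorem that $\operatorname{Fr}^+\left(\lambda\right)$ for \emph{every} regular $\lambda\geq\aleph_2$ is consistent from a supercompact, combined with Lemma \ref{stationaryunions}.

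Two further places are missing the actual content rather than merely compressing it. In case \ref{lopsidedcase}, ``Fodor-style pressing-down toward the top point'' does not engage with what decides the answers: the spaces $\alpha_r\cdot\kappa^+$ have no top point, and the case really turns on the reduction (the paper's Lemma \ref{lopsidedsimplification}) to the question of whether every set of size less than $\lambda$ can be avoided by a copy of $\alpha_r$, followed by bespoke constructions which explain, for $\alpha_r=\omega^{\gamma+\omega^\delta}$ of countable cofinality, why the answer is $\alpha_r\cdot\kappa^+$ when $\delta<\kappa^+$ (delete a set of size $\left|\delta\right|\cdot\kappa$ indexed by $\omega^{\omega^\delta}$ to annihilate the $\gamma$-th derivative) but $\alpha_r$ when $\delta>\kappa^+$ (choose a cofinal sequence $\omega^{\beta_n}$ with $\operatorname{cf}\left(\omega^{\beta_n}\right)=\kappa^+$ to dodge the deleted set); your sketch gives no mechanism that would detect this dichotomy. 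In case \ref{finiteofcountableother} you correctly identify Proposition \ref{pseudonegative} part \ref{pseudodifficultcase} as what drives the negative half of the dichotomy, but the positive half (the ``if'' direction of Theorem \ref{distinguishing} and the residual subcase) is not just bookkeeping: it requires a genuinely new lemma (the paper's Lemma \ref{distinguishinglemma}, proved from Weiss's Lemma 2.6 by a three-way analysis of where the colours $j_0,j_1,j_2$ coincide) producing either a copy of $\omega^{\alpha_j}\cdot 2$ or a copy of $\omega^{\alpha_j}+1$ with $\operatorname{CB}\left(\alpha_j\right)$ minimal, together with Lemma \ref{findinspare} to upgrade a copy of $\omega^{\beta_j}\cdot m_j+1$ to $\omega^{\beta_j}\cdot\left(m_j+1\right)$ using a spare block; your proposal supplies none of these steps.
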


We prove this result in a case-by-case fashion, as follows. Case \ref{infinityprovablecase} has a simple proof, which we give in Proposition \ref{infinityprovable}. Case \ref{lopsidedcase} has many subcases, each of which has a relatively straightforward proof; we reformulate these subcases using an elementary argument in Lemma \ref{lopsidedsimplification}, before proving each one individually in Section \ref{principleproof}. Case \ref{independencecase} is easy to deduce from results of others, which we do in Section \ref{independenceresults}. Cases \ref{justoneomega1} and \ref{infiniteofcountablecase} have simple proofs involving stationary sets, which we give in Section \ref{justoneomega1andinfiniteofcountable}. Finally, case \ref{finiteofcountable} has the most new ideas. We provide the key ingredients for the proof in Section \ref{finiteofcountablesection}, before combining them to complete the proof in Section \ref{principleproof}. We describe the key ideas first, including the proof of Theorem \ref{link} in Theorems \ref{linksuccessor} and \ref{linkpowers}.

\section{Proof of the principle}

\subsection{Finite sequences of countable ordinals}\label{finiteofcountablesection}

We begin with case \ref{finiteofcountable} of the principle, including the proof of Theorem \ref{link}. First of all we state Weiss's result, which requires us to introduce some notation.

\begin{definition}
Let $\gamma_1\geq\gamma_2\geq\dots\geq\gamma_n$ be ordinals and $S\subseteq\left\{1,2,\dots,n\right\}$, say $S=\left\{s_1,s_2,\dots,s_l\right\}$ with $s_1<s_2<\dots<s_l$. Then we write
\[\sum_{i\in S}\omega^{\gamma_i}=\omega^{\gamma_{s_1}}+\omega^{\gamma_{s_2}}+\cdots+\omega^{\gamma_{s_l}}\]
and
\[
\left(\omega^{\omega^{\gamma_1}+\omega^{\gamma_2}+\cdots+\omega^{\gamma_n}}\right)_S=
\begin{cases}
\omega^{\sum_{i\in S}\omega^{\gamma_i}},&\text{if $S\neq\emptyset$}\\
0,&\text{if $S=\emptyset$}.
\end{cases}
\]
\end{definition}

Weiss's result is our key tool for proving positive relations in this section, and was first published by Baumgartner \cite[Theorem 2.3]{baumgartner}.

\begin{theorem}[Weiss]
Let $\gamma_1\geq\gamma_2\geq\dots\geq\gamma_n$ be countable ordinals, let \[\beta=\omega^{\omega^{\gamma_1}+\omega^{\gamma_2}+\cdots+\omega^{\gamma_n}},\]
and let $c:\beta\rightarrow 2$. Then there exists $S\subseteq\left\{1,2,\dots,n\right\}$, $X\subseteq c^{-1}\left(\left\{0\right\}\right)$ and $Y\subseteq c^{-1}\left(\left\{1\right\}\right)$ such that $X\cong\beta_S$, $Y\cong\beta_{\left(\left\{1,2,\dots,n\right\}\setminus S\right)}$ and $X$ and $Y$ are both either empty or cofinal in $\beta$.
\end{theorem}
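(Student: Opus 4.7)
The plan is to argue by induction on $n$. For the base case $n=1$ we have $\beta = \omega^{\omega^{\gamma_1}}$, and the statement reduces to the assertion that any $c : \beta \to 2$ has a colour class containing a cofinal subspace homeomorphic to $\beta$. This is a cofinal-copy strengthening of the classical indivisibility $\omega^{\omega^{\gamma_1}} \to (top\,\omega^{\omega^{\gamma_1}})^1_2$ mentioned in the introduction, and can be proved by a separate induction on $\gamma_1$: at a successor stage one applies the ordinary pigeonhole principle to a cofinal family of copies of $\omega^{\omega^{\gamma_1 - 1}}$ to find a single colour recurring cofinally, while at a limit stage one invokes the order-reinforcing property (Theorem~\ref{orderreinforcing}) together with Proposition~\ref{ordertypehomeomorphic} to amalgamate closed monochromatic copies obtained at lower levels.

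For the inductive step, I would factor $\beta = \beta_1 \cdot \beta'$ where $\beta_1 = \omega^{\omega^{\gamma_1}}$ and $\beta' = \omega^{\omega^{\gamma_2} + \cdots + \omega^{\gamma_n}}$, and partition $\beta$ into blocks $B_\xi = [\beta_1 \cdot \xi, \beta_1 \cdot (\xi + 1))$ for $\xi < \beta'$, each order-homeomorphic to $\beta_1$. Applying the base case inside each block gives a cofinal closed monochromatic copy $A_\xi \subseteq B_\xi$ of $\beta_1$; record its colour as $c'(\xi) \in 2$. The inductive hypothesis applied to $c' : \beta' \to 2$ yields $S' \subseteq \{2, \ldots, n\}$ together with $X' \subseteq (c')^{-1}(\{0\})$ and $Y' \subseteq (c')^{-1}(\{1\})$ (each either empty or cofinal in $\beta'$) with $X' \cong \beta'_{S'}$ and $Y' \cong \beta'_{\{2,\ldots,n\} \setminus S'}$. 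Set $S = \{1\} \cup S'$, assemble $X = \bigcup_{\xi \in X'} A_\xi$ (a subset of $c^{-1}(\{0\})$ of order type $\beta_1 \cdot \beta'_{S'} = \beta_S$), and for each $\xi \in Y'$ choose a representative $y_\xi \in B_\xi \cap c^{-1}(\{1\})$, setting $Y = \{y_\xi : \xi \in Y'\}$ (of order type $\beta'_{\{2, \ldots, n\} \setminus S'} = \beta_{\{1, \ldots, n\} \setminus S}$). Both $X$ and $Y$ are cofinal in $\beta$ since $X'$ and $Y'$ are cofinal in $\beta'$, and once we check that $X$ and $Y$ are closed subsets of $\beta$ Proposition~\ref{ordertypehomeomorphic} will deliver the required homeomorphism types.

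The hard part will be ensuring this closedness. The obstruction lies at the block-boundary ordinals $\beta_1 \cdot \xi$ for $\xi$ a limit of $X'$ or $Y'$: such a $\beta_1 \cdot \xi$ is a limit point of the lifted set and must therefore belong to it, which forces $c(\beta_1 \cdot \xi) = 0$ with $\beta_1 \cdot \xi$ appearing as the minimum of $A_\xi$ when $\xi \in X'$, and $c(\beta_1 \cdot \xi) = 1$ with $y_\xi = \beta_1 \cdot \xi$ when $\xi \in Y'$. To handle this I would preprocess before defining $c'$: apply the base case to the boundary colouring $\xi \mapsto c(\beta_1 \cdot \xi)$ of $\beta'$ to extract a cofinal closed copy of $\beta'$ on which this boundary colour is constant, and restrict the block analysis to that copy. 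After this restriction a single uniform boundary colour governs the lifted construction, and by pairing the choice of $S$ (whether or not to include index $1$) with this colour the resulting $X$ and $Y$ can be arranged to be closed in $\beta$ with the required homeomorphism types.
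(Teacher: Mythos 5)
First, a point of comparison: the paper does not prove this statement at all --- it is quoted as Weiss's theorem from Baumgartner's paper, and the whole point of the surrounding section is to use it as a black box. So your attempt is really competing with Weiss's original argument, and as it stands it has two genuine gaps. The first is the base case. For $n=1$ the statement is exactly the topological indivisibility of $\omega^{\omega^{\gamma_1}}$ (with a cofinal copy), which is the hard core of the whole theorem, and your sketch of it does not work as described: at a successor stage $\gamma_1=\gamma+1$, pigeonholing a cofinal family of copies of $\omega^{\omega^{\gamma}}$ and keeping those of one recurring colour produces a union homeomorphic to at most a countable topological sum of copies of $\omega^{\omega^{\gamma}}$, whose iterated Cantor--Bendixson derivatives vanish by stage $\omega^{\gamma}+1$; this is nowhere near $\omega^{\omega^{\gamma+1}}=\omega^{\omega^{\gamma}\cdot\omega}$, for which one needs same-coloured copies of $\omega^{\omega^{\gamma}\cdot k}$ for every $k$, arranged convergently. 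Making that induction close is precisely why Weiss's theorem is formulated for general sums $\omega^{\gamma_1}+\cdots+\omega^{\gamma_n}$: the $n$-ary statement is the induction hypothesis needed to get the $n=1$ case, so a strategy that derives the general case from an independently proved $n=1$ case is circular in spirit, and the difficulty has simply been pushed into the unproved base case.

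The second gap is in the inductive step. Hard-coding $S=\{1\}\cup S'$ already fails for the constant colouring $c\equiv 1$: there $X'=\emptyset$, yet $\beta_{\{1\}\cup S'}=\omega^{\omega^{\gamma_1}}\neq 0$, so which side receives the index $1$ must itself depend on the colouring rather than being fixed in advance. More seriously, the closedness problem you correctly identify is not resolved by your preprocessing: the boundary colouring $\xi\mapsto c(\beta_1\cdot\xi)$ is a $2$-colouring of $\beta'=\omega^{\omega^{\gamma_2}+\cdots+\omega^{\gamma_n}}$, which for $n\geq 3$ is not of the form $\omega^{\omega^{\delta}}$ and hence is not topologically indivisible (see item (1) of the introduction), so there is in general no cofinal closed copy of $\beta'$ on which the boundary colour is constant and ``the base case'' cannot be applied to it. Without boundary points of the correct colour, $X=\bigcup_{\xi\in X'}A_\xi$ is a topological sum of clopen copies of $\beta_1$: for example with $n=2$, $\gamma_1=\gamma_2=1$, such a union has every point of finite Cantor--Bendixson rank and so is not homeomorphic to $\beta_S=\omega^{\omega\cdot 2}$, whose $\omega$-th derivative is nonempty; the same problem afflicts $Y$, since representatives $y_\xi$ chosen off the boundary make $Y$ discrete at points that must be limits. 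Managing exactly these block-boundary limit points is the substantive content of Weiss's proof (note that the decomposition used there, visible in the lemma quoted before Lemma \ref{distinguishinglemma}, is the opposite of yours: large blocks of type $\omega^{\omega^{\gamma_1}+\cdots+\omega^{\gamma_{n-1}}}$ indexed by $\omega^{\omega^{\gamma_n}}$, peeling off the smallest exponent), and your final sentence asserts rather than proves that the construction can be arranged to be closed with the required homeomorphism types.
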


A careful reading of Baumgartner's proof reveals that ``homeomorphic'' can in fact be strengthened to ``order-homeomorphic''. Furthermore, we will be interested in colourings using more than 2 colours. It will therefore be more convenient to use this result in the following form.

\begin{corollary}\label{weisscorollary}
Let $\beta$ be as in Weiss's theorem, let $k$ be a positive integer and let $c:\beta\rightarrow k$. Then there exists a partition of $\left\{1,2,\dots,n\right\}$ into $k$ pieces $S_0,S_1,\dots,S_{k-1}$ and for each $i\in k$ a subset $X_i\subseteq c^{-1}\left(\left\{i\right\}\right)$ such that for all $i\in k$, $X_i$ is order-homeomorphic to $\beta_{S_i}$ and $X_i$ is either empty or cofinal in $\beta$.
\end{corollary}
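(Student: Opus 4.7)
The plan is to proceed by induction on $k$. The base case $k=1$ is trivial: take $S_0=\{1,2,\dots,n\}$ and $X_0=\beta$. For $k=2$ the statement is Weiss's theorem itself, in the order-homeomorphic form that the author notes can be extracted from a careful reading of Baumgartner's proof.

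For the inductive step with $k\geq 3$, given $c:\beta\to k$ I would merge all non-zero colours to obtain a $2$-colouring $c':\beta\to 2$, and apply Weiss's theorem to produce a partition $\{1,2,\dots,n\}=S\sqcup T$, a set $X\subseteq c^{-1}(\{0\})$ order-homeomorphic to $\beta_S$, and a set $Y\subseteq c^{-1}(\{1,2,\dots,k-1\})$ order-homeomorphic to $\beta_T$, each of $X$ and $Y$ being either empty or cofinal in $\beta$. Set $S_0=S$ and $X_0=X$. If $T=\emptyset$, then $Y=\emptyset$ too, and setting $S_i=X_i=\emptyset$ for $1\leq i\leq k-1$ completes the construction. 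Otherwise, writing $T=\{t_1<t_2<\dots<t_m\}$, the subsequence $\gamma_{t_1}\geq\gamma_{t_2}\geq\dots\geq\gamma_{t_m}$ is still weakly decreasing and countable, so $\beta_T=\omega^{\omega^{\gamma_{t_1}}+\omega^{\gamma_{t_2}}+\cdots+\omega^{\gamma_{t_m}}}$ is again of the exact syntactic form to which the corollary applies. Fix an order-homeomorphism $f:\beta_T\to Y$, pull back $c$ to a $(k-1)$-colouring $\widetilde c=c\circ f$ of $\beta_T$ (relabelling the colours $\{1,\dots,k-1\}$ as $\{0,\dots,k-2\}$ if desired), and apply the inductive hypothesis to produce a partition $T=T_1\sqcup T_2\sqcup\cdots\sqcup T_{k-1}$ and sets $Y_i\subseteq\widetilde c^{-1}(\{i\})$, each order-homeomorphic to $\beta_{T_i}$ and either empty or cofinal in $\beta_T$. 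Setting $S_i=T_i$ and $X_i=f(Y_i)$ for $1\leq i\leq k-1$ completes the construction.

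The verifications are routine. Since $f$ is an order-homeomorphism, $X_i$ is order-homeomorphic to $Y_i$ via $f|_{Y_i}$ and hence to $\beta_{S_i}$; the inclusion $X_i\subseteq c^{-1}(\{i\})$ is immediate from $Y_i\subseteq\widetilde c^{-1}(\{i\})$ and the definition of $\widetilde c$; and an order-preserving bijection between sets of ordinals sends cofinal subsets to cofinal subsets, so if $Y_i$ is cofinal in $\beta_T$ then $f(Y_i)$ is cofinal in $Y$, and $Y$ is in turn cofinal in $\beta$. The main obstacle, such as it is, is purely bookkeeping: one must insist throughout the induction on the order-homeomorphic strengthening (rather than bare homeomorphism) so that the order-theoretic notion of cofinality can be transported through $f$, and one must confirm that restricting the index set from $\{1,\dots,n\}$ to an arbitrary subset $T$ preserves the syntactic form $\omega^{\omega^{\gamma_1}+\cdots+\omega^{\gamma_n}}$ needed to reapply the corollary.
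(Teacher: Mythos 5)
Your proposal is correct and is essentially the paper's proof: the paper simply states that the corollary follows from the order-homeomorphic version of Weiss's theorem by induction on $k$, and your argument (merge the nonzero colours, apply Weiss's theorem, note that $\beta_T$ again has the required form, pull the colouring back through the order-homeomorphism and recurse, transporting cofinality via the order-preserving map) is precisely the intended induction with the routine details filled in.
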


\begin{proof}
This follows immediately from the ``order-homeomorphic'' version of Weiss's theorem by induction on $k$.
\end{proof}

To prove negative relations we will frequently consider colourings based on those of the form $\widetilde c\circ\operatorname{CB}$ for some $\widetilde c:\beta\rightarrow\kappa$, where $\beta$ is a non-zero ordinal. The following result is our key tool for analysing these colourings.

\begin{proposition}\label{rankcolouring}
Let $\alpha$ and $\eta$ be ordinals. Let $Y$ be a set of ordinals of order type $\alpha$, and let $X=\left\{x\in\eta:\operatorname{CB}\left(x\right)\in Y\right\}$. Then $X^{\left(\alpha\right)}=\emptyset$.
\end{proposition}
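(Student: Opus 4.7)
My plan is to prove by transfinite induction on $\gamma \leq \alpha$ the following strengthened statement. Enumerate $Y$ in increasing order as $(y_\delta)_{\delta < \alpha}$, and for each $\gamma \leq \alpha$ let $Y_{\geq\gamma} = \{y_\delta : \gamma \leq \delta < \alpha\}$ be the tail of $Y$ from position $\gamma$ onward (so $Y_{\geq 0} = Y$ and $Y_{\geq\alpha} = \emptyset$). The inductive claim is:
\[X^{(\gamma)} \subseteq \{x \in X : \operatorname{CB}(x) \in Y_{\geq\gamma}\}.\]
Applying this at $\gamma = \alpha$ immediately gives $X^{(\alpha)} = \emptyset$.

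The base case $\gamma = 0$ is immediate since $Y_{\geq 0} = Y$. For a non-zero limit $\gamma$, one has $X^{(\gamma)} = \bigcap_{\delta<\gamma} X^{(\delta)}$, and an elementary computation shows $\bigcap_{\delta<\gamma} Y_{\geq\delta} = Y_{\geq\gamma}$, so the inductive step reduces to intersecting the previous containments.

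The key step is the successor case. Suppose the claim holds for $\gamma$; I want to show that any $x \in X^{(\gamma)}$ with $\operatorname{CB}(x) = y_\gamma$ is isolated in $X^{(\gamma)}$. By Lemma \ref{cantorbendixson}, $x \in \eta^{(y_\gamma)} \setminus \eta^{(y_\gamma+1)}$, so $x$ is isolated in the subspace $\eta^{(y_\gamma)}$; pick an open neighbourhood $U$ of $x$ in $\eta$ with $U \cap \eta^{(y_\gamma)} = \{x\}$. By the inductive hypothesis, every point $x' \in X^{(\gamma)}$ satisfies $\operatorname{CB}(x') \in Y_{\geq\gamma}$, hence $\operatorname{CB}(x') \geq y_\gamma$, and so $x' \in \eta^{(y_\gamma)}$ by Lemma \ref{cantorbendixson} again. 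Therefore $U \cap X^{(\gamma)} \subseteq U \cap \eta^{(y_\gamma)} = \{x\}$, which shows $x$ is isolated in $X^{(\gamma)}$. Taking the derivative removes all such $x$, yielding $X^{(\gamma+1)} \subseteq \{x \in X : \operatorname{CB}(x) \in Y_{\geq\gamma+1}\}$.

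The only subtle point is ensuring that the notion of rank used in the inductive hypothesis (computed in $\eta$ via Cantor normal form) is correctly linked to the subspace derivatives of $X$; this is precisely what Lemma \ref{cantorbendixson} provides, and the successor step is where this link is used twice in tandem with the order enumeration of $Y$. Once this is set up, the rest of the proof is a mechanical transfinite induction.
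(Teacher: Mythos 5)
Your proof is correct and follows essentially the same route as the paper: the paper's proof is an induction on $\zeta\leq\alpha$ showing $X^{(\zeta)}=X\setminus X_\zeta$ with $X_\zeta$ defined from the initial segment of $Y$ of order type $\zeta$, which is exactly your tail formulation (you prove only the inclusion, which is all that is needed, and supply the details the paper leaves as ``easy''). The successor step via Lemma \ref{cantorbendixson} and the limit step via $\bigcap_{\delta<\gamma}Y_{\geq\delta}=Y_{\geq\gamma}$ are both sound.
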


\begin{proof}
For each $\zeta\leq\alpha$, let $Y_\zeta$ be the initial segment of $Y$ of order type $\zeta$ and let $X_\zeta=\left\{x\in\eta:\operatorname{CB}\left(x\right)\in Y_\zeta\right\}$. It is easy to prove by induction on $\zeta\leq\alpha$ that $X^{\left(\zeta\right)}=X\setminus X_\zeta$. Hence $X^{\left(\alpha\right)}=X\setminus X_\alpha=\emptyset$.
\end{proof}

We can now apply these two tools to prove Theorem \ref{link}, beginning with part \ref{linkpartsuccessor}.

\begin{theorem}\label{linksuccessor}
Let $\alpha_0,\alpha_1,\dots,\alpha_{k-1}\in\omega_1\setminus\left\{0\right\}$. Then
\[P^{top}\left(\omega^{\alpha_0}+1,\omega^{\alpha_1}+1,\dots,\omega^{\alpha_{k-1}}+1\right)=\omega^{\alpha_0\#\alpha_1\#\cdots\#\alpha_{k-1}}+1.\]
\end{theorem}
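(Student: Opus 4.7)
Let $\alpha = \alpha_0 \# \alpha_1 \# \cdots \# \alpha_{k-1}$. The plan is to prove the upper and lower bounds separately: the lower bound via a Cantor--Bendixson rank colouring, in the spirit of Proposition \ref{simple}, and the upper bound via Corollary \ref{weisscorollary}. As preliminary setup I would write $\alpha$ in expanded Cantor normal form as $\alpha = \omega^{\gamma_1} + \omega^{\gamma_2} + \cdots + \omega^{\gamma_n}$ with $\gamma_1 \geq \gamma_2 \geq \cdots \geq \gamma_n$, obtained by merging and sorting the expanded CNFs of the $\alpha_i$'s. This gives a partition $\{1, 2, \dots, n\} = T_0 \sqcup T_1 \sqcup \cdots \sqcup T_{k-1}$ with $\alpha_i = \sum_{r \in T_i} \omega^{\gamma_r}$ for each $i$, and a corresponding partition of the ordinal $\alpha$ into sets $A_0, A_1, \dots, A_{k-1}$ of order types $\alpha_0, \alpha_1, \dots, \alpha_{k-1}$ respectively (place the block of $\alpha$ of length $\omega^{\gamma_r}$ into $A_i$ whenever $r \in T_i$).

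For the lower bound, I would define $c : \omega^\alpha \to k$ by $c(x) = i$ iff $\operatorname{CB}(x) \in A_i$. By Proposition \ref{rankcolouring} applied to each $A_i$, we have $\bigl(c^{-1}(i)\bigr)^{(\alpha_i)} = \emptyset$, whereas $(\omega^{\alpha_i}+1)^{(\alpha_i)} = \{\omega^{\alpha_i}\} \neq \emptyset$ by Lemma \ref{cantorbendixson}. Since iterated Cantor--Bendixson derivatives are monotone under taking subspaces, $c^{-1}(i)$ contains no homeomorphic copy of $\omega^{\alpha_i}+1$, so $\omega^\alpha \nrightarrow (top\,\omega^{\alpha_i}+1)^1_{i \in k}$.

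For the upper bound, given $c : \omega^\alpha + 1 \to k$ I would apply Corollary \ref{weisscorollary} to $c\restriction\omega^\alpha$, noting that $\omega^\alpha = \omega^{\omega^{\gamma_1} + \cdots + \omega^{\gamma_n}}$. This produces a partition $S_0, S_1, \dots, S_{k-1}$ of $\{1, 2, \dots, n\}$ and, for each $i$, a subset $X_i \subseteq c^{-1}(i) \cap \omega^\alpha$ that is order-homeomorphic to $\omega^{\sum_{r \in S_i} \omega^{\gamma_r}}$ (empty when $S_i = \emptyset$) and, when non-empty, cofinal in $\omega^\alpha$. Setting $j = c(\omega^\alpha)$, the key claim is the dichotomy: \emph{either} $\alpha_i < \sum_{r \in S_i} \omega^{\gamma_r}$ for some $i$, in which case $X_i$ itself contains an order-homeomorphic initial segment of order type $\omega^{\alpha_i}+1$; \emph{or} $\alpha_j \leq \sum_{r \in S_j} \omega^{\gamma_r}$, in which case $X_j \cup \{\omega^\alpha\}$ is order-homeomorphic to $\omega^{\sum_{r \in S_j} \omega^{\gamma_r}}+1$ and so contains a copy of $\omega^{\alpha_j}+1$. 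To establish the dichotomy I would invoke the strict monotonicity of the natural sum: if neither alternative held then $\alpha_i \geq \sum_{r \in S_i} \omega^{\gamma_r}$ for all $i$ with strict inequality at $i=j$, and natural-summing would give $\alpha > \sum_{r=1}^n \omega^{\gamma_r} = \alpha$, a contradiction.

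The main obstacle will be the dichotomy argument in the upper bound: correctly identifying the role of the colour $j$ assigned to the top point $\omega^\alpha$ and aligning the Weiss partition $(S_i)$ against the natural-sum partition $(T_i)$ via a single strict-monotonicity inequality. The lower bound and the algebraic unpacking of natural sums are essentially routine once Proposition \ref{rankcolouring} is in hand.
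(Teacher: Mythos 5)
Your proposal is correct and follows essentially the same route as the paper: the lower bound via the Cantor--Bendixson rank colouring and Proposition \ref{rankcolouring}, and the upper bound via Corollary \ref{weisscorollary}, attaching the top point $\omega^\alpha$ to the cofinal set $X_j$ of its own colour. Your dichotomy via strict monotonicity of the natural sum is just a mild repackaging of the paper's observation that otherwise $\beta_{S_i}=\omega^{\alpha_i}$ for all $i$, so there is no substantive difference.
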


\begin{proof}
Write $\alpha_0\#\alpha_1\#\cdots\#\alpha_{k-1}=\delta=\omega^{\gamma_1}+\omega^{\gamma_2}+\cdots+\omega^{\gamma_n}$ with $\gamma_1\geq\gamma_2\geq\dots\geq\gamma_n$, and write $\beta=\omega^\delta$.

To see that $\beta\nrightarrow\left(top\,\omega^{\alpha_0}+1,\omega^{\alpha_1}+1,\dots,\omega^{\alpha_{k-1}}+1\right)^1$, first observe that by definition of the natural sum, there is a partition of $\left\{1,2,\dots,n\right\}$ into $k$ pieces $S_0,S_1,\dots,S_{k-1}$ such that for all $i\in k$, $\alpha_i=\sum_{j\in S_i}\omega^{\gamma_j}$. Now define a colouring $c:\beta\rightarrow k$ as follows. For each $i\in k$, set $c\left(x\right)=i$ if and only if
\[\omega^{\gamma_1}+\omega^{\gamma_2}+\cdots+\omega^{\gamma_{j-1}}\leq\operatorname{CB}\left(x\right)<\omega^{\gamma_1}+\omega^{\gamma_2}+\cdots+\omega^{\gamma_j}\]
for some $j\in S_i$. Observe that $c^{-1}\left(\left\{i\right\}\right)=\left\{x\in\beta:\operatorname{CB}\left(x\right)\in Y_i\right\}$ for some set $Y_i$ of ordinals of order type $\alpha_i$. Thus by Proposition \ref{rankcolouring}, $c^{-1}\left(\left\{i\right\}\right)^{\left(\alpha_i\right)}=\emptyset$, whereas $\left(\omega^{\alpha_i}+1\right)^{\left(\alpha_i\right)}=\left\{\omega^{\alpha_i}\right\}$. Hence $c^{-1}\left(\left\{i\right\}\right)$ cannot contain a homeomorphic copy of $\omega^{\alpha_i}+1$.

To see that $\beta+1\rightarrow\left(top\,\omega^{\alpha_0}+1,\omega^{\alpha_1}+1,\dots,\omega^{\alpha_{k-1}}+1\right)^1$, let $c:\beta+1\rightarrow k$. Choose $S_0,S_1,\dots,S_{k-1}\subseteq\left\{1,2,\dots,n\right\}$ and $X_0,X_1,\dots,X_{k-1}\subseteq\beta$ as in Corollary \ref{weisscorollary}. If $\beta_{S_i}>\omega^{\alpha_i}$ for some $i\in k$, then we are done. So we may assume $\beta_{S_i}\leq\omega^{\alpha_i}$ for all $i\in k$. But then we must in fact have $\beta_{S_i}=\omega^{\alpha_i}$ for all $i\in k$, else $\beta<\omega^{\alpha_0\#\alpha_1\#\cdots\#\alpha_{k-1}}$. To finish, suppose $c\left(\beta\right)=j$. Then since $X_j$ is cofinal in $\beta$, $X_j\cup\left\{\beta\right\}$ is a homeomorphic copy of $\omega^{\alpha_j}+1$ in colour $j$.
\end{proof}

The proof of part \ref{linkpartpowers} of Theorem \ref{link} is similar but a little more complicated as it makes use of the Milner--Rado sum. We make use of the fact that $P^{ord}\left(\alpha_0,\alpha_1,\dots,\alpha_{k-1}\right)=\alpha_0\odot\alpha_1\odot\cdots\odot\alpha_{k-1}$ by using the first expression to prove the negative relation and the second expression to prove the positive relation.

\begin{theorem}\label{linkpowers}
Let $\alpha_0,\alpha_1,\dots,\alpha_{k-1}\in\omega_1\setminus\left\{0\right\}$. Then
\[P^{top}\left(\omega^{\alpha_0},\omega^{\alpha_1},\dots,\omega^{\alpha_{k-1}}\right)=\omega^{\alpha_0\odot\alpha_1\odot\cdots\odot\alpha_{k-1}}.\]
\end{theorem}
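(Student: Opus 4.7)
The plan is to mirror the proof of Theorem \ref{linksuccessor}, using Theorem \ref{milnerrado} to identify $\delta:=\alpha_0\odot\alpha_1\odot\cdots\odot\alpha_{k-1}$ with $P^{ord}\left(\alpha_0,\alpha_1,\ldots,\alpha_{k-1}\right)$. This lets me use the Milner--Rado definition of $\delta$ for the positive direction and the existence of bad non-topological colourings of all ordinals below $\delta$ for the negative direction.

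For the positive direction $\omega^\delta\rightarrow\left(top\,\omega^{\alpha_i}\right)^1_{i\in k}$, take any colouring $c:\omega^\delta\to k$, write $\delta=\omega^{\gamma_1}+\omega^{\gamma_2}+\cdots+\omega^{\gamma_n}$ with $\gamma_1\geq\gamma_2\geq\cdots\geq\gamma_n$ (Cantor normal form, with repeated exponents allowed), and apply Corollary \ref{weisscorollary}. This furnishes a partition $S_0,S_1,\ldots,S_{k-1}$ of $\left\{1,2,\ldots,n\right\}$ and subsets $X_i\subseteq c^{-1}\left(\left\{i\right\}\right)$ order-homeomorphic to $\omega^{\sum_{j\in S_i}\omega^{\gamma_j}}$ (interpreted as $0$ when $S_i=\emptyset$). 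Setting $\tilde\alpha_i=\sum_{j\in S_i}\omega^{\gamma_j}$, commutativity of the natural sum gives $\tilde\alpha_0\#\tilde\alpha_1\#\cdots\#\tilde\alpha_{k-1}=\delta$; the definition of $\odot$ then precludes $\tilde\alpha_i<\alpha_i$ for every $i$, so some $X_i$ contains a copy of $\omega^{\alpha_i}$ as an initial segment.

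For the negative direction, it suffices to exhibit a bad colouring of $\omega^\epsilon\cdot m$ for every $\epsilon<\delta$ and every $m\in\omega\setminus\left\{0\right\}$, since each $\gamma<\omega^\delta$ is an initial segment (hence a subspace) of some such $\omega^\epsilon\cdot m$. Since $\epsilon<\delta=P^{ord}$, fix a non-topological bad colouring $\tilde c:\epsilon\to k$, so that $Y_i:=\tilde c^{-1}\left(\left\{i\right\}\right)$ has order type $\alpha_i'<\alpha_i$. Define $c:\omega^\epsilon\cdot m\to k$ by $c\left(x\right)=\tilde c\left(\operatorname{CB}\left(x\right)\right)$ when $\operatorname{CB}\left(x\right)<\epsilon$, and $c\left(x\right)=0$ on the $m-1$ points $\omega^\epsilon,\omega^\epsilon\cdot 2,\ldots,\omega^\epsilon\cdot\left(m-1\right)$ of Cantor--Bendixson rank $\epsilon$.

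For each $i\neq 0$, $c^{-1}\left(\left\{i\right\}\right)=\left\{x:\operatorname{CB}\left(x\right)\in Y_i\right\}$, so Proposition \ref{rankcolouring} gives $c^{-1}\left(\left\{i\right\}\right)^{\left(\alpha_i'\right)}=\emptyset$; since $\left(\omega^{\alpha_i}\right)^{\left(\alpha_i'\right)}$ is infinite, no subspace of $c^{-1}\left(\left\{i\right\}\right)$ is homeomorphic to $\omega^{\alpha_i}$. The main obstacle is the colour $0$ case: here $c^{-1}\left(\left\{0\right\}\right)=A\cup B$ with $A=\left\{x:\operatorname{CB}\left(x\right)\in Y_0\right\}$ and $B=\left\{\omega^\epsilon\cdot j:1\leq j\leq m-1\right\}$, and the extra limit points in $B$ could in principle inflate the derivative structure. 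I would argue by induction on $\zeta\leq\alpha_0'$ that $\left(A\cup B\right)^{\left(\zeta\right)}=A^{\left(\zeta\right)}\cup B$, using Proposition \ref{rankcolouring} for $A$ together with the observations that each $b\in B$ is a limit point of $A^{\left(\zeta\right)}$ in $\omega^\epsilon\cdot m$ whenever $A^{\left(\zeta\right)}\neq\emptyset$, and that $B$ is discrete in $\omega^\epsilon\cdot m$. Hence $c^{-1}\left(\left\{0\right\}\right)^{\left(\alpha_0'\right)}=B$ is finite, whereas $\left(\omega^{\alpha_0}\right)^{\left(\alpha_0'\right)}$ is infinite, furnishing the required non-embedding.
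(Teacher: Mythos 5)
Your proposal is correct and follows essentially the same route as the paper: the positive direction via Corollary \ref{weisscorollary} together with the definition of the Milner--Rado sum, and the negative direction by reducing to ordinals of the form $\omega^\epsilon\cdot m$ (the paper uses $\omega^\epsilon\cdot m+1$) and lifting a bad non-topological colouring of $\epsilon<\delta=P^{ord}$ through $\operatorname{CB}$, controlling the finitely many top-rank points with Proposition \ref{rankcolouring}. Your extra induction for the colour-$0$ class is more than is needed (containment of the $\alpha_0'$-th derivative in the finite exceptional set suffices), but it is sound.
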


\begin{proof}
First recall from Theorem \ref{milnerrado} that $P^{ord}\left(\alpha_0,\alpha_1,\dots,\alpha_{k-1}\right)=\alpha_0\odot\alpha_1\odot\cdots\odot\alpha_{k-1}$. Write $\delta$ for their common value, write $\delta=\omega^{\gamma_1}+\omega^{\gamma_2}+\cdots+\omega^{\gamma_n}$ with $\gamma_1\geq\gamma_2\geq\dots\geq\gamma_n$, and write $\beta=\omega^\delta$.

Suppose $\zeta<\beta$. To see that $\zeta\nrightarrow\left(top\,\omega^{\alpha_0},\omega^{\alpha_1},\dots,\omega^{\alpha_{k-1}}\right)^1$, first observe that $\zeta\leq\omega^\eta\cdot m+1$ for some $\eta<\delta$ and some $m\in\omega$, so it is sufficient to consider the case in which $\zeta=\omega^\eta\cdot m+1$. Since $\eta<P^{ord}\left(\alpha_0,\alpha_1,\dots,\alpha_{k-1}\right)$, there is a colouring $\widetilde c:\eta\rightarrow k$ such that for all $i\in k$, the order type of $\widetilde c^{-1}\left(\left\{i\right\}\right)$ is $\widetilde\alpha_i<\alpha_i$. Let $c:\zeta\rightarrow k$ be a colouring with $c\left(x\right)=\widetilde c\left(\operatorname{CB}\left(x\right)\right)$ for all $x\in\zeta\setminus\left\{\omega^\eta,\omega^\eta\cdot 2,\dots,\omega^\eta\cdot m\right\}$ (it doesn't matter how the points $\omega^\eta,\omega^\eta\cdot 2,\dots,\omega^\eta\cdot m$ are coloured). By Proposition \ref{rankcolouring}, $c^{-1}\left(\left\{i\right\}\right)^{\left(\widetilde\alpha_i\right)}\subseteq\left\{\omega^\eta,\omega^\eta\cdot 2,\dots,\omega^\eta\cdot m\right\}$ for all $i\in k$, whereas $\left(\omega^{\alpha_i}\right)^{\left(\widetilde\alpha_i\right)}$ is infinite since $\widetilde\alpha_i<\alpha_i$. Hence $c^{-1}\left(\left\{i\right\}\right)$ cannot contain a homeomorphic copy of $\omega^{\alpha_i}$.

To see that $\beta\rightarrow\left(top\,\omega^{\alpha_0},\omega^{\alpha_1},\dots,\omega^{\alpha_{k-1}}\right)^1$, let $c:\beta\rightarrow k$. Choose $S_0,S_1,\dots,S_{k-1}\subseteq\left\{1,2,\dots,n\right\}$ and $X_0,X_1,\dots,X_{k-1}\subseteq\beta$ as in Corollary \ref{weisscorollary}. If $\beta_{S_i}\geq\omega^{\alpha_i}$ for some $i\in k$, then we are done, so suppose for contradiction that $\beta_{S_i}<\omega^{\alpha_i}$ for all $i\in k$. Write $\widetilde\alpha_i=\sum_{j\in S_i}\omega^{\gamma_i}$, so that $\omega^{\widetilde\alpha_i}=\beta_{S_i}$ and $\widetilde\alpha_0\#\widetilde\alpha_1\#\cdots\#\widetilde\alpha_{k-1}=\beta$ by definition. Then since $\beta_{S_i}<\omega^{\alpha_i}$ for all $i\in k$ and $\beta=\alpha_0\odot\alpha_1\odot\cdots\odot\alpha_{k-1}$, we have $\widetilde\alpha_i<\alpha_i$ for all $i\in k$ while $\widetilde\alpha_0\#\widetilde\alpha_1\#\cdots\#\widetilde\alpha_{k-1}=\alpha_0\odot\alpha_1\odot\cdots\odot\alpha_{k-1}$, contrary to the definition of the Milner--Rado sum.
\end{proof}

This completes the proof of Theorem \ref{link}, which provides us with the topological pigeonhole numbers for finite sequences of countable ordinals when either each ordinal is a power of $\omega$ or each ordinal is a power of $\omega$ plus 1.

Our next result generalises Theorem \ref{linkpowers} by considering mixtures of such ordinals including at least one power of $\omega$. Using monotonicity, this will provide us with the topological pigeonhole numbers for all finite sequences of countable ordinals in which one of the ordinals is a power of $\omega$, thereby completing case \ref{finiteofcountablepower} of the principle. The result essentially says that in this case, the topological pigeonhole number is the same as if the other ordinals were ``rounded up'' to the next largest power of $\omega$.

The proof involves proving two negative relations, the first of which uses ideas from Theorem \ref{linksuccessor} and the second of which uses ideas from Theorem \ref{linkpowers}.

\begin{theorem}\label{powersroundup}
Let $\alpha_0,\alpha_1,\dots,\alpha_l,\delta_{l+1},\delta_{l+2},\dots,\delta_{k-1}\in\omega_1\setminus\left\{0\right\}$, where $l\in k$. Then
\begin{align*}
&P^{top}\left(\omega^{\alpha_0},\omega^{\alpha_1},\dots,\omega^{\alpha_l},\omega^{\delta_{l+1}}+1,\omega^{\delta_{l+2}}+1,\dots,\omega^{\delta_{k-1}}+1\right)\\
&=\omega^{\alpha_0\odot\alpha_1\odot\cdots\alpha_l\odot\left(\delta_{l+1}+1\right)\odot\left(\delta_{l+2}+1\right)\odot\cdots\odot\left(\delta_{k-1}+1\right)}.
\end{align*}
\end{theorem}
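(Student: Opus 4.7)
The plan is to derive the positive relation from Theorem \ref{linkpowers} by monotonicity, and to establish the negative relation at each $\zeta<\omega^\delta$---where $\delta=\alpha_0\odot\cdots\odot\alpha_l\odot\left(\delta_{l+1}+1\right)\odot\cdots\odot\left(\delta_{k-1}+1\right)$---via a single Cantor--Bendixson colouring whose verification splits into a Theorem \ref{linkpowers}-style argument and a Theorem \ref{linksuccessor}-style argument.

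For the positive relation, applying Theorem \ref{linkpowers} to the ``rounded-up'' sequence $\left(\omega^{\alpha_0},\ldots,\omega^{\alpha_l},\omega^{\delta_{l+1}+1},\ldots,\omega^{\delta_{k-1}+1}\right)$ yields $\omega^\delta\rightarrow\left(top\,\omega^{\alpha_0},\ldots,\omega^{\alpha_l},\omega^{\delta_{l+1}+1},\ldots,\omega^{\delta_{k-1}+1}\right)^1$, since its Milner--Rado sum exponent is exactly $\delta$. Because $\omega^{\delta_j}+1$ is order-homeomorphic to an initial segment of $\omega^{\delta_j+1}$, monotonicity of $P^{top}$ in the targets immediately yields the desired positive relation.

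For the negative relation, fix $\zeta<\omega^\delta$; then $\zeta\leq\omega^\eta\cdot m+1$ for some $\eta<\delta$ and $m\in\omega$, so it suffices to colour $\omega^\eta\cdot m+1$ badly. Since $\eta<\delta$, the definition of the Milner--Rado sum supplies $\widetilde\alpha_i<\alpha_i$ (for $i\leq l$) and $\widetilde\delta_j\leq\delta_j$ (for $j>l$) with $\widetilde\alpha_0\#\cdots\#\widetilde\alpha_l\#\widetilde\delta_{l+1}\#\cdots\#\widetilde\delta_{k-1}=\eta$. Partitioning $\eta$ into consecutive blocks of these order types gives $\widetilde c:\eta\rightarrow k$, and we define $c:\omega^\eta\cdot m+1\rightarrow k$ by $c(x)=\widetilde c\left(\operatorname{CB}(x)\right)$ whenever $\operatorname{CB}(x)<\eta$ and $c\left(\omega^\eta\cdot r\right)=0$ for $1\leq r\leq m$. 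Routing every exceptional point (those of CB-rank $\eta$) to colour $0$---a pure-power-of-$\omega$ colour, which exists because $l\in k$---is the key design choice.

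Verification proceeds by cases. For each colour $i\leq l$, Proposition \ref{rankcolouring} forces $c^{-1}(\{i\})^{(\widetilde\alpha_i)}$ to lie in the finite exceptional set, whereas $(\omega^{\alpha_i})^{(\widetilde\alpha_i)}$ is infinite because $\widetilde\alpha_i<\alpha_i$; this is the Theorem \ref{linkpowers}-style argument, ruling out $\omega^{\alpha_i}$ in colour $i$. For colour $j>l$, if $\widetilde\delta_j<\delta_j$ the same argument rules out $\omega^{\delta_j}+1$. If instead $\widetilde\delta_j=\delta_j$, then Proposition \ref{rankcolouring} gives $c^{-1}(\{j\})^{(\delta_j)}=\emptyset$ on the lifting part, and since no exceptional point is coloured $j$ we have $c^{-1}(\{j\})^{(\delta_j)}=\emptyset$ entirely; meanwhile $(\omega^{\delta_j}+1)^{(\delta_j)}=\{\omega^{\delta_j}\}$ is non-empty, giving the Theorem \ref{linksuccessor}-style contradiction. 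The main obstacle is making these two obstruction styles coexist in a single colouring: dumping all exceptional points into colour $0$ is safe precisely because colour $0$'s obstruction relies on $(\omega^{\alpha_0})^{(\widetilde\alpha_0)}$ being infinite, which tolerates the addition of finitely many extra points to its Cantor--Bendixson derivative, whereas the ``equality'' cases $\widetilde\delta_j=\delta_j$ would be broken by even a single stray exceptional point of colour $j$.
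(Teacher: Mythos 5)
Your overall strategy is the paper's: the upper bound is obtained exactly as in the paper (Theorem \ref{linkpowers} applied to the rounded-up sequence, then monotonicity in the targets), and the lower bound is a Cantor--Bendixson lift of a bad colouring of the exponent, with the finitely many points of rank $\eta$ routed to a pure-power colour and the verification split into the ``finite versus infinite derivative'' obstruction for the powers and the ``empty versus nonempty derivative'' obstruction for the successors. If anything, your treatment is more uniform: the paper splits the lower bound into two cases --- all of $\alpha_0,\dots,\alpha_l$ successors, where it recycles the block colouring from Theorem \ref{linksuccessor}, versus some $\alpha_j$ a limit, where $\delta$ is a limit so $\zeta\leq\omega^\eta$ and no exceptional points arise --- whereas your single colouring of $\omega^\eta\cdot m+1$ handles both cases at once, using Proposition \ref{rankcolouring} in the same way the paper does.

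The one step that fails as written is your construction of $\widetilde c$: from $\eta=\widetilde\alpha_0\#\cdots\#\widetilde\alpha_l\#\widetilde\delta_{l+1}\#\cdots\#\widetilde\delta_{k-1}$ you cannot in general partition $\eta$ into \emph{consecutive} blocks of these order types, because an ordinary sum of the pieces (in any order) can be strictly smaller than their natural sum; for instance $\left(\omega+1\right)\#\left(\omega+1\right)=\omega\cdot 2+2$ while $\left(\omega+1\right)+\left(\omega+1\right)=\omega\cdot 2+1$, so consecutive blocks leave a point of $\eta$ uncovered, and colouring the leftover arbitrarily could spoil the order-type bounds. The fact you need is nevertheless true: one can realise the natural sum by an \emph{interleaved} partition, distributing the Cantor normal form monomials of $\eta$ among the colours exactly as in the proof of Theorem \ref{linksuccessor}. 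The cleanest repair, and what the paper does, is to bypass the realisation issue entirely: by Theorem \ref{milnerrado}, $\delta=P^{ord}\left(\alpha_0,\dots,\alpha_l,\delta_{l+1}+1,\dots,\delta_{k-1}+1\right)$, so $\eta<\delta$ directly gives a colouring $\widetilde c:\eta\rightarrow k$ whose $i$-th class has order type $<\alpha_i$ for $i\leq l$ and $\leq\delta_j$ for $j>l$, which is all your verification uses. With that substitution your argument goes through.
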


\begin{proof}
Write $P$ for the left-hand side and $\beta$ for the right-hand side. Clearly $P\leq\beta$ by Theorem \ref{linkpowers} and monotonicity, so we prove that $P\geq\beta$.

Suppose first that $\alpha_i$ is a successor ordinal for all $i\in\left\{0,1,\dots,l\right\}$, say $\alpha_i=\delta_i+1$. Then by Theorem \ref{milnerrado}, $\beta=\omega^{\delta_0\#\delta_1\#\cdots\#\delta_{k-1}+1}$. Suppose $\zeta<\beta$. We will show that
\[\zeta\nrightarrow\left(top\,\omega^{\delta_0+1},\omega^{\delta_1}+1,\omega^{\delta_2}+1,\dots,\omega^{\delta_{k-1}}+1\right)^1,\]
which suffices. Write $\delta=\delta_0\#\delta_1\#\cdots\#\delta_{k-1}$, and observe first that $\zeta\leq\omega^\delta\cdot m+1$ for some $m\in\omega$, so it is sufficient to consider the case in which $\zeta=\omega^\delta\cdot m+1$. Next recall from the proof of Theorem \ref{linksuccessor} that there is a colouring $d:\omega^\delta\rightarrow k$ with the property that $d^{-1}\left(\left\{i\right\}\right)^{\left(\delta_i\right)}=\emptyset$ for all $i\in k$. Now define a colouring $c:\zeta\rightarrow k$ by
\[
c\left(x\right)=
\begin{cases}
d\left(y\right),&\text{if $x=\omega^\delta\cdot l+y$ with $l\in\omega$ and $0<y<\omega^\delta$}\\
0,&\text{if $x\in\left\{0,\omega^\delta,\omega^\delta\cdot 2,\dots,\omega^\delta\cdot m\right\}$}.
\end{cases}
\]
Then for all $i\in\left\{1,2,\dots,k-1\right\}$, $c^{-1}\left(\left\{i\right\}\right)^{\left(\delta_i\right)}=\emptyset$, whereas $\left(\omega^{\delta_i}+1\right)^{\left(\delta_i\right)}=\left\{\omega^{\delta_i}\right\}$, so $c^{-1}\left(\left\{i\right\}\right)$ cannot contain a homeomorphic copy of $\omega^{\delta_i}+1$. On the other hand, $c^{-1}\left(\left\{0\right\}\right)^{\left(\delta_0\right)}\subseteq\left\{\omega^\delta,\omega^\delta\cdot 2,\dots,\omega^\delta\cdot m\right\}$, whereas $\left(\omega^{\delta_0+1}\right)^{\left(\delta_0\right)}$ is infinite, so $c^{-1}\left(\left\{0\right\}\right)$ cannot contain a homeomorphic copy of $\omega^{\delta_0+1}$. This completes the proof for this case.

Suppose instead that $\alpha_j$ is a limit ordinal for some $j\in\left\{0,1,\dots,l\right\}$. Write $\beta=\omega^\delta$. Then by Theorem \ref{milnerrado}, $\delta$ is a limit ordinal. This observation enables us to complete the proof using simpler version of the argument from Theorem \ref{linkpowers}. Suppose $\zeta<\beta$. We will show that
\[\zeta\nrightarrow\left(top\,\omega^{\alpha_0},\omega^{\alpha_1},\dots,\omega^{\alpha_l},\omega^{\delta_{l+1}}+1,\omega^{\delta_{l+2}}+1,\dots,\omega^{\delta_{k-1}}+1\right)^1.\]
Observe first that since $\delta$ is a limit ordinal, $\zeta\leq\omega^\eta$ for some $\eta<\delta$, so it is sufficient to consider the case in which $\zeta=\omega^\eta$. Write $\alpha_i=\delta_i+1$ for all $i\in\left\{l+1,l+2,\dots,k-1\right\}$, and recall from theorem \ref{milnerrado} that $\delta=P^{ord}\left(\alpha_0,\alpha_1,\dots,\alpha_{k-1}\right)$. Since $\eta<\delta$, there is a colouring $\widetilde c:\eta\rightarrow k$ such that for all $i\in k$, the order type of $\widetilde c^{-1}\left(\left\{i\right\}\right)$ is $\widetilde\alpha_i<\alpha_i$. Define a colouring $c:\zeta\rightarrow k$ by $c=\widetilde c\circ\operatorname{CB}$. By Proposition \ref{rankcolouring}, $c^{-1}\left(\left\{i\right\}\right)^{\left(\widetilde\alpha_i\right)}=\emptyset$ for all $i\in k$. However, $\left(\omega^{\alpha_i}\right)^{\left(\widetilde\alpha_i\right)}$ is infinite for all $i\in\left\{0,1,\dots,l\right\}$, and $\left(\omega^{\delta_i}+1\right)^{\left(\widetilde\alpha_i\right)}\supseteq\left\{\omega^{\delta_i}\right\}$ for all $i\in\left\{l+1,l+2,\dots,k-1\right\}$. Hence $c^{-1}\left(\left\{i\right\}\right)$ cannot contain a homeomorphic copy of $\omega^{\alpha_i}$ (if $i\in\left\{0,1,\dots,l\right\}$) or $\omega^{\delta_i}+1$ (if $i\in\left\{l+1,l+2,\dots,k-1\right\}$).
\end{proof}

Next we move beyond powers of $\omega$ and powers of $\omega$ plus 1 to consider ordinals of the form $\omega^\alpha\cdot m+1$ with $\alpha\in\omega_1\setminus\left\{0\right\}$ and $m\in\omega\setminus\left\{0\right\}$. At this point considerations from the finite pigeonhole principle come into play.

At the same time we will also consider finite ordinals, since they behave in a similar fashion: just as $\omega^\alpha\cdot m+1$ is homeomorphic to the topological disjoint union of $m$ copies of $\omega^\alpha+1$, so $m\in\omega$ is homeomorphic to the topological disjoint union of $m$ copies of $1$. In order to consider both forms of ordinal at the same time we therefore make the following definition.

\begin{definition}
Let $\alpha$ be an ordinal and $m\in\omega\setminus\left\{0\right\}$. We define
\[
\overline\omega\left[\alpha,m\right]=
\begin{cases}
\omega^\alpha\cdot m+1,&\text{if $\alpha>0$}\\
m,&\text{if $\alpha=0$}.
\end{cases}
\]
\end{definition}

The following result deals with finite sequences of countable ordinals of the form $\overline\omega\left[\alpha,m\right]$. It generalises both Theorem \ref{linksuccessor} and the finite pigeonhole principle, and the proof essentially combines these two theorems.

\begin{theorem}\label{simplemultiples}
Let $\alpha_0,\alpha_1,\dots,\alpha_{k-1}\in\omega_1$ and $m_1,m_2,\dots,m_{k-1}\in\omega\setminus\left\{0\right\}$. Then
\[P^{top}\left(\overline\omega\left[\alpha_0,m_0\right],\overline\omega\left[\alpha_1,m_1\right],\dots,\overline\omega\left[\alpha_{k-1},m_{k-1}\right]\right)=\overline\omega\left[\alpha,m\right],\]
where $\alpha=\alpha_0\#\alpha_1\#\cdots\#\alpha_{k-1}$ and $m=\sum_{i=0}^{k-1}\left(m_i-1\right)+1$.
\end{theorem}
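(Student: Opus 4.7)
The plan is to separate the degenerate case $\alpha=0$, in which all $\alpha_i=0$ and the result is just the ordinary finite pigeonhole principle $\sum_{i=0}^{k-1}(m_i-1)+1\to(m_i)^1_{i\in k}$ noted in the preliminaries, from the main case $\alpha>0$, where the strategy is to layer the Weiss-theoretic argument of Theorem \ref{linksuccessor} on top of a finite pigeonhole on the rank-$\alpha$ points. By Proposition \ref{pseudosummary} it would even suffice to prove $\omega^\alpha\cdot m+1\leq P^{top}<\omega^\alpha\cdot(m+1)$, but in fact the two bounds will be proved directly.

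For the lower bound $P^{top}\geq\omega^\alpha\cdot m+1$ when $\alpha>0$, I would construct a colouring $c:\omega^\alpha\cdot m\to k$ as follows. Write $\alpha=\omega^{\gamma_1}+\cdots+\omega^{\gamma_n}$ in expanded form, and use the decomposition $\alpha=\alpha_0\#\cdots\#\alpha_{k-1}$ to partition $\{1,\dots,n\}$ into $T_0,\dots,T_{k-1}$ with $\alpha_i=\sum_{j\in T_i}\omega^{\gamma_j}$, yielding the colouring $\widetilde{c}:\alpha\to k$ from the proof of Theorem \ref{linksuccessor} for which $Y_i:=\widetilde{c}^{-1}(\{i\})$ has order type $\alpha_i$. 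Next, partition the $m-1$ points of Cantor--Bendixson rank $\alpha$ in $\omega^\alpha\cdot m$ into sets $A_0,\dots,A_{k-1}$ with $|A_i|=m_i-1$---possible precisely because $\sum_i(m_i-1)=m-1$---and define $c(x)=\widetilde{c}(\operatorname{CB}(x))$ whenever $\operatorname{CB}(x)<\alpha$, together with $c(\omega^\alpha\cdot j)=i$ when $\omega^\alpha\cdot j\in A_i$. Writing $B_i=\{x:\operatorname{CB}(x)\in Y_i\}$, Proposition \ref{rankcolouring} gives $B_i^{(\alpha_i)}=\emptyset$, and a straightforward induction using that $A_i$ is finite yields $c^{-1}(\{i\})^{(\alpha_i)}\subseteq A_i$. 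Since any embedded copy of $\overline\omega[\alpha_i,m_i]$ has $m_i$ points of intrinsic Cantor--Bendixson rank $\alpha_i$ (by Lemma \ref{cantorbendixson}), which must all lie in $c^{-1}(\{i\})^{(\alpha_i)}$ by monotonicity of the derivative, and $|A_i|<m_i$, no colour class contains such a copy.

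For the upper bound $P^{top}\leq\omega^\alpha\cdot m+1$ when $\alpha>0$, I would exploit the decomposition of $\omega^\alpha\cdot m+1$ into the topological disjoint union of $m$ clopen blocks $B_1,\dots,B_m$, each order-homeomorphic to $\omega^\alpha+1$, with $B_j$ having top $\omega^\alpha\cdot j$. Given $c:\omega^\alpha\cdot m+1\to k$, apply Corollary \ref{weisscorollary} within each $B_j\setminus\{\omega^\alpha\cdot j\}\cong\omega^\alpha$ to obtain partitions $S_0^j,\dots,S_{k-1}^j$ of $\{1,\dots,n\}$ and subsets $X_i^j$ that are order-homeomorphic to $\beta_{S_i^j}$ and either empty or cofinal in $B_j$. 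If some $\beta_{S_i^j}>\omega^{\alpha_i}$, then $\beta_{S_i^j}\geq\omega^{\alpha_i+1}$ and an initial segment of $X_i^j$ gives a copy of $\omega^{\alpha_i}\cdot m_i+1$ in colour $i$, finishing. Otherwise the natural-sum equality argument from Theorem \ref{linksuccessor} forces $\beta_{S_i^j}=\omega^{\alpha_i}$ for each nonzero $\alpha_i$ and $S_i^j=\emptyset$ when $\alpha_i=0$, and then for each $j$, writing $i_j=c(\omega^\alpha\cdot j)$, the set $X_{i_j}^j\cup\{\omega^\alpha\cdot j\}$ is order-homeomorphic to $\overline\omega[\alpha_{i_j},1]$ in colour $i_j$. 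Since $m=\sum_i(m_i-1)+1$, the generalised finite pigeonhole principle applied to $(i_j)_{j=1}^m$ yields some $i^*$ with $|\{j:i_j=i^*\}|\geq m_{i^*}$, and taking the union of the $m_{i^*}$ corresponding subsets across distinct clopen blocks assembles a homeomorphic copy of $\overline\omega[\alpha_{i^*},m_{i^*}]$ in colour $i^*$.

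The hard part is the precise coordination of the two combinatorial layers: Weiss's theorem, which dictates the exponent-level structure within a single $\omega^\alpha$ block, and the finite pigeonhole, which collects contributions across the $m$ blocks. The numerical matching $\sum_i(m_i-1)+1=m$ ensures the argument is tight on both sides, so the partition $A_0,\dots,A_{k-1}$ on the lower bound side and the case split on $\beta_{S_i^j}$ versus $\omega^{\alpha_i}$ on the upper bound side must be aligned exactly; additionally, degenerate sub-cases where some $\alpha_i=0$ (so that $\overline\omega[\alpha_i,m_i]=m_i$ is finite discrete) need to be handled uniformly without disrupting either direction.
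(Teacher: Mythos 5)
Your proposal is correct and follows essentially the same route as the paper: the lower bound via the rank colouring of Theorem \ref{linksuccessor} on each $\omega^\alpha$ block together with a distribution of the $m-1$ points of rank $\alpha$ realising $m-1\nrightarrow\left(m_0,\dots,m_{k-1}\right)^1$, and the upper bound by extracting a copy of $\omega^{\alpha_{i_j}}+1$ in each of the $m$ clopen blocks and then applying the finite pigeonhole principle $m\rightarrow\left(m_0,\dots,m_{k-1}\right)^1$. The only cosmetic difference is that you unwind the proof of Theorem \ref{linksuccessor} (via Corollary \ref{weisscorollary} and the colour of each block's top point) inside each block rather than quoting it as a black box, and you treat the $\alpha_i=0$ cases explicitly where the paper dismisses them as ``no harder''.
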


\begin{proof}
We assume for simplicity that $\alpha_i>0$ for all $i\in k$, the other case being no harder. Thus $\overline\omega\left[\alpha_i,m_i\right]=\omega^{\alpha_i}\cdot m_i+1$ for all $i\in k$ and $\overline\omega\left[\alpha,m\right]=\omega^\alpha\cdot m+1$.

To see that
\[\omega^\alpha\cdot m\nrightarrow\left(top\,\omega^{\alpha_0}\cdot m_0+1,\omega^{\alpha_1}\cdot m_1+1,\dots,\omega^{\alpha_{k-1}}\cdot m_{k-1}+1\right)^1,\]
recall from the proof of Theorem \ref{linksuccessor} that there is a colouring $d:\omega^\alpha\rightarrow k$ with the property that $d^{-1}\left(\left\{i\right\}\right)^{\left(\alpha_i\right)}=\emptyset$ for all $i\in k$. Additionally observe that since $m-1\nrightarrow\left(m_0,m_1,\dots,m_{k-1}\right)^1$, there is a colouring $e:\left\{1,2,\dots,m-1\right\}\rightarrow k$ with the property that $\left|e^{-1}\left(\left\{i\right\}\right)\right|\leq m_i-1$ for all $i\in k$. Now define a colouring $c:\omega^\alpha\cdot m\rightarrow k$ by
\[
c\left(x\right)=
\begin{cases}
d\left(y\right),&\text{if $x=\omega^\alpha\cdot l+y$ with $l\in\omega$ and $0<y<\omega^\alpha$}\\
e\left(l\right),&\text{if $x=\omega^\alpha\cdot l$ with $l\in\left\{1,2,\dots,m-1\right\}$}\\
0,&\text{if $x=0$}.
\end{cases}
\]
Then for all $i\in k$,
\[\left|c^{-1}\left(\left\{i\right\}\right)^{\left(\alpha_i\right)}\right|\leq m_i-1\]
by construction, whereas
\[\left|\left(\omega^{\alpha_i}\cdot m_i+1\right)^{\left(\alpha_i\right)}\right|=m_i.\]
Hence $c^{-1}\left(\left\{i\right\}\right)$ cannot contain a homeomorphic copy of $\omega^{\alpha_i}\cdot m_i+1$.

To see that
\[\omega^\alpha\cdot m+1\rightarrow\left(top\,\omega^{\alpha_0}\cdot m_0+1,\omega^{\alpha_1}\cdot m_1+1,\dots,\omega^{\alpha_{k-1}}\cdot m_{k-1}+1\right)^1,\]
let $c:\omega^\alpha\cdot m+1\rightarrow k$. Observe that for each $j\in m$, $\left[\omega^\alpha\cdot j+1,\omega^\alpha\cdot\left(j+1\right)\right]\cong\omega^\alpha+1$. Therefore by Theorem \ref{linksuccessor} there exists $i_j\in k$ and $X_j\subseteq c^{-1}\left(\left\{i_j\right\}\right)\cap\left[\omega^\alpha\cdot j+1,\omega^\alpha\cdot\left(j+1\right)\right]$ with $X_j\cong\omega^{\alpha_{i_j}}+1$. Next observe that by the finite pigeonhole principle, $m\rightarrow\left(m_0,m_1,\dots,m_{k-1}\right)^1$. Hence there exists $i\in k$ such that $\left|\left\{j\in m:i_j=i\right\}\right|\geq m_i$, say $S\subseteq\left\{j\in m:i_j=i\right\}$ satisfies $\left|S\right|=m_i$. But then $\bigcup_{j\in S}X_j$ is a homeomorphic copy of $\omega^{\alpha_i}\cdot m_i+1$ in colour $i$.
\end{proof}

We conclude this section by considering at last ordinals of the form $\omega^\alpha\cdot\left(m+1\right)$ with $\alpha\in\omega_1\setminus\left\{0\right\}$ and $m\in\omega\setminus\left\{0\right\}$. Such an ordinal is homeomorphic to the topological disjoint union of $\omega^\alpha\cdot m+1$ and $\omega^\alpha$ and behaves similarly to $\omega^\alpha\cdot m+1$, but there are additional complications.

The following simple consequence of Theorem \ref{linkpowers} will be useful for finding extra homeomorphic copies of $\omega^\alpha$ in the required colour.

\begin{lemma}\label{findinspare}
Let $\alpha_0,\alpha_1,\dots,\alpha_{k-1}\in\omega_1\setminus\left\{0\right\}$, let $\alpha=\alpha_0\#\alpha_1\#\cdots\#\alpha_{k-1}$ and let $c:\omega^\alpha\rightarrow k$ be a colouring. Then either $c^{-1}\left(\left\{j\right\}\right)$ contains a homeomorphic copy of $\omega^{\alpha_j+1}$ for some $j\in k$, or $c^{-1}\left(\left\{i\right\}\right)$ contains a homeomorphic copy of $\omega^{\alpha_i}$ for all $i\in k$.
\end{lemma}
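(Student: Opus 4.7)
The plan is to apply Corollary \ref{weisscorollary} (the multi-colour form of Weiss's theorem) to the colouring $c$ on $\omega^\alpha$, and then carry out a short monotonicity argument in the natural-sum arithmetic.

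First I would write $\alpha = \omega^{\gamma_1} + \omega^{\gamma_2} + \cdots + \omega^{\gamma_n}$ with $\gamma_1 \geq \gamma_2 \geq \cdots \geq \gamma_n$ (the Cantor normal form of $\alpha$, with terms listed according to multiplicity rather than via coefficients). Applying Corollary \ref{weisscorollary} produces a partition of $\left\{1, 2, \ldots, n\right\}$ into parts $S_0, S_1, \ldots, S_{k-1}$ and subsets $X_i \subseteq c^{-1}\left(\left\{i\right\}\right)$ such that each $X_i$ is order-homeomorphic to $\left(\omega^\alpha\right)_{S_i}$. Set $\epsilon_i = \sum_{j \in S_i} \omega^{\gamma_j}$ when $S_i$ is non-empty and $\epsilon_i = 0$ otherwise, so that $X_i$ is order-homeomorphic to $\omega^{\epsilon_i}$ whenever $\epsilon_i > 0$.

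The key observation is that
\[\epsilon_0 \# \epsilon_1 \# \cdots \# \epsilon_{k-1} = \alpha = \alpha_0 \# \alpha_1 \# \cdots \# \alpha_{k-1}.\]
The first equality holds because the $S_i$'s partition $\left\{1, \ldots, n\right\}$, so each term $\omega^{\gamma_j}$ appears in exactly one $\epsilon_i$, and the natural sum of the $\epsilon_i$'s recombines these terms (with their multiplicities) back into the Cantor normal form of $\alpha$; the second equality is the hypothesis. Now if $\epsilon_i \geq \alpha_i$ for every $i$, then (using $\alpha_i > 0$ to force $\epsilon_i > 0$) each $X_i$ has an initial segment order-isomorphic to $\omega^{\alpha_i}$, producing a homeomorphic copy of $\omega^{\alpha_i}$ inside $c^{-1}\left(\left\{i\right\}\right)$; this is the second alternative. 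Otherwise $\epsilon_{i_0} < \alpha_{i_0}$ for some $i_0$, and by strict monotonicity of the natural sum, having $\epsilon_j \leq \alpha_j$ for every $j$ would force the left-hand side of the displayed equation to be strictly less than the right-hand side. Hence some $\epsilon_j > \alpha_j$, so $\epsilon_j \geq \alpha_j + 1$, and $X_j$ contains an initial segment giving a homeomorphic copy of $\omega^{\alpha_j + 1}$; this is the first alternative.

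This argument is essentially book-keeping once Weiss's corollary has been applied. The only items requiring slight care are the identity $\epsilon_0 \# \cdots \# \epsilon_{k-1} = \alpha$ and the strict monotonicity of $\#$, both of which follow at once from the definition of the natural sum as coefficient-wise addition in Cantor normal form. I do not expect a serious obstacle.
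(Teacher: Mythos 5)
Your proof is correct, but it takes a different route from the paper's. The paper deduces the lemma from Theorem \ref{linkpowers}: fixing $i\in k$, it observes (via Theorem \ref{milnerrado}) that $\left(\alpha_0+1\right)\odot\cdots\odot\left(\alpha_{i-1}+1\right)\odot\alpha_i\odot\left(\alpha_{i+1}+1\right)\odot\cdots\odot\left(\alpha_{k-1}+1\right)\leq\alpha$, so that $\omega^\alpha\rightarrow\left(top\,\omega^{\alpha_0+1},\dots,\omega^{\alpha_i},\dots,\omega^{\alpha_{k-1}+1}\right)^1$, and then lets $i$ range over $k$. You instead bypass the Milner--Rado sum entirely and apply Corollary \ref{weisscorollary} directly to $c$, recovering the conclusion from the identity $\epsilon_0\#\cdots\#\epsilon_{k-1}=\alpha=\alpha_0\#\cdots\#\alpha_{k-1}$ together with strict monotonicity of the natural sum; both of those facts are routine from the coefficientwise definition of $\#$, and your passage from $\epsilon_i\geq\alpha_i$ (respectively $\epsilon_j\geq\alpha_j+1$) to a homeomorphic copy of $\omega^{\alpha_i}$ (respectively $\omega^{\alpha_j+1}$) inside $X_i$ is sound, since an initial segment of an ordinal carries its own order topology as a subspace and order-homeomorphisms restrict. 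What each approach buys: the paper's argument is a two-line corollary once Theorem \ref{linkpowers} is in hand, but it invokes the Milner--Rado machinery and needs $k$ separate applications (one per fixed colour $i$); your single application of Weiss's corollary is self-contained modulo that corollary, gives the dichotomy for all colours simultaneously, and makes transparent that only natural-sum arithmetic is really at work here. Since ultimately both arguments rest on Weiss's theorem, the difference is one of packaging rather than of strength.
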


\begin{proof}
Fix $i\in k$. It is sufficient to prove that either $c^{-1}\left(\left\{i\right\}\right)$ contains a homeomorphic copy of $\omega^{\alpha_i}$, or $c^{-1}\left(\left\{j\right\}\right)$ contains a homeomorphic copy of $\omega^{\alpha_j+1}$ for some $j\in k\setminus\left\{i\right\}$. To see this, simply observe that by Theorem \ref{milnerrado} (or by inspection),
\[\left(\alpha_0+1\right)\odot\cdots\odot\left(\alpha_{i-1}+1\right)\odot\alpha_i\odot\left(\alpha_{i+1}+1\right)\odot\cdots\odot\left(\alpha_{k-1}+1\right)\leq\alpha\]
and hence by Theorem \ref{linkpowers},
\[\omega^\alpha\rightarrow\left(top\,\omega^{\alpha_0+1},\dots,\omega^{\alpha_{i-1}+1},\omega^{\alpha_i},\omega^{\alpha_{i+1}+1},\dots,\omega^{\alpha_{k-1}+1}\right)^1.\qedhere\]
\end{proof}

In our next result we use this to narrow the topological pigeonhole number down to one of two possibilities.

\begin{theorem}\label{multiplemixtures}
Let $\alpha_0,\alpha_1,\dots,\alpha_l\in\omega_1\setminus\left\{0\right\}$, $\alpha_{l+1},\alpha_{l+2},\dots,\alpha_{k-1}\in\omega_1$ and $m_0,m_1,\dots,m_{k-1}\in\omega\setminus\left\{0\right\}$, where $l\in k$. Then
\[P^{top}\left(\omega^{\alpha_0}\cdot\left(m_0+1\right),\dots,\omega^{\alpha_l}\cdot\left(m_l+1\right),\overline\omega\left[\alpha_{l+1},m_{l+1}\right],\dots,\overline\omega\left[\alpha_{k-1},m_{k-1}\right]\right)\]
is equal to either $\omega^\alpha\cdot m+1$ or $\omega^\alpha\cdot\left(m+1\right)$, where $\alpha=\alpha_0\#\alpha_1\#\cdots\#\alpha_{k-1}$ and $m=\sum_{i=0}^{k-1}\left(m_i-1\right)+1$.
\end{theorem}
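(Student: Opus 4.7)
The plan is to apply Proposition \ref{pseudosummary} with $\gamma=\alpha$: once we establish
\[\omega^\alpha\cdot m+1\leq P^{top}\left(\ldots\right)\leq\omega^\alpha\cdot\left(m+1\right),\]
the biembeddability result forces the pigeonhole number to equal one of the two endpoints. So the argument splits into matching lower and upper bounds.

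For the lower bound I would reuse the colouring $c:\omega^\alpha\cdot m\to k$ constructed in the proof of Theorem \ref{simplemultiples}, which satisfies $\left|c^{-1}\left(\left\{i\right\}\right)^{\left(\alpha_i\right)}\right|\leq m_i-1$ for every $i\in k$. Each target $\omega^{\alpha_i}\cdot\left(m_i+1\right)$ and each $\overline\omega\left[\alpha_i,m_i\right]$ with $\alpha_i>0$ has $m_i$ points of Cantor--Bendixson rank $\alpha_i$, while $\overline\omega\left[0,m_i\right]=m_i$ has $m_i$ points in total; by monotonicity of iterated derivatives under inclusion, no colour class of $c$ can contain a homeomorphic copy of the corresponding target.

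For the upper bound the crucial observation is the clopen decomposition
\[\omega^\alpha\cdot\left(m+1\right)\cong\left(\omega^\alpha\cdot m+1\right)\sqcup\omega^\alpha,\]
obtained by splitting at $\omega^\alpha\cdot m$: the initial segment $\left[0,\omega^\alpha\cdot m\right]$ is clopen and homeomorphic to $\omega^\alpha\cdot m+1$, while the complementary convex set $\left(\omega^\alpha\cdot m,\omega^\alpha\cdot\left(m+1\right)\right)$ is homeomorphic to $\omega^\alpha$. Given a colouring $c$, I would first apply Theorem \ref{simplemultiples} to the left piece to obtain a copy of $\overline\omega\left[\alpha_i,m_i\right]$ in some colour $i$. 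If $i>l$, we are done. If $i\leq l$ then $\alpha_i>0$, so this yields a copy of $\omega^{\alpha_i}\cdot m_i+1$ in colour $i$ in the left piece, and we still need a disjoint copy of $\omega^{\alpha_i}$ in colour $i$ somewhere in the right piece in order to assemble $\omega^{\alpha_i}\cdot\left(m_i+1\right)\cong\left(\omega^{\alpha_i}\cdot m_i+1\right)\sqcup\omega^{\alpha_i}$. Applying Lemma \ref{findinspare} to $c$ restricted to the right piece (identified with $\omega^\alpha$) then produces either (a) a homeomorphic copy of $\omega^{\alpha_j+1}$ in some colour $j$, which as an ordinal is larger than the $j$-th target and so contains it as an initial segment; or (b) a copy of $\omega^{\alpha_{i'}}$ in every colour $i'$, and taking $i'=i$ supplies the missing $\omega^{\alpha_i}$.

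The main obstacle I anticipate is the bookkeeping around indices $j>l$ with $\alpha_j=0$, where Lemma \ref{findinspare} is not directly applicable. I would circumvent this by restricting that lemma to the subsequence indexed by $\left\{j\in k:\alpha_j>0\right\}$; since zero exponents contribute nothing to the natural sum $\alpha$, the ambient ordinal is unchanged. This restriction is sufficient because alternative (b) is only invoked at the specific $i\leq l$, which has $\alpha_i>0$ by hypothesis, and alternative (a) only needs to be available at indices with $\alpha_j>0$ to close the case analysis.
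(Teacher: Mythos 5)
Your argument is essentially the paper's own proof: the same reduction via Proposition \ref{pseudosummary}, the same lower bound coming from Theorem \ref{simplemultiples} (the paper simply invokes monotonicity of $P^{top}$ rather than re-running the colouring, which is immaterial), and the same upper bound obtained by splitting $\omega^\alpha\cdot\left(m+1\right)$ at $\omega^\alpha\cdot m$, applying Theorem \ref{simplemultiples} to the left clopen piece, applying Lemma \ref{findinspare} to the right piece $\cong\omega^\alpha$, and gluing $X\cup Y$ into a copy of $\omega^{\alpha_i}\cdot\left(m_i+1\right)$.

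The one place you depart from the paper is the $\alpha_j=0$ bookkeeping, and the fix you propose does not quite work as stated. Lemma \ref{findinspare} takes a colouring whose set of colours is exactly the index set of the exponents; if you restrict the exponents to $K'=\left\{j\in k:\alpha_j>0\right\}$, the colouring of the right piece still takes values in all of $k$, and the points coloured outside $K'$ cannot simply be ignored: in the extreme case the entire right piece is coloured with a single colour $j$ having $\alpha_j=0$, so no copy of $\omega^\alpha$ remains inside the colours of $K'$ and the restricted lemma has nothing to act on. Note also that in this situation it is alternative (a) at a \emph{zero}-exponent index that rescues you (a copy of $\omega$ in colour $j$ gives $m_j$ points of colour $j$, hence the discrete target $m_j$), contrary to your closing claim that (a) is only needed at indices with $\alpha_j>0$. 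The repair is easy and is what the paper implicitly relies on: the proof of Lemma \ref{findinspare} goes through verbatim when the exponents at indices other than the fixed $i$ are allowed to be zero, since the exponents actually fed into Theorem \ref{linkpowers} are $\alpha_i$ and $\alpha_j+1$ for $j\neq i$, all nonzero, and $\left(\alpha_0+1\right)\odot\cdots\odot\alpha_i\odot\cdots\odot\left(\alpha_{k-1}+1\right)\leq\alpha$ still holds because the natural sum is strictly increasing in each argument; alternative (a) at an index with $\alpha_j=0$ then yields a copy of $\omega$, which contains the finite discrete target $m_j$. (Alternatively, dispose of the zero-exponent colours first: if some colour $j$ with $\alpha_j=0$ has at least $m_j$ points you are done outright, since finite subspaces of a Hausdorff space are discrete; otherwise those colour classes are finite and you may pass to a final segment of the right piece avoiding them, after which your restricted application of the lemma is legitimate.)
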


\begin{proof}
Write $P$ for the topological pigeonhole number in the statement of the theorem. Recall that by Proposition \ref{pseudosummary}, it is sufficient to prove that $\omega^\alpha\cdot m+1\leq P\leq\omega^\alpha\cdot\left(m+1\right)$. The first inequality follows immediately from Theorem \ref{simplemultiples} and monotonicity since $\omega^{\alpha_i}\cdot\left(m_0+1\right)>\overline\omega\left[\alpha_i,m_i\right]$ for all $i\in\left\{0,1,\dots,l\right\}$. The second inequality states that
\[\omega^\alpha\cdot\left(m+1\right)\rightarrow\left(top\,\omega^{\alpha_0}\cdot\left(m_0+1\right),\dots,\omega^{\alpha_l}\cdot\left(m_l+1\right),\overline\omega\left[\alpha_{l+1},m_{l+1}\right],\dots,\overline\omega\left[\alpha_{k-1},m_{k-1}\right]\right)^1.\]

To see this, let $c:\omega^\alpha\cdot\left(m+1\right)\rightarrow k$. First note that for $i\in\left\{0,1,\dots,l\right\}$, $\omega^{\alpha_i}\cdot\left(m_i+1\right)$ is homeomorphic to the topological disjoint union of $\overline\omega\left[\alpha_i,m_i\right]=\omega^{\alpha_i}\cdot m_i+1$ and $\omega^{\alpha_i}$. Now by Theorem \ref{simplemultiples}, there exists $i\in k$ and $X\subseteq c^{-1}\left(\left\{i\right\}\right)\cap\left(\omega^\alpha\cdot m+1\right)$ with $X\cong\overline\omega\left[\alpha_i,m_i\right]$. If $i\in\left\{l+1,l+2,\dots,k-1\right\}$, then we are done, so assume $i\in\left\{0,1,\dots,l\right\}$. Next consider the restriction of $c$ to $\left[\omega^\alpha\cdot m+1,\omega^\alpha\cdot\left(m+1\right)\right)$, which is homeomorphic to $\omega^\alpha$. By Lemma \ref{findinspare}, either $c^{-1}\left(\left\{j\right\}\right)\cap\left[\omega^\alpha\cdot m+1,\omega^\alpha\cdot\left(m+1\right)\right)$ contains a homeomorphic copy of $\omega^{\alpha_j+1}$ for some $j\in k$, in which case we are done, or there exists $Y\subseteq c^{-1}\left(\left\{i\right\}\right)\cap\left[\omega^\alpha\cdot m+1,\omega^\alpha\cdot\left(m+1\right)\right)$ with $Y\cong\omega^{\alpha_i}$, in which case $X\cup Y$ is a homeomorphic copy of $\omega^{\alpha_i}\cdot\left(m_i+1\right)$ in colour $i$.
\end{proof}

Recall that by Lemmas \ref{pseudorelevant} and \ref{pseudopositive} it is enough to consider ordinals of the form $\omega^\alpha\cdot m$ and $\omega^\alpha\cdot m+1$ with $m\in\omega\setminus\left\{0\right\}$. It follows that Theorems \ref{simplemultiples} and \ref{multiplemixtures} together cover case \ref{finiteofcountableother} of the principle. Thus to complete case \ref{finiteofcountable} it remains only to distinguish between the two possibilities presented by Theorem \ref{multiplemixtures}.

In our final result of this section we do this for the case in which $m_i=1$ for all $i\in k\setminus\left\{0\right\}$. In particular this completes case \ref{distinguishingcase}. At this point the Cantor--Bendixson ranks of ordinal \emph{exponents} come into play. They essentially determine whether or not the negative relation can be proved using the type of colouring given in the first half of the proof of Theorem \ref{linksuccessor}.

\begin{theorem}\label{distinguishing}
Let $\alpha_0,\alpha_1,\dots,\alpha_{k-1}\in\omega_1\setminus\left\{0\right\}$, $m_0\in\omega\setminus\left\{0\right\}$ and $l\in k$. Assume without loss of generality that if $m_0=1$ then $\operatorname{CB}\left(\alpha_0\right)\leq\operatorname{CB}\left(\alpha_i\right)$ for all $i\in\left\{1,2,\dots,l\right\}$. Then
\[\omega^\alpha\cdot m_0+1\rightarrow\left(top\,\omega^{\alpha_0}\cdot\left(m_0+1\right),\omega^{\alpha_1}\cdot 2,\dots,\omega^{\alpha_l}\cdot 2,\omega^{\alpha_{l+1}}+1,\dots,\omega^{\alpha_{k-1}}+1\right)^1\]
if and only if $\operatorname{CB}\left(\alpha_h\right)<\operatorname{CB}\left(\alpha_0\right)$ for some $h\in k$, where $\alpha=\alpha_0\#\alpha_1\#\cdots\#\alpha_{k-1}$.
\end{theorem}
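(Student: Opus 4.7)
The plan is to prove the biconditional by tackling each direction separately, drawing on the machinery already established in this section.

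For the forward implication, assume $\operatorname{CB}(\alpha_h)<\operatorname{CB}(\alpha_0)$ for some $h\in k$. Given $c:\omega^\alpha\cdot m_0+1\rightarrow k$, I would first apply Theorem~\ref{simplemultiples} to produce either $\omega^{\alpha_i}+1$ in colour $i\neq 0$ (which already provides a target when $i\in\{l+1,\dots,k-1\}$) or $\omega^{\alpha_0}\cdot m_0+1$ in colour $0$. In the remaining cases the copy must be upgraded: for $i\in\{1,\dots,l\}$ by finding an extra $\omega^{\alpha_i}$ in colour $i$ disjoint from the existing piece, and for $i=0$ by finding an extra $\omega^{\alpha_0}$ in colour $0$ disjoint from the $m_0$ compact pieces. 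To extract this extra copy I would apply Corollary~\ref{weisscorollary} to the final block $[\omega^\alpha\cdot(m_0-1)+1,\omega^\alpha\cdot m_0)\cong\omega^\alpha$ and combine the hypothesis $\operatorname{CB}(\alpha_h)<\operatorname{CB}(\alpha_0)$ with Theorem~\ref{milnerrado} to force the following dichotomy: in any Weiss partition $S_0,\dots,S_{k-1}$, either some $\beta_{S_j}\geq\omega^{\alpha_j+1}$, in which case $c^{-1}(\{j\})$ already contains the relevant target (since $\omega^{\alpha_j+1}$ contains homeomorphic copies of $\omega^{\alpha_j}\cdot 2$, of $\omega^{\alpha_j}+1$, and, when $j=0$, of $\omega^{\alpha_0}\cdot(m_0+1)$), or Lemma~\ref{findinspare} supplies the extra $\omega^{\alpha_i}$ required in every colour.

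For the negative direction, assume $\operatorname{CB}(\alpha_i)\geq\operatorname{CB}(\alpha_0)$ for all $i$. I would construct a colouring imitating the first half of the proof of Theorem~\ref{linksuccessor}. Write $\alpha=\omega^{\gamma_1}+\omega^{\gamma_2}+\cdots+\omega^{\gamma_n}$ with $\gamma_1\geq\cdots\geq\gamma_n$; the hypothesis implies $\gamma_n=\operatorname{CB}(\alpha_0)$, so that $\alpha_0$'s CNF terms realise the bottom of $\alpha$'s CNF. I would choose a partition $\{1,\dots,n\}=S_0\sqcup\cdots\sqcup S_{k-1}$ with $\sum_{j\in S_i}\omega^{\gamma_j}=\alpha_i$, arranging that $S_0$ contains the tail indices realising $\alpha_0$. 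Colour each point of $[\omega^\alpha\cdot(j-1)+1,\omega^\alpha\cdot j)$ by the $S_i$-block containing its Cantor--Bendixson rank, and colour each big point $\omega^\alpha\cdot j$ (for $j=1,\dots,m_0$) with colour $0$. For every $i\neq 0$, Proposition~\ref{rankcolouring} applied block-by-block, together with the clopen disjoint-union structure of $A_i$ across blocks, gives $A_i^{(\alpha_i)}=\emptyset$, so $A_i$ contains neither $\omega^{\alpha_i}+1$ nor $\omega^{\alpha_i}\cdot 2$.

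The main obstacle is verifying that $A_0$ contains no homeomorphic copy of $\omega^{\alpha_0}\cdot(m_0+1)$. The strategy is to prove $A_0\cong\omega^{\alpha_0}\cdot m_0+1$. The CB-rank hypothesis is precisely what allows $S_0$ to be chosen so that, in each block, $A_0\cap[\omega^\alpha\cdot(j-1)+1,\omega^\alpha\cdot j]$ is a closed subset of the ambient, order-homeomorphic to $\omega^{\alpha_0}+1$ with top $\omega^\alpha\cdot j$; the WLOG assumption covering the case $m_0=1$ rules out the competing scenario in which some $\alpha_i$ with strictly smaller bottom CNF exponent would obstruct this tail-alignment of $T_0$. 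Granting this, $A_0$ decomposes as a clopen disjoint union of $m_0$ copies of $\omega^{\alpha_0}+1$, and so $A_0\cong\omega^{\alpha_0}\cdot m_0+1$. Part~\ref{pseudodifficultcase} of Proposition~\ref{pseudonegative} then shows that $\omega^{\alpha_0}\cdot m_0+1$ contains no copy of $\omega^{\alpha_0}\cdot(m_0+1)$, completing the proof.
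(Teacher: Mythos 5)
Both directions of your proposal contain genuine gaps.

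In the ``if'' direction, the dichotomy you invoke is just Lemma \ref{findinspare}, which holds unconditionally, so your argument never actually uses the hypothesis $\operatorname{CB}\left(\alpha_h\right)<\operatorname{CB}\left(\alpha_0\right)$ --- and it must, since without it the relation fails. Concretely, the breakdown is at the gluing step: the copy of $\omega^{\alpha_0}\cdot m_0+1$ produced by Theorem \ref{simplemultiples} can occupy all $m_0$ blocks, and the extra copy of $\omega^{\alpha_i}$ supplied by Lemma \ref{findinspare} inside a block need not be topologically separated from the piece already sitting in that block, so the union need not be homeomorphic to the target. Indeed, for the colouring used in the negative direction (available exactly when $\operatorname{CB}\left(\alpha_0\right)\leq\operatorname{CB}\left(\alpha_i\right)$ for all $i$), colour $0$ contains $\omega^{\alpha_0}\cdot m_0+1$ and every block contains a copy of $\omega^{\alpha_i}$ in every colour $i$, yet no colour contains its target; so no argument of the shape you describe can succeed. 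What is missing is the paper's Lemma \ref{distinguishinglemma}: working inside a single block $\cong\omega^\alpha+1$, one gets either $\omega^{\alpha_j}\cdot 2$ in some colour $j$, or $\omega^{\alpha_j}+1$ in a colour $j$ whose exponent has \emph{minimal} Cantor--Bendixson rank; the hypothesis then rules out $j=0$ in the second alternative, and the gluing with Lemma \ref{findinspare} is carried out across two \emph{different} blocks (possible since $m_0>1$; the case $m_0=1$ is handled by the lemma alone via the WLOG assumption), which is what resolves the separation problem.

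In the ``only if'' direction, your colouring is the paper's, and the treatment of the colours $i\neq 0$ via Proposition \ref{rankcolouring} is fine, but the claim that $A_0\cong\omega^{\alpha_0}\cdot m_0+1$ is false in general. Take $k=2$, $\alpha_0=\alpha_1=\omega+1$, so $\alpha=\omega\cdot 2+2$ and (with $n=4\in S_0$) $A_0$ consists of the points of rank in $\left[\omega,\omega\cdot 2\right)$, the points of rank $\omega\cdot 2+1$, and the big points; the rank-$\left[\omega,\omega\cdot2\right)$ points accumulate at rank-$\omega\cdot 2$ points, which lie in colour $1$, so $A_0$ is not closed in $\omega^\alpha\cdot m_0+1$ and hence not even compact, whereas $\omega^{\alpha_0}\cdot m_0+1$ is compact; in particular $A_0\cap\left[\omega^\alpha\cdot(j-1)+1,\omega^\alpha\cdot j\right]$ is not a closed copy of $\omega^{\alpha_0}+1$. (Also, the WLOG assumption plays no role in this direction; the operative hypothesis is only that $n$ may be placed in $S_0$.) The paper avoids identifying the homeomorphism type of $A_0$ altogether: with $\eta=\sum_{j\in S_0\setminus\left\{n\right\}}\omega^{\gamma_j}$ it computes $A_0^{\left(\eta\right)}\cong\omega^{\omega^{\gamma_n}}\cdot m_0+1$ from the proof of Proposition \ref{rankcolouring}, notes $\left(\omega^{\alpha_0}\cdot\left(m_0+1\right)\right)^{\left(\eta\right)}\cong\omega^{\omega^{\gamma_n}}\cdot\left(m_0+1\right)$, and concludes by part \ref{pseudodifficultcase} of Proposition \ref{pseudonegative}. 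Your argument needs to be repaired along these lines (or some substitute supplied) before either direction is complete.
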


We will prove the ``if'' part by combining Lemma \ref{findinspare} with the following result.

\begin{lemma}\label{distinguishinglemma}
Let $\alpha_0,\alpha_1,\dots,\alpha_{k-1}\in\omega_1\setminus\left\{0\right\}$, let $\alpha=\alpha_0\#\alpha_1\#\cdots\#\alpha_{k-1}$ and let $c:\omega^\alpha+1\rightarrow k$ be a colouring. Then there exists $j\in k$ such that either $c^{-1}\left(\left\{j\right\}\right)$ contains a homeomorphic copy of $\omega^{\alpha_j}\cdot 2$, or $c^{-1}\left(\left\{j\right\}\right)$ contains a homeomorphic copy of $\omega^{\alpha_j}+1$ and $\operatorname{CB}\left(\alpha_j\right)\leq\operatorname{CB}\left(\alpha_i\right)$ for all $i\in k$.
\end{lemma}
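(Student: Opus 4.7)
The plan is to apply Corollary \ref{weisscorollary} to $c$ restricted to $\omega^\alpha$ and to argue case by case on the resulting Weiss decomposition. Writing $\alpha=\omega^{\gamma_1}+\omega^{\gamma_2}+\cdots+\omega^{\gamma_n}$ with $\gamma_1\geq\gamma_2\geq\dots\geq\gamma_n$, so that $\operatorname{CB}\left(\alpha\right)=\gamma_n$, the corollary yields a partition $S_0,S_1,\dots,S_{k-1}$ of $\left\{1,2,\dots,n\right\}$ together with cofinal-or-empty order-homeomorphic subsets $X_i\cong\omega^{\widetilde\alpha_i}$ of $c^{-1}\left(\left\{i\right\}\right)$, where $\widetilde\alpha_i=\sum_{l\in S_i}\omega^{\gamma_l}$.

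If $\widetilde\alpha_i>\alpha_i$ for some $i$, then $\omega^{\widetilde\alpha_i}\geq\omega^{\alpha_i+1}$ already contains a homeomorphic copy of $\omega^{\alpha_i}\cdot 2$, so the first alternative of the lemma holds with $j=i$. Otherwise, strict monotonicity of the natural sum together with $\widetilde\alpha_0\#\widetilde\alpha_1\#\cdots\#\widetilde\alpha_{k-1}=\alpha_0\#\alpha_1\#\cdots\#\alpha_{k-1}$ forces $\widetilde\alpha_i=\alpha_i$ for every $i$, so every $X_i$ is an order-homeomorphic cofinal copy of $\omega^{\alpha_i}$ in colour $i$. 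Setting $j=c\left(\omega^\alpha\right)$, the set $X_j\cup\left\{\omega^\alpha\right\}$ is a monochromatic order-homeomorphic copy of $\omega^{\alpha_j}+1$ in colour $j$; when $\operatorname{CB}\left(\alpha_j\right)$ is minimal among the $\operatorname{CB}\left(\alpha_i\right)$, this $j$ satisfies the second alternative.

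The remaining, central subcase is where $\operatorname{CB}\left(\alpha_j\right)>\gamma_n$, forcing $n\notin S_j$, and hence some $h\neq j$ with $n\in S_h$ satisfies $\operatorname{CB}\left(\alpha_h\right)=\gamma_n$. Here I would produce a homeomorphic copy of $\omega^{\alpha_h}\cdot 2$, viewed topologically as the disjoint union of $\omega^{\alpha_h}+1$ and $\omega^{\alpha_h}$, entirely inside colour $h$. The approach is to induct on the Cantor normal form length $n$ of $\alpha$: restrict $c$ to the initial segment $\omega^{\alpha'}+1$ where $\alpha'=\alpha-\omega^{\gamma_n}$, and apply the inductive hypothesis to the sequence obtained by replacing $\alpha_h$ by $\alpha_h-\omega^{\gamma_n}$ (whose natural sum is $\alpha'$, with strictly shorter Cantor normal form). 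Either this yields an $\omega^{\alpha_i}\cdot 2$ in some colour $i$ (which satisfies the present lemma directly), or it yields a colour-$h$ copy of $\omega^{\alpha_h-\omega^{\gamma_n}}+1$ inside $\omega^{\alpha'}+1$. In the latter case I would bootstrap using the tiling $\omega^\alpha=\omega^{\alpha'}\cdot\omega^{\omega^{\gamma_n}}$: applying the inductive conclusion to cofinally many tiles (selected via the cofinal tail of $X_h$) and stacking the resulting compact pieces yields a homeomorphic copy of $\omega^{\alpha_h}+1$ in colour $h$, and a disjoint open copy of $\omega^{\alpha_h}$ is then extracted from the remaining portion of $X_h$.

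The main obstacle will be carrying out the bootstrapping rigorously: verifying that the stacked compact pieces genuinely glue into a copy of $\omega^{\alpha_h}+1$ (rather than collapsing into a smaller ordinal), and that the leftover portion of $X_h$ still supplies an order-homeomorphic copy of $\omega^{\alpha_h}$ disjoint from all the compact pieces used in the stacking.
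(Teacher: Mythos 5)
Your stage-one reduction (apply Corollary \ref{weisscorollary} to all of $\omega^\alpha$, dispose of the case $\widetilde\alpha_i>\alpha_i$, and handle the case where the top point's colour has minimal Cantor--Bendixson rank) is fine, but the ``central subcase'' is where the proposal goes wrong, and not merely for the technical gluing reasons you flag. You aim to produce $\omega^{\alpha_h}\cdot 2$ (or en route, $\omega^{\alpha_h}+1$) in the minimal-rank colour $h$, but no such set need exist. Take $k=2$, $\alpha_0=\omega$, $\alpha_1=1$, so $\alpha=\omega+1$, and colour $\omega^{\omega+1}+1$ by letting colour $1$ be exactly $\left\{\omega^\omega\cdot m:1\leq m<\omega\right\}$ and colour $0$ everything else, including the top point. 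Here Weiss's corollary can only return $\widetilde\alpha_0=\omega$, $\widetilde\alpha_1=1$, the top point has colour $0$ with $\operatorname{CB}\left(\alpha_0\right)=1>0=\operatorname{CB}\left(\alpha_1\right)$, so you are in your central subcase with $h=1$; yet colour $1$ is an infinite discrete set and contains no copy of $\omega+1$, let alone $\omega\cdot 2$. The lemma is witnessed here only by colour $0$ containing $\omega^{\alpha_0}\cdot 2=\omega^\omega\cdot 2$, obtained by taking one whole block $\left(\omega^\omega\cdot m,\omega^\omega\cdot\left(m+1\right)\right)$ together with a tower of compact colour-$0$ pieces of increasing order type in later blocks converging to the colour-$0$ top point $\omega^{\omega+1}$. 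That ``tower converging to the top point'' construction is exactly the second bullet ($j_0\neq j_2$) of the paper's proof, and it is entirely missing from your plan: in the central subcase the correct witness may be the top-point colour $j$, not $h$.

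Beyond this, the inductive bookkeeping is also defective: when $\alpha_h=\omega^{\gamma_n}$ the modified entry is $0$ and the hypothesis of the lemma fails; the inductive conclusion may give $\omega^{\alpha_h-\omega^{\gamma_n}}\cdot 2$ in colour $h$, which is neither of your two listed outcomes; and its second alternative certifies minimality of $\operatorname{CB}$ only relative to the \emph{modified} sequence (whose least exponent is $\gamma_{n-1}$, possibly $>\gamma_n$), so it can land on a colour $i\neq h$ that does not satisfy the original lemma. Finally, the gluing problem you acknowledge is not a routine verification: to stack compact colour-$h$ pieces into a copy of $\omega^{\alpha_h}+1$ you need colour-$h$ points sitting precisely at the limit levels of the stack, and your single application of Weiss to $\omega^\alpha$ gives no control over where $X_h$ meets those levels. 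The paper secures this with a two-level decomposition: it first applies Weiss to the tile endpoints $\left\{\delta\cdot x:x\in\varepsilon\right\}$ (with $\delta=\omega^{\omega^{\gamma_1}+\cdots+\omega^{\gamma_{n-1}}}$, $\varepsilon=\omega^{\omega^{\gamma_n}}$) to get a monochromatic cofinal copy $Z$ of $\varepsilon$ serving as glue points, then uses Weiss's Lemma 2.6 to get cofinal copies of $\delta$ inside the gaps of $Z$, applies Weiss again in each gap, stabilises the resulting partitions on a cofinal $T\subseteq\varepsilon$, and splits into three cases according to the colours $j_0$ (top point), $j_1$ (of $Z$) and $j_2$ (the colour absorbing the $\omega^{\gamma_n}$ term). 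Your argument would need to be restructured along these lines rather than patched.
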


The proof of this lemma uses ideas from the proof of Weiss's theorem \cite[Theorem 2.3]{baumgartner}. In particular we will make use of the following result, which was also published by Baumgartner \cite[Lemma 2.6]{baumgartner}.

\begin{lemma}[Weiss]
Let $\alpha\in\omega_1$ not be a power of $\omega$. Write $\alpha=\omega^{\gamma_1}+\omega^{\gamma_2}+\cdots+\omega^{\gamma_n}$ with $\gamma_1\geq\gamma_2\geq\dots\geq\gamma_n$ and $n>1$, and let $\delta=\omega^{\omega^{\gamma_1}+\omega^{\gamma_2}+\cdots+\omega^{\gamma_{n-1}}}$ and $\varepsilon=\omega^{\omega^{\gamma_n}}$. Suppose $Z\subseteq\left\{\delta\cdot x: x\in\varepsilon\right\}$ is order-homeomorphic to $\varepsilon$, say $Z=\left\{z_\eta:\eta\in\varepsilon\right\}$. Then for each $\eta\in\varepsilon$ there exists $Y_\eta\subseteq\left(z_\eta,z_{\eta+1}\right)$ such that $Y_\eta$ is order-homeomorphic to $\delta$ and $Y_\eta$ is cofinal in $\left(z_\eta,z_{\eta+1}\right)$.
\end{lemma}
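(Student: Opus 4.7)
Write $z_\eta = \delta \cdot x_\eta$ and $z_{\eta+1} = \delta \cdot x_{\eta+1}$ with $x_\eta, x_{\eta+1} \in \varepsilon$. Since $\alpha$ is countable, so are $\delta$ and $\varepsilon$, and every nonzero limit ordinal below either of them has cofinality $\omega$. The plan is to produce a closed subset of the ordinal $z_{\eta+1}$ of order type $\delta$ that is contained in $(z_\eta, z_{\eta+1})$ and has supremum $z_{\eta+1}$; by Proposition \ref{ordertypehomeomorphic} such a set is automatically order-homeomorphic to $\delta$. The construction splits into two cases according to whether $x_{\eta+1}$ is a successor or a limit ordinal in $\varepsilon$.

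In the successor case $x_{\eta+1} = y + 1$ with $y \geq x_\eta$, I would take $Y_\eta = [\delta \cdot y + 1, z_{\eta+1})$. Since $\delta$ is a limit ordinal we have $1 + \delta = \delta$, so the order type of $Y_\eta$ is $\delta$. It is closed in $z_{\eta+1}$ (its only external candidate limit point is $z_{\eta+1}$ itself, which is not an element of $z_{\eta+1}$), it is contained in $(z_\eta, z_{\eta+1})$, and it is cofinal there since $\sup Y_\eta = z_{\eta+1}$.

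In the limit case, fix a strictly increasing cofinal $\omega$-sequence $(y_i)_{i < \omega}$ in $x_{\eta+1}$ with $y_0 > x_\eta$, using that $\operatorname{cf}(x_{\eta+1}) = \omega$. Choose positive ordinals $\xi_i < \delta$ with $\sum_{i < \omega} \xi_i = \delta$, which is possible because $\delta$ is itself a countable limit ordinal. Set
\[Y_\eta = \bigcup_{i < \omega} \left[\delta \cdot y_i + 1,\, \delta \cdot y_i + \xi_i\right].\]
Each block is a closed bounded subinterval of $z_{\eta+1}$, and since $\xi_i < \delta$ we have $\delta \cdot y_i + \xi_i < \delta \cdot (y_i + 1) \leq \delta \cdot y_{i+1}$, so the blocks are disjoint and strictly increasing; thus the order type of $Y_\eta$ is $\sum_i \xi_i = \delta$. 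Closedness in $z_{\eta+1}$ then follows because the only limit of $Y_\eta$ not already inside a block is $\sup Y_\eta = \delta \cdot \sup_i y_i = z_{\eta+1}$, which is not an element of $z_{\eta+1}$.

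The main obstacle I expect is in the limit case: one has to orchestrate the block sizes so that their ordinal sum is exactly $\delta$ while ensuring the blocks separate cleanly and the supremum of the union hits $z_{\eta+1}$. This works because the cofinalities of $\delta$ and of $x_{\eta+1}$ both equal $\omega$ in the countable setting of the lemma; a cofinality mismatch, which could only arise beyond $\omega_1$, would obstruct any such construction, so the countability hypothesis on $\alpha$ is genuinely used.
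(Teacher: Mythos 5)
Your construction is correct, and there is in fact nothing in the paper to compare it against: the paper does not prove this lemma but simply quotes it from Baumgartner \cite[Lemma 2.6]{baumgartner} (attributing it to Weiss), so your argument is an independent, elementary verification of a cited result. The two-case analysis does the job: writing $z_\eta=\delta\cdot x_\eta$ and $z_{\eta+1}=\delta\cdot x_{\eta+1}$, when $x_{\eta+1}=y+1$ the tail interval $\left[\delta\cdot y+1,\delta\cdot\left(y+1\right)\right)$ is closed in $\delta\cdot\left(y+1\right)$, has order type $\delta$ and is cofinal in $\left(z_\eta,z_{\eta+1}\right)$; when $x_{\eta+1}$ is a limit, your blocks are pairwise separated (since $\xi_i<\delta$ gives $\delta\cdot y_i+\xi_i<\delta\cdot y_{i+1}$), their union is closed in $z_{\eta+1}$ and cofinal there because $\sup_i\delta\cdot y_i=\delta\cdot x_{\eta+1}$, and Proposition \ref{ordertypehomeomorphic} then upgrades ``correct order type plus closed'' to ``order-homeomorphic'' exactly as you say. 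One cosmetic imprecision: the closed block $\left[\delta\cdot y_i+1,\delta\cdot y_i+\xi_i\right]$ has order type $\xi_i+1$ when $\xi_i$ is infinite, not $\xi_i$; this is harmless, since each partial sum of the block types is still below $\delta$ (as $\delta$ is a power of $\omega$ and hence additively indecomposable) while the partial sums are cofinal in $\delta$, so the union has order type exactly $\delta$ anyway. You also correctly identify where countability of $\alpha$ enters, namely that $\operatorname{cf}\left(x_{\eta+1}\right)=\omega=\operatorname{cf}\left(\delta\right)$ in the limit case, which is what lets the blocks be threaded cofinally through the interval.
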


\begin{proof}[Proof of Lemma \ref{distinguishinglemma}]
Write $\alpha=\omega^{\gamma_1}+\omega^{\gamma_2}+\cdots+\omega^{\gamma_n}$ with $\gamma_1\geq\gamma_2\geq\dots\geq\gamma_n$, and observe that for $j\in k$, the condition that $\operatorname{CB}\left(\alpha_j\right)\leq\operatorname{CB}\left(\alpha_i\right)$ for all $i\in k$ is equivalent to the condition that $\operatorname{CB}\left(\alpha_j\right)=\gamma_n$. The case $k=1$ is trivial, so assume $k>1$ (and hence $n>1$) and let $\delta=\omega^{\omega^{\gamma_1}+\omega^{\gamma_2}+\cdots+\omega^{\gamma_{n-1}}}$ and $\varepsilon=\omega^{\omega^{\gamma_n}}$ as in Weiss's lemma.

First let $c\left(\omega^\alpha\right)=j_0$. Next, by Corollary \ref{weisscorollary} there exists $j_1\in k$ and $Z\subseteq c^{-1}\left(\left\{j_1\right\}\right)\cap\left\{\delta\cdot x: x\in\varepsilon\right\}$ such that $Z$ is cofinal in $\left\{\delta\cdot x: x\in\varepsilon\right\}$ (and hence in $\omega^\alpha$) and $Z$ is order-homeomorphic to $\varepsilon$, say $Z=\left\{z_\eta:\eta\in\varepsilon\right\}$. For each $\eta\in\varepsilon$ choose $Y_\eta$ as in Weiss's lemma. Then for each $\eta\in\varepsilon$, by Corollary \ref{weisscorollary} there exists a partition of $\left\{1,2,\dots,n-1\right\}$ into $k$ pieces $S^\eta_0,S^\eta_1,\dots,S^\eta_{k-1}$ and for each $i\in k$ a subset $X^\eta_i\subseteq c^{-1}\left(\left\{i\right\}\right)\cap Y_\eta$ such that for all $i\in k$, $X^\eta_i$ is order-homeomorphic to $\delta_{S^\eta_i}$ and $X^\eta_i$ is either empty or cofinal in $Y_\eta$ (and hence in $\left(z_\eta,z_{\eta+1}\right)$). Moreover since $\varepsilon\rightarrow\left(\varepsilon\right)^1_r$ for all $r\in\omega\setminus\left\{0\right\}$ (either using Theorem \ref{linkpowers} or simply from the fact that $\varepsilon$ is a power of $\omega$), there exists $T\subseteq\varepsilon$ of order type $\varepsilon$ and a single partition of $\left\{1,2,\dots,n-1\right\}$ into $k$ pieces $S_0,S_1,\dots,S_{k-1}$ such that $S^\eta_i=S_i$ for all $\eta\in T$ and all $i\in k$.

Now if $\delta_{S_j}>\omega^{\alpha_j}$ for some $j\in k$, then we are done. So we may assume $\delta_{S_i}\leq\omega^{\alpha_i}$ for all $i\in k$. But then there must exist $j_2\in k$ with $\operatorname{CB}\left(\alpha_{j_2}\right)=\gamma_n$ such that in fact $\delta_{S_i}=\omega^{\alpha_i}$ for all $i\in k\setminus\left\{j_2\right\}$ and $\delta_{S_{j_2}\cup\left\{n\right\}}=\omega^{\alpha_{j_2}}$.

There are now three possibilities.
\begin{itemize}
\item
If $j_1\neq j_2$, then take $j=j_1$. Pick $\eta_1,\eta_2\in T$ and take
\[X=X^{\eta_1}_j\cup\left\{z_{\eta_1+1}\right\}\cup X^{\eta_2}_j.\]
Then $X$ is a homeomorphic copy of $\omega^{\alpha_j}\cdot 2$ in colour $j$.
\item
If $j_0\neq j_2$, then take $j=j_0$. We now use an argument from the proof of Weiss's theorem. Let $\left(\eta_r\right)_{r\in\omega}$ be a strictly increasing cofinal sequence from $T$ and let $\left(\zeta_r\right)_{r\in\omega\setminus\left\{0\right\}}$ be a strictly increasing cofinal sequence from $\omega^{\alpha_j}$, so that $\omega^{\alpha_j}$ is homeomorphic to the topological disjoint union of the collection $\left(\zeta_r+1\right)_{r\in\omega\setminus\left\{0\right\}}$. For each $r\in\omega\setminus\left\{0\right\}$, choose $W_r\subseteq X^{\eta_r}_j$ with $W_r\cong\zeta_r+1$, and take
\[X=X^{\eta_0}_j\cup\bigcup_{r\in\omega\setminus\left\{0\right\}}W_r\cup\left\{\omega^\alpha\right\}.\]
Then $X$ is a homeomorphic copy of $\omega^{\alpha_j}\cdot 2$ in colour $j$.
\item
If $j_0=j_1=j_2$, then take $j$ to be their common value. We now use another argument from the proof of Weiss's theorem. Let $Z_1$ be the closure of $\left\{z_{\eta+1}:\eta\in T\right\}$ in $Z$ and take
\[X=\bigcup_{\eta\in T}X^\eta_j\cup Z_1\cup\left\{\omega^\alpha\right\}.\]
Then $X$ is a homeomorphic copy of $\omega^{\alpha_j}+1$ in colour $j$, and since $j=j_2$ we have $\operatorname{CB}\left(\alpha_j\right)=\gamma_n$.\qedhere
\end{itemize}
\end{proof}

\begin{proof}[Proof of Theorem \ref{distinguishing}]
Write $\alpha=\omega^{\gamma_1}+\omega^{\gamma_2}+\cdots+\omega^{\gamma_n}$ with $\gamma_1\geq\gamma_2\geq\dots\geq\gamma_n$, and let $\beta=\omega^\alpha\cdot m_0+1$.

Suppose first that $\operatorname{CB}\left(\alpha_0\right)\leq\operatorname{CB}\left(\alpha_i\right)$ for all $i\in k$. As in the proof of Theorem \ref{linksuccessor}, observe that by definition of the natural sum, there is a partition of $\left\{1,2,\dots,n\right\}$ into $k$ pieces $S_0,S_1,\dots,S_{k-1}$ such that for all $i\in k$, $\alpha_i=\sum_{j\in S_i}\omega^{\gamma_j}$. Moreover, since $\operatorname{CB}\left(\alpha_0\right)\leq\operatorname{CB}\left(\alpha_i\right)$ for all $i\in k$ we may assume that $n\in S_0$. Now define a colouring $c:\beta\rightarrow k$ as follows. If $\operatorname{CB}\left(x\right)<\alpha$ (i.e., $x\notin\left\{\omega^\alpha,\omega^\alpha\cdot 2,\dots,\omega^\alpha\cdot m_0\right\}$), then as in Theorem \ref{linksuccessor}, for each $i\in k$ set $c\left(x\right)=i$ if and only if
\[\omega^{\gamma_1}+\omega^{\gamma_2}+\cdots+\omega^{\gamma_{j-1}}\leq\operatorname{CB}\left(x\right)<\omega^{\gamma_1}+\omega^{\gamma_2}+\cdots+\omega^{\gamma_j}\]
for some $j\in S_i$. If $\operatorname{CB}\left(x\right)=\alpha$, then set $c\left(x\right)=0$. If $i\in k\setminus\left\{0\right\}$, then as in Theorem \ref{linksuccessor} $c^{-1}\left(\left\{i\right\}\right)$ cannot contain a homeomorphic copy of $\omega^{\alpha_i}+1$. To deal with the case $i=0$, let $\eta=\sum_{j\in S_0\setminus\left\{n\right\}}\omega^{\gamma_j}$. By the proof of Proposition \ref{rankcolouring},
\[c^{-1}\left(\left\{0\right\}\right)^{\left(\eta\right)}=\left\{x\in\beta:\operatorname{CB}\left(x\right)\geq\omega^{\gamma_1}+\omega^{\gamma_2}+\cdots+\omega^{\gamma_{n-1}}\right\}\cong\omega^{\omega^{\gamma_n}}\cdot m_0+1,\]
whereas $\left(\omega^{\alpha_0}\cdot\left(m_0+1\right)\right)^{\left(\eta\right)}\cong\omega^{\omega^{\gamma_n}}\cdot\left(m_0+1\right)$. It follows by part \ref{pseudodifficultcase} of Proposition \ref{pseudonegative} that $c^{-1}\left(\left\{0\right\}\right)$ cannot contain a homeomorphic copy of $\omega^{\alpha_0}\cdot\left(m_0+1\right)$.

Suppose instead that $\operatorname{CB}\left(\alpha_h\right)<\operatorname{CB}\left(\alpha_0\right)$ for some $h\in k$. If $m_0=1$, then by assumption $\operatorname{CB}\left(\alpha_0\right)\leq\operatorname{CB}\left(\alpha_i\right)$ for all $i\in\left\{1,2,\dots,l\right\}$, so $\operatorname{CB}\left(\alpha_h\right)<\operatorname{CB}\left(\alpha_i\right)$ for all $i\in\left\{1,2,\dots,l\right\}$ and we are done by Lemma \ref{distinguishinglemma}. So assume $m_0>1$. Then for each $p\in m_0$ apply Lemma \ref{distinguishinglemma} to obtain $j_p\in k$ and $X_p\subseteq c^{-1}\left(\left\{j_p\right\}\right)\cap\left[\omega^\alpha\cdot p+1,\omega^\alpha\cdot\left(p+1\right)\right]$ such that either $X_p\cong\omega^{\alpha_{j_p}}\cdot 2$, or $X_p\cong\omega^{\alpha_{j_p}}+1$ and $\operatorname{CB}\left(\alpha_{j_p}\right)\leq\operatorname{CB}\left(\alpha_i\right)$ for all $i\in k$. If $j_p=0$ for all $p\in m_0$, then $X_p\cong\omega^{\alpha_0}\cdot 2$ for all $p\in m_0$ and so $\bigcup_{p=0}^{m_0-1}X_p$ contains a homeomorphic copy of $\omega^{\alpha_0}\cdot\left(m_0+1\right)$, and we are done. So assume $j_q\neq 0$ for some $q\in m_0$. Now pick any $r\in m_0\setminus\left\{q\right\}$ and apply Lemma \ref{findinspare} to $\left[\omega^\alpha\cdot r+1,\omega^\alpha\cdot\left(r+1\right)\right)$. Since we would be done if $c^{-1}\left(\left\{j\right\}\right)$ contained a homeomorphic copy of $\omega^{\alpha_j+1}$ for some $j\in k$, we may assume that there exists $Y\subseteq c^{-1}\left(\left\{j_q\right\}\right)\cap\left[\omega^\alpha\cdot r+1,\omega^\alpha\cdot\left(r+1\right)\right)$ with $Y\cong\omega^{\alpha_{j_q}}$. Then $X_q\cup Y$ is a homeomorphic copy of $\omega^{\alpha_{j_q}}\cdot 2$ in colour $j_q$, which suffices.
\end{proof}

We leave the final few considerations pertaining to case \ref{proofquotedcase} for later.

\subsection{Arbitrary sequences of ordinals at most \texorpdfstring{$\omega_1$}{omega\_1}}\label{justoneomega1andinfiniteofcountable}

We now move on to cases \ref{independencecase}, \ref{justoneomega1} and \ref{infiniteofcountablecase} of the principle, in which no ordinal exceeds $\omega_1$ but either there are infinitely many ordinals or there is at least one ordinal equal to $\omega_1$. Here the arguments are less combinatorial and more set-theoretical than in the previous section, and stationary sets are ubiquitous.

We will cover cases \ref{justoneomega1} and \ref{infiniteofcountablecase} in this section and leave the independence results of case \ref{independencecase} to the next section.

To understand the relevance of club sets, recall Proposition \ref{ordertypehomeomorphic}. From that result it follows that given $X\subseteq\omega_1$, if $X$ is club then $X\cong\omega_1$, and in fact the converse also holds in this case.

The essential reason for the ubiquity of stationary sets in this section is the following result of Friedman \cite{friedman}.

\begin{theorem}[Friedman]\label{friedmanbasic}
Let $S\subseteq\omega_1$ be a stationary set, and let $\alpha\in\omega_1$. Then $S$ has a subset order-homeomorphic to $\alpha$.
\end{theorem}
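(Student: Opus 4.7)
The plan is to proceed by transfinite induction on $\alpha<\omega_1$, with the strengthened inductive hypothesis $(\star)_\alpha$: for every stationary $S\subseteq\omega_1$ there exists $Y\subseteq S$ order-homeomorphic to $\alpha$, and when $\alpha$ is a nonzero limit ordinal one can additionally arrange $\sup Y\in S$.

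The base case $\alpha=0$ is trivial. For the successor case $\alpha=\beta+1$ I split on $\beta$. If $\beta=0$ or $\beta$ is a successor, then by $(\star)_\beta$ there is $Y'\subseteq S$ order-homeomorphic to $\beta$, which has a maximum; any $\gamma\in S$ above $\max Y'$ (which exists because $S$ is unbounded) makes $Y=Y'\cup\{\gamma\}$ work. If $\beta$ is a limit, the ``moreover'' clause of $(\star)_\beta$ gives $Y'\subseteq S$ order-homeomorphic to $\beta$ with $\sup Y'\in S$; then $Y=Y'\cup\{\sup Y'\}$ is order-homeomorphic to $\beta+1$, since the adjoined top point is non-isolated in $Y$ (matching the topology of $\beta+1$) and $Y$ is closed in its least containing ordinal, so Proposition~\ref{ordertypehomeomorphic} applies.

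The limit case is the main obstacle. Fix a countable limit $\alpha$ and stationary $S$; the goal is $Y\subseteq S$ order-homeomorphic to $\alpha$ with $\sup Y\in S$. The derived set $S'=\{\gamma<\omega_1:S\cap\gamma\text{ is cofinal in }\gamma\}$ is a club in $\omega_1$ (since $S$ is unbounded), so $S\cap S'$ is stationary. Choose a presentation $\alpha=\sup_n\alpha_n$ with $(\alpha_n)$ a strictly increasing sequence of successor ordinals; then each piece $\beta_n=\alpha_{n+1}-\alpha_n$ is again a successor, and $\alpha$ decomposes topologically as the disjoint union $\bigsqcup_n[\alpha_n,\alpha_{n+1})$ of clopen subspaces each order-homeomorphic to $\beta_n$. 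The aim is to pick $\gamma\in S\cap S'$, a strictly increasing cofinal sequence $(\gamma_n)\subseteq S\cap\gamma$, and to fit copies $Y_n\subseteq S\cap(\gamma_n,\gamma_{n+1})$ of $\beta_n$; the union $Y=\bigcup_nY_n$ is then closed in $\gamma$ (no new limit points appear because the $\beta_n$ are successors), of order type $\alpha$, and has $\sup Y=\gamma\in S$.

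The technical difficulty is that $(\star)_{\beta_n}$ applies only to stationary subsets of $\omega_1$, whereas each $S\cap(\gamma_n,\gamma_{n+1})$ is bounded and hence non-stationary. I would overcome this via a countable elementary submodel argument. Let $M\prec H_{\omega_2}$ be countable with $S,\alpha,(\alpha_n)\in M$; the set of such $\gamma:=M\cap\omega_1$ is a club in $\omega_1$, so intersecting with $S\cap S'$ lets us arrange $\gamma\in S\cap S'$. By elementarity, for every $n\in\omega$ and every $\delta\in\gamma$, $M$ contains a set $Y\subseteq S\cap(\delta,\omega_1)$ order-homeomorphic to $\beta_n$; since $M$ is countable and $Y\in M$ is a countable set of ordinals, a standard argument gives $Y\subseteq M\cap\omega_1=\gamma$, so in fact $Y\subseteq S\cap(\delta,\gamma)$. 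This supplies the required $Y_n$ inside any prescribed initial segment of $\gamma$, so the recursion goes through. The reflection captured by the submodel $M$ is where the main work lies.
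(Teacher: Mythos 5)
The paper itself offers no proof of this statement---it quotes Friedman's theorem and later remarks only that Theorem~\ref{friedmangeneralisation} has an essentially identical proof---so your argument necessarily takes its own route, and it is essentially correct: the strengthened induction hypothesis (for limit $\alpha$, a copy whose supremum lies in $S$) is exactly the right bookkeeping for successors of limits, and the reflection step via a countable $M\prec H_{\omega_2}$ with $\gamma=M\cap\omega_1\in S$ does supply, for every $\delta<\gamma$ and every $n$, a copy of $\beta_n$ inside $S\cap(\delta,\gamma)$; this is the standard way to push the induction hypothesis, which speaks only of stationary sets, into a bounded piece of $S$ (one could equally avoid submodels by a closing-off argument showing that the set of such ``reflecting'' $\gamma$ contains a club, and note that what you need of $\left\{M\cap\omega_1\right\}$ is only that it contains a club). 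Two small repairs are needed, both routine. First, your blocks $\left[\alpha_n,\alpha_{n+1}\right)$ omit $\left[0,\alpha_0\right)$, so their order types sum to an ordinal that can be smaller than $\alpha$ (for $\alpha=\omega+\omega$ and $\alpha_n=\omega+n+1$ the blocks are singletons and sum to $\omega$); start the decomposition at $0$, taking $\left[0,\alpha_1\right)$ of successor type $\alpha_1$ as the first block. Second, you cannot fit $Y_n$ into a pre-chosen interval $\left(\gamma_n,\gamma_{n+1}\right)$---such an interval may even be disjoint from $S$---and your elementarity argument places $Y_n$ in $S\cap\left(\delta,\gamma\right)$ above a prescribed $\delta$ but not below a prescribed bound less than $\gamma$; so instead build the blocks recursively, requiring $Y_n\subseteq S\cap\left(\operatorname{max}\left\{\delta_n,\operatorname{max}Y_{n-1}\right\},\gamma\right)$ for a fixed cofinal sequence $\left(\delta_n\right)_{n\in\omega}$ in $\gamma$; the blocks are then consecutive, their union is cofinal in $\gamma$, it is closed below $\gamma$ because each block is a compact copy of a successor ordinal, and Proposition~\ref{ordertypehomeomorphic} gives an order-homeomorphic copy of $\alpha$ with supremum $\gamma\in S$. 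With these adjustments the induction closes and your proof is complete.
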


We will need a slightly more general version of this result. In order to state it we make the following definition.

\begin{definition}
Let $\lambda$ be an uncountable regular cardinal. Define
\[E^\lambda_\omega=\left\{x\in\lambda:\operatorname{cf}\left(x\right)=\omega\right\}.\]
\end{definition}

Note that $E^\lambda_\omega$ is stationary in $\lambda$.

Here is our generalisation of Friedman's theorem.

\begin{theorem}\label{friedmangeneralisation}
Let $\lambda$ be an uncountable regular cardinal, let $S\subseteq E^\lambda_\omega$ be stationary in $\lambda$, and let $\alpha\in\omega_1$. Then $S$ has a subset order-homeomorphic to $\alpha$.
\end{theorem}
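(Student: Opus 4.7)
My plan is to mimic Friedman's argument for the case $\lambda = \omega_1$, exploiting the fact that every element of $E^\lambda_\omega$ has cofinality $\omega$. First I would iteratively refine $S$ by repeatedly intersecting with its set of topological limit points in $\lambda$, thereby producing stationary subsets of arbitrary countable ``depth''. Specifically, define transfinitely $S^{(0)} = S$, $S^{(\gamma+1)} = \{s \in S^{(\gamma)} : S^{(\gamma)} \cap s \text{ is cofinal in } s\}$, and $S^{(\gamma)} = \bigcap_{\gamma' < \gamma} S^{(\gamma')}$ for limit $\gamma$. A routine transfinite induction shows that $S^{(\gamma)} = S \cap C_\gamma$ for some club $C_\gamma \subseteq \lambda$: in the successor step the set of limit points of a stationary set is a club, and in the limit step we take the intersection of fewer than $\lambda$ clubs. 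Consequently each $S^{(\gamma)}$ remains stationary for all $\gamma < \omega_1$.

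Next I would prove by transfinite induction on $\alpha < \omega_1$ the strengthened statement: for every $s \in S^{(\alpha)}$ and every $\delta < s$, there exists $X \subseteq S \cap (\delta, s]$ that is order-homeomorphic to $\alpha$, with $\max X = s$ when $\alpha$ is a successor and $\sup X = s$ (and $s \notin X$) when $\alpha$ is a non-zero limit. The theorem then follows by picking any $s \in S^{(\alpha)}$. The successor step $\alpha = \beta + 1$ is routine: if $\beta$ has a maximum element, use that $s \in S^{(\beta+1)}$ means $S^{(\beta)} \cap s$ is cofinal in $s$ to choose some $s' \in S^{(\beta)} \cap (\delta, s)$, apply the inductive hypothesis to $\beta$ at $s'$, and append $s$; if $\beta$ is a non-zero limit, apply the inductive hypothesis to $\beta$ directly at $s$ and append $s$, whereby the closedness required at $s$ is supplied precisely by $s$ itself now being a member of the set.

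The main obstacle is the limit step: given $s \in S^{(\alpha)}$, I must assemble $X \subseteq S \cap (\delta, s)$ of order type $\alpha$ with $\sup X = s$ while ensuring $X$ is closed in $s$, with no unwanted limit points appearing at the ``seams'' between stages. The key idea is to decompose $\alpha = \sum_n \gamma_n$ as a countable sum of non-zero \emph{successor} ordinals $\gamma_n$, which is possible because successor ordinals are cofinal in every countable limit: choose a strictly increasing cofinal sequence of successors $\zeta_n < \alpha$ and set $\gamma_0 = \zeta_0$ and $\gamma_n = \zeta_n - \zeta_{n-1}$ for $n \geq 1$ (each $\gamma_n$ is again a successor, since otherwise $\zeta_{n-1} + \gamma_n = \zeta_n$ would be a limit). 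Since $\gamma_n + 1 \leq \alpha$ and $s \in S^{(\alpha)} \subseteq S^{(\gamma_n+1)}$, the set $S^{(\gamma_n)} \cap s$ is cofinal in $s$, so I can pick $\tau_n \in S^{(\gamma_n)}$ strictly increasing with $\tau_0 > \delta$ and $\tau_n \to s$. Applying the inductive hypothesis to $\gamma_n$ at $\tau_n$ with lower bound $\tau_{n-1}$ (taking $\tau_{-1} = \delta$) yields $X_n \subseteq S \cap (\tau_{n-1}, \tau_n]$ order-homeomorphic to $\gamma_n$ with $\max X_n = \tau_n$. Then $X = \bigcup_n X_n$ has the right order type and supremum, and closedness in $s$ is automatic because each seam point $\tau_n = \max X_n$ lies in $X_n \subseteq X$, while all other limit points of $X$ in $s$ lie within a single $X_n$ and hence in $X$ by inductive closedness; this is precisely why the successor decomposition of $\alpha$ is essential.
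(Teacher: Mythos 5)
Your proof is correct, and it is essentially the approach the paper relies on: the paper's own ``proof'' is just the remark that Friedman's induction for $\omega_1$ goes through verbatim, and your iterated limit-point derivatives $S^{(\gamma)}$ (stationary since each equals $S$ intersected with a club), the strengthened anchored statement, and the decomposition of a limit $\alpha$ into a sum of successors are exactly that argument adapted to $\lambda$. You also use the hypothesis $S\subseteq E^\lambda_\omega$ precisely where it is indispensable, namely to extract an $\omega$-sequence $\tau_n\to s$ (and, implicitly, to know every point of $S$ is a limit ordinal when identifying $S^{(\gamma)}$ with $S\cap C_\gamma$); the only omissions are the trivial base cases $\alpha\le 1$.
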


\begin{proof}
The proof is essentially identical to the proof of Friedman's theorem \cite{friedman}.
\end{proof}

Our final introductory result is a well-known property of stationary sets.

\begin{lemma}\label{stationaryunions}
Let $\lambda$ be an uncountable regular cardinal, let $S\subseteq\lambda$ be stationary, and let $c:S\rightarrow\kappa$ for some cardinal $\kappa<\lambda$. Then $c^{-1}\left(\left\{i\right\}\right)$ is stationary in $\lambda$ for some $i\in\kappa$.
\end{lemma}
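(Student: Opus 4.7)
The plan is to argue by contradiction using the standard fact that in an uncountable regular cardinal $\lambda$, the intersection of fewer than $\lambda$ club subsets is again club.

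Assume for contradiction that for every $i \in \kappa$, the preimage $c^{-1}(\{i\})$ fails to be stationary in $\lambda$. Then for each such $i$ I would pick a club $C_i \subseteq \lambda$ witnessing non-stationarity, i.e.\ with $C_i \cap c^{-1}(\{i\}) = \emptyset$. The next step is to form $C = \bigcap_{i \in \kappa} C_i$. Since $\kappa < \lambda$ and $\lambda$ is regular and uncountable, $C$ is a club subset of $\lambda$; this is the one nontrivial ingredient and is exactly where the hypothesis $\kappa < \lambda$ is used (one shows closedness from the closedness of each $C_i$, and unboundedness by the usual diagonal construction of length $\kappa \cdot \omega < \lambda$).

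Since $S$ is stationary, pick some $x \in S \cap C$ and let $i = c(x) \in \kappa$. Then $x \in c^{-1}(\{i\})$, but also $x \in C \subseteq C_i$, contradicting $C_i \cap c^{-1}(\{i\}) = \emptyset$. Hence some $c^{-1}(\{i\})$ is stationary, proving the lemma.

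The only potential obstacle is the verification that an intersection of fewer than $\lambda$ clubs is club, but for $\lambda$ uncountable regular this is a textbook result and can simply be cited; no further work is needed.
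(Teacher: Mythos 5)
Your proof is correct and is essentially the same argument the paper gives: the paper's proof simply cites the fact that the intersection of $\kappa<\lambda$ club subsets of $\lambda$ is club, and your contradiction argument is the standard way to deduce the lemma from that fact.
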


\begin{proof}
This follows easily from the fact that if $C_i\subseteq\lambda$ is club for all $i\in\kappa$ then $\bigcap_{i\in\kappa}C_i$ is also club.
\end{proof}

We are now ready to deal with cases \ref{justoneomega1} and \ref{infiniteofcountablecase} of the principle. The result for case \ref{infiniteofcountablecase} is an easy consequence of Theorem \ref{friedmangeneralisation} and Lemma \ref{stationaryunions}.

\begin{theorem}\label{infiniteofcountable}
Let $\kappa\geq\aleph_0$ be a cardinal, and let $\alpha_i$ be an ordinal with $2\leq\alpha_i<\omega_1$ for all $i\in\kappa$. Then
\[P^{top}\left(\alpha_i\right)_{i\in\kappa}=\kappa^+.\]
\end{theorem}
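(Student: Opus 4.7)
The plan is to prove both $\kappa^+ \rightarrow (top\,\alpha_i)^1_{i\in\kappa}$ and $\beta \nrightarrow (top\,\alpha_i)^1_{i\in\kappa}$ for every $\beta < \kappa^+$, the latter being essentially trivial and the former reducing quickly to the tools just set up.

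For the negative direction, I would simply observe that any $\beta < \kappa^+$ has $|\beta| \leq \kappa$, so there is an injection $c:\beta\rightarrow\kappa$, and the preimage of each colour is a singleton, which cannot contain a homeomorphic copy of $\alpha_i$ since $\alpha_i\geq 2$.

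For the positive direction, let $c:\kappa^+\rightarrow\kappa$. Since $\kappa^+$ is an uncountable regular cardinal, the set $E^{\kappa^+}_\omega$ is stationary in $\kappa^+$. Restricting $c$ to $E^{\kappa^+}_\omega$ and applying Lemma \ref{stationaryunions} (which applies because $\kappa<\kappa^+$), I obtain some $i\in\kappa$ such that $S:=c^{-1}\left(\left\{i\right\}\right)\cap E^{\kappa^+}_\omega$ is stationary in $\kappa^+$. Since $\alpha_i<\omega_1$, Theorem \ref{friedmangeneralisation} then yields a subset of $S$ order-homeomorphic to $\alpha_i$; in particular it is homeomorphic to $\alpha_i$ and monochromatic of colour $i$, as required.

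There is no real obstacle here — both inequalities follow immediately from machinery already assembled. If anything, the only point that requires care is ensuring that Lemma \ref{stationaryunions} is applicable, which uses precisely the assumption $\kappa<\kappa^+$, and that the restriction to $E^{\kappa^+}_\omega$ (rather than working with an arbitrary stationary set) is legitimate — both are immediate. This is why case \ref{infiniteofcountablecase} is one of the easier cases of the principle.
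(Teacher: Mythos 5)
Your proposal is correct and follows essentially the same route as the paper: the negative direction via an injection $\beta\rightarrow\kappa$, and the positive direction by applying Lemma \ref{stationaryunions} to the colouring restricted to $E^{\kappa^+}_\omega$ and then Theorem \ref{friedmangeneralisation} to the resulting stationary monochromatic set. No gaps; the points you flag (applicability of Lemma \ref{stationaryunions} since $\kappa<\kappa^+$, and working inside $E^{\kappa^+}_\omega$ so that Theorem \ref{friedmangeneralisation} applies) are exactly the right ones and are handled correctly.
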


\begin{proof}
Clearly if $\zeta<\kappa^+$ then $\zeta\nrightarrow\left(top\,\alpha_i\right)^1_{i\in\kappa}$ by considering an injection $\zeta\rightarrow\kappa$.

To see that $\kappa^+\rightarrow\left(top\,\alpha_i\right)^1_{i\in\kappa}$, let $c:\kappa^+\rightarrow\kappa$. Then by Lemma \ref{stationaryunions} there exists $i\in\kappa$ such that $c^{-1}\left(\left\{i\right\}\right)\cap E^{\kappa^+}_\omega$ is stationary in $\kappa^+$, which by Theorem \ref{friedmangeneralisation} contains a homeomorphic copy of $\alpha_i$.
\end{proof}

The proof for case \ref{justoneomega1} is a little trickier.

\begin{theorem}\label{omega1equal}
Let $\kappa$ be a cardinal, let $\alpha_r=\omega_1$ for some $r\in\kappa$, and let $\alpha_i$ be an ordinal with $2\leq\alpha_i<\omega_1$ for all $i\in\kappa\setminus\left\{r\right\}$. Then
\[P^{top}\left(\alpha_i\right)_{i\in\kappa}=\operatorname{max}\left\{\omega_1,\kappa^+\right\}.\]
\end{theorem}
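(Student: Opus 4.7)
The plan is to prove matching lower and upper bounds, writing $\lambda=\operatorname{max}\left\{\omega_1,\kappa^+\right\}$.

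For the lower bound, let $\zeta<\lambda$; I exhibit a colouring witnessing $\zeta\nrightarrow\left(top\,\alpha_i\right)^1_{i\in\kappa}$. If $\kappa\geq\aleph_0$, then $\lambda=\kappa^+$, so $\left|\zeta\right|\leq\kappa$ and any injection $c\colon\zeta\rightarrow\kappa$ has colour classes of size at most $1$, too small for any $\alpha_i\geq 2$. If $\kappa<\aleph_0$, then $\lambda=\omega_1$; colouring all of $\zeta$ with $r$ makes $c^{-1}\left(\left\{r\right\}\right)=\zeta$ countable and therefore unable to contain a copy of $\omega_1=\alpha_r$, while every other colour class is empty.

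For the upper bound, fix $c\colon\lambda\rightarrow\kappa$ and assume for contradiction that no $c^{-1}\left(\left\{i\right\}\right)$ contains a copy of $\alpha_i$. For each $i\in\kappa\setminus\left\{r\right\}$, $\alpha_i$ is a countable ordinal, so Theorem \ref{friedmangeneralisation} forces $c^{-1}\left(\left\{i\right\}\right)\cap E^\lambda_\omega$ to be non-stationary; pick a club $C_i\subseteq\lambda$ disjoint from it. Since $\left|\kappa\setminus\left\{r\right\}\right|<\lambda$ and $\lambda$ is regular uncountable, $C=\bigcap_{i\in\kappa\setminus\left\{r\right\}}C_i$ is still a club in $\lambda$, and by construction $C\cap E^\lambda_\omega\subseteq c^{-1}\left(\left\{r\right\}\right)$.

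It remains to extract a homeomorphic copy of $\omega_1$ from $C\cap E^\lambda_\omega$ in order to contradict the assumption on $c^{-1}\left(\left\{r\right\}\right)$. When $\lambda=\omega_1$, every countable limit ordinal has cofinality $\omega$, so $E^{\omega_1}_\omega$ is itself a club in $\omega_1$; thus $C\cap E^{\omega_1}_\omega$ is a club contained in $c^{-1}\left(\left\{r\right\}\right)$ and is order-homeomorphic to $\omega_1$ by Proposition \ref{ordertypehomeomorphic}. When $\lambda=\kappa^+>\omega_1$, the set $C\cap E^\lambda_\omega$ is stationary and hence unbounded in $\lambda$, and I would construct a strictly increasing sequence $\left(x_\xi\right)_{\xi<\omega_1}$ inside it by transfinite recursion, choosing $x_{\xi+1}$ from $C\cap E^\lambda_\omega$ above $x_\xi$ and setting $x_\mu=\sup_{\xi<\mu}x_\xi$ at countable limits $\mu$; closedness of $C$ keeps $x_\mu\in C$, while $\operatorname{cf}\left(x_\mu\right)=\operatorname{cf}\left(\mu\right)=\omega$ keeps $x_\mu\in E^\lambda_\omega$. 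The range $R=\left\{x_\xi:\xi<\omega_1\right\}$ is then a closed subset of $\sup R$ of order type $\omega_1$, hence order-homeomorphic to $\omega_1$ by Proposition \ref{ordertypehomeomorphic}. The main subtlety is this last step: Friedman's theorem only yields countable targets, so a separate mechanism is needed for colour $r$; the club-intersection trick converts the failure of the other colour classes into an actual club inside $c^{-1}\left(\left\{r\right\}\right)$, which the continuous recursion then refines into a closed copy of $\omega_1$.
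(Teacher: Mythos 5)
Your proposal is correct and takes essentially the same route as the paper: the same two lower-bound colourings, and for the upper bound the same combination of Theorem \ref{friedmangeneralisation} for the countable colours with a club/stationary argument that leaves a club of cofinality-$\omega$ points inside colour $r$, from which a closed copy of $\omega_1$ is obtained via Proposition \ref{ordertypehomeomorphic}. The differences are only organisational: the paper argues directly via the set $Z$ and Lemma \ref{stationaryunions} instead of by contradiction with an explicit intersection of clubs, and it extracts the copy of $\omega_1$ by taking the derivative of an initial segment of a club of multiples of $\omega$ rather than by your continuous transfinite recursion.
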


\begin{proof}
As in the proof of Theorem \ref{infiniteofcountable}, if $\zeta<\kappa^+$ then $\zeta\nrightarrow\left(top\,\alpha_i\right)^1_{i\in\kappa}$. Additionally, if $\zeta<\omega_1$ then $\zeta\nrightarrow\left(top\,\alpha_i\right)^1_{i\in\kappa}$ by considering the constant colouring with colour $r$.

To see that $\operatorname{max}\left\{\omega_1,\kappa^+\right\}\rightarrow\left(top\,\alpha_i\right)^1_{i\in\kappa}$, first observe that the case $\kappa<\aleph_0$ follows from the case $\kappa=\aleph_0$. So we may assume $\kappa\geq\aleph_0$, implying that $\operatorname{max}\left\{\omega_1,\kappa^+\right\}=\kappa^+$. So let $c:\kappa^+\rightarrow\kappa$. Then let
\[Z=\left(c^{-1}\left(\left\{r\right\}\right)\cap E^{\kappa^+}_\omega\right)\cup\left(\kappa^+\setminus E^{\kappa^+}_\omega\right).\]

Suppose first that $Z$ has a subset $C$ that is club in $\kappa^+$. Then $C\cap\left\{\omega\cdot x:x\in\kappa^+\right\}$ is also club. Let the initial segment of this set of order type $\omega_1$ be $Y$, and let $X=Y^\prime$. Then $X\cong\omega_1$ by Proposition \ref{ordertypehomeomorphic}, but in addition $X\subseteq E^{\kappa^+}_\omega$ and hence $X\subseteq c^{-1}\left(\left\{r\right\}\right)$ by definition of $Z$.

Suppose instead that $Z$ has no subset that is club in $\kappa^+$. Then $\kappa^+\setminus Z$ is stationary in $\kappa^+$. But by definition of $Z$,
\[\kappa^+\setminus Z=\bigcup_{i\in\kappa\setminus\left\{r\right\}}\left(c^{-1}\left(\left\{i\right\}\right)\cap E^{\kappa^+}_\omega\right).\]
Hence by Lemma \ref{stationaryunions} there exists $i\in\kappa\setminus\left\{r\right\}$ such that $c^{-1}\left(\left\{i\right\}\right)\cap E^{\kappa^+}_\omega$ is stationary in $\kappa^+$, which by Theorem \ref{friedmangeneralisation} contains a homeomorphic copy of $\alpha_i$.
\end{proof}

\subsection{Independence results}\label{independenceresults}

We now move on to case \ref{independencecase} of the principle, in which no ordinal exceeds $\omega_1$ and two or more ordinals are equal to $\omega_1$. To begin with we quote the following result, a proof of which can be found in Weiss's article \cite[Theorem 2.8]{weiss}. This follows easily from the fact that $\omega_1$ may be written as a disjoint union of two stationary sets.

\begin{proposition}\label{omega2provable}
If $\beta\in\omega_2$ then $\beta\nrightarrow\left(top\,\omega_1\right)^1_2$.
\end{proposition}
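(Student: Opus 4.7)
The plan is to prove by transfinite induction on $\beta < \omega_2$ that there exists a colouring $c : \beta \to 2$ with no monochromatic subspace homeomorphic to $\omega_1$. The key structural reduction is that if $X \subseteq \beta$ with $X \cong \omega_1$, then since $\omega_1 = \omega^{\omega_1}$ is order-reinforcing by Theorem \ref{orderreinforcing}, $X$ contains a subset $Y$ which is closed and unbounded in $\gamma := \sup Y$ (by Proposition \ref{ordertypehomeomorphic}), forcing $\operatorname{cf}(\gamma) = \omega_1$. So it suffices to defeat monochromatic clubs in every $\gamma \le \beta$ of cofinality $\omega_1$.

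The cases $\beta < \omega_1$ are vacuous. For $\beta = \omega_1$, write $\omega_1 = S_0 \sqcup S_1$ with both $S_i$ stationary and colour $x$ by the unique $i$ with $x \in S_i$; a monochromatic club would lie in one $S_i$, contradicting that the complementary $S_{1-i}$ is stationary. For successor $\beta = \beta' + 1$, extend an inductive good colouring of $\beta'$ by any value at the top element $\beta'$; since $\omega_1$ has no maximum, no copy of $\omega_1$ in $\beta$ contains $\beta'$. When $\operatorname{cf}(\beta) = \omega$, fix an increasing cofinal $\omega$-sequence $(\beta_n)_{n < \omega}$ in $\beta$ with $\beta_0 = 0$ and $\operatorname{cf}(\beta_n) = \omega$ for each $n > 0$, and colour each block $[\beta_n, \beta_{n+1})$ via the inductive good colouring of the order-homeomorphic ordinal $\beta_{n+1} - \beta_n$; any monochromatic $X \cong \omega_1$ has $\operatorname{cf}(\sup X) = \omega_1$, so $\sup X$ lies strictly inside some $(\beta_n, \beta_{n+1})$, making the cofinal tail of $X$ a monochromatic copy of $\omega_1$ inside that block and contradicting inductivity.

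The main case is $\operatorname{cf}(\beta) = \omega_1$. Fix a continuous strictly increasing cofinal sequence $(\beta_\alpha)_{\alpha < \omega_1}$ with $\beta_0 = 0$, arranged so that $\operatorname{cf}(\beta_\alpha) \ne \omega_1$ for every $\alpha < \omega_1$ (automatic at limit $\alpha$, and ensurable at successor $\alpha$). Fix a stationary partition $\omega_1 = S_0 \sqcup S_1$ and, by induction, a good colouring $\tilde c_\alpha$ of $\beta_{\alpha+1} - \beta_\alpha$ for each $\alpha$. Set $c(\beta_\alpha) = i$ when $\alpha \in S_i$, and $c(x) = \tilde c_\alpha(x - \beta_\alpha)$ for $x \in (\beta_\alpha, \beta_{\alpha+1})$. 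For a putative monochromatic $X \cong \omega_1$ in colour $i$ with club $Y \subseteq X$ in $\gamma := \sup Y$: if $\gamma < \beta$, the cofinality restriction on the $\beta_\alpha$ forces $\gamma \in (\beta_\alpha, \beta_{\alpha+1})$ for some $\alpha$, and the tail argument from the previous case applies; if $\gamma = \beta$, then $Y \cap \{\beta_\alpha : \alpha < \omega_1\}$ is the intersection of two clubs in $\beta$ and hence club, so $\{\alpha : \beta_\alpha \in Y\}$ is club in $\omega_1$ and meets $S_{1-i}$, producing some $\beta_\alpha \in Y$ coloured $1 - i$. The main obstacle is aligning the stationary-partition obstruction (for $\sup X = \beta$) with the inductive per-block obstruction (for $\sup X < \beta$); this is exactly what the cofinality restriction on the $\beta_\alpha$ secures, by preventing $\sup X$ from landing on a ``seam''.
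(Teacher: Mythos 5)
Your proposal is correct in substance, and it is a genuinely different treatment from the paper's: the paper simply quotes Weiss's survey for this proposition, remarking that it follows from the partition of $\omega_1$ into two stationary sets, whereas you give a self-contained transfinite induction on $\beta<\omega_2$, splitting by cofinality and recycling good colourings block-by-block, with the stationary partition used only on the ``seams'' when $\operatorname{cf}\left(\beta\right)=\omega_1$. The reduction you set up first --- via order-reinforcement of $\omega_1=\omega^{\omega_1}$ (Theorem \ref{orderreinforcing}) and Proposition \ref{ordertypehomeomorphic}, every monochromatic homeomorph of $\omega_1$ contains a monochromatic $Y$ that is club in $\gamma=\sup Y$ with $\operatorname{cf}\left(\gamma\right)=\omega_1$ --- is exactly what makes the block-and-seam analysis work, and both subcases $\gamma<\beta$ and $\gamma=\beta$ of the main case are handled correctly (in the latter, the preimage $\left\{\alpha:\beta_\alpha\in Y\right\}$ is indeed club in $\omega_1$ by continuity of the sequence, so it meets $S_{1-i}$). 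What you buy over the paper's citation is an explicit, elementary construction; what the citation buys is brevity.

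Two statements should be patched, though neither damages the argument. First, in the successor case it is not true that no copy of $\omega_1$ in $\beta'+1$ contains $\beta'$: since $\omega_1$ is homeomorphic to the disjoint union of $\left[0,\omega\right]$ and $\left[\omega+1,\omega_1\right)$, a homeomorph of $\omega_1$ can contain the top point as an isolated point, and for the same reason $\operatorname{cf}\left(\sup X\right)$ need not be $\omega_1$ for a mere homeomorph $X$ (relevant to your $\operatorname{cf}\left(\beta\right)=\omega$ case). What is true, and what you actually need, is that the derived club $Y$ omits $\beta'$ and satisfies $\operatorname{cf}\left(\sup Y\right)=\omega_1$; so run those two cases through $Y$ rather than $X$. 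Second, in the case $\operatorname{cf}\left(\beta\right)=\omega$ the requirement $\operatorname{cf}\left(\beta_n\right)=\omega$ for all $n>0$ cannot always be met --- for $\beta=\omega_1+\omega$ every cofinal $\omega$-sequence is eventually made of successor ordinals --- but all your argument uses is $\operatorname{cf}\left(\beta_n\right)\neq\omega_1$ (exactly the condition you state in the main case), which is always arrangeable by passing to successors; alternatively the restriction can be dropped entirely, since only finitely many blocks lie below any seam $\beta_n$, so if $\sup Y=\beta_n$ the tail of $Y$ already lies in the top block $\left[\beta_{n-1},\beta_n\right)$ and the same inductive contradiction applies.
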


\begin{corollary}\label{omega2geq}
Let $\kappa$ be a cardinal, and let $\alpha_i$ be an ordinal with $2\leq\alpha_i\leq\omega_1$ for all $i\in\kappa$. Suppose $\alpha_r,\alpha_s=\omega_1$ for some distinct $r,s\in\kappa$. Then
\[P^{top}\left(\alpha_i\right)_{i\in\kappa}\geq\operatorname{max}\left\{\omega_2,\kappa^+\right\}.\]
\end{corollary}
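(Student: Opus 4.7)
The plan is to establish the two lower bounds $\omega_2$ and $\kappa^+$ separately, and then combine them. The only real ingredients are Proposition \ref{omega2provable} and a trivial injection argument, so there is no substantial obstacle; this is essentially bookkeeping to convert a $2$-colour counterexample and a cardinality counterexample into $\kappa$-colour counterexamples matching the given sequence.

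First, I would show $P^{top}(\alpha_i)_{i\in\kappa}\geq\omega_2$. Let $\beta<\omega_2$. By Proposition \ref{omega2provable}, there is a colouring $d:\beta\rightarrow 2$ such that neither $d^{-1}(\{0\})$ nor $d^{-1}(\{1\})$ contains a homeomorphic copy of $\omega_1$. Define $c:\beta\rightarrow\kappa$ by sending $d^{-1}(\{0\})$ to $r$, $d^{-1}(\{1\})$ to $s$, and (if $\kappa>2$) leaving the other colour classes empty. Then for $i\in\{r,s\}$ the class $c^{-1}(\{i\})$ cannot contain a copy of $\omega_1=\alpha_i$, and for $i\in\kappa\setminus\{r,s\}$ the class is empty, hence cannot contain a copy of $\alpha_i$ since $\alpha_i\geq 2$. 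Thus $\beta\nrightarrow(top\,\alpha_i)^1_{i\in\kappa}$.

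Next I would show $P^{top}(\alpha_i)_{i\in\kappa}\geq\kappa^+$. Let $\beta<\kappa^+$, so $|\beta|\leq\kappa$. Fix an injection $f:\beta\rightarrow\kappa$ and set $c=f$. Then each colour class $c^{-1}(\{i\})$ has at most one element, and so cannot contain a homeomorphic copy of $\alpha_i$ since $\alpha_i\geq 2$. Hence $\beta\nrightarrow(top\,\alpha_i)^1_{i\in\kappa}$.

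Combining the two bounds gives $P^{top}(\alpha_i)_{i\in\kappa}\geq\max\{\omega_2,\kappa^+\}$, which is the desired conclusion. The argument requires no care beyond checking that the bad colourings continue to be bad when we pad out to $\kappa$ colours, which is immediate because $\alpha_i\geq 2$ rules out empty or singleton colour classes as homogeneous sets.
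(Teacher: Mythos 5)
Your proposal is correct and follows the paper's own argument: the $\kappa^+$ bound via an injection of $\zeta<\kappa^+$ into the colour set, and the $\omega_2$ bound by transferring the $2$-colour counterexample of Proposition \ref{omega2provable} to the colours $r,s$ (the condition $\alpha_i\geq 2$ handling the remaining empty or singleton classes). No issues.
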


\begin{proof}
Clearly if $\zeta<\kappa^+$ then $\zeta\nrightarrow\left(top\,\alpha_i\right)^1_{i\in\kappa}$, and if $\zeta<\omega_2$ then $\zeta\nrightarrow\left(top\,\omega_1\right)^1_2$ by Proposition \ref{omega2provable} and hence $\zeta\nrightarrow\left(top\,\alpha_i\right)^1_{i\in\kappa}$.
\end{proof}

We shall now see that, modulo a large cardinal assumption, this is the strongest $\mathsf{ZFC}$-provable statement applicable to case \ref{independencecase}. Recall from the statement of the principle that we write ``$P_\kappa=x$'' for the statement, ``$\kappa$ is a cardinal, and for all sequences $\left(\alpha_i\right)_{i\in\kappa}$ of ordinals, if $2\leq\alpha_i\leq\omega_1$ for all $i\in\kappa$ and $\alpha_r,\alpha_s=\omega_1$ for some distinct $r,s\in\kappa$, then $P^{top}\left(\alpha_i\right)_{i\in\kappa}=x$'', and likewise for ``$P_\kappa\geq x$''.

In one direction, we use the following result of Prikry and Solovay \cite{prikrysolovay}.

\begin{theorem}[Prikry--Solovay]
Suppose $V=L$ and let $\beta$ be any ordinal. Then
\[\beta\nrightarrow\left(top\,\omega_1\right)^1_2.\]
\end{theorem}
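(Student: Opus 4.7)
The plan is to exploit the combinatorial power of $V = L$ --- chiefly the $\Diamond$-principles available on every stationary subset of every uncountable successor cardinal, together with $\square$ for coherence across the cardinal hierarchy --- to build, by transfinite induction on $\beta$, a witnessing coloring $c : \beta \rightarrow 2$. The first step is a structural reduction. Since $\omega_1$ is a fixed point of ordinal exponentiation, $\omega_1 = \omega^{\omega_1}$, Theorem \ref{orderreinforcing} tells us that $\omega_1$ is order-reinforcing, so whenever $X \subseteq \beta$ satisfies $X \cong \omega_1$ there is a subset $Y \subseteq X$ that is order-homeomorphic to $\omega_1$; by Proposition \ref{ordertypehomeomorphic}, $Y$ is then closed cofinal in $\sigma := \sup Y$, forcing $\operatorname{cf}\left(\sigma\right) = \omega_1$. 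It therefore suffices to construct a coloring in which no such $Y$ is monochromatic, and by monotonicity of the partition relation in $\beta$ the problem reduces to doing this on each regular uncountable cardinal $\lambda$.

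The base cases $\lambda \leq \omega_2$ do not require $V = L$: for $\lambda = \omega_1$ we partition $\omega_1$ into two disjoint stationary sets (neither is club, so by the reduction neither contains a closed cofinal order-homeomorphic copy of $\omega_1$), and for $\lambda = \omega_2$ we invoke Proposition \ref{omega2provable}. For the inductive step at a regular $\lambda \geq \omega_3$, fix a $\Diamond$-sequence $\langle A_\sigma : \sigma \in S \rangle$ on the stationary set $S = \{\sigma < \lambda : \operatorname{cf}\left(\sigma\right) = \omega_1\}$, provided by $V = L$. Build $c : \lambda \rightarrow 2$ by transfinite recursion: on each interval between successive points of $S$, use the coloring supplied by the inductive hypothesis on a smaller ordinal; at each $\sigma \in S$, whenever $A_\sigma$ is closed cofinal in $\sigma$ of order type $\omega_1$ and the values of $c$ already determined on $A_\sigma$ are constant with value $i$, set $c\left(\sigma\right) = 1 - i$, and otherwise set $c\left(\sigma\right)$ arbitrarily.

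The main obstacle lies in the verification. If $Y \cong \omega_1$ were $c$-monochromatic and closed cofinal in some $\sigma_\star \in S$, one would like $\Diamond$ applied to $Y$ to furnish some $\sigma < \sigma_\star$ with $A_\sigma = Y \cap \sigma$ closed cofinal in $\sigma$ of order type $\omega_1$, contradicting the forced value of $c\left(\sigma\right)$. However, the limit points of $Y$ below $\sigma_\star$ all have cofinality $\omega$ rather than $\omega_1$, so none lies in $S$, and naive $\Diamond$ guessing at points of cofinality $\omega_1$ does not directly apply. Bridging this cofinality mismatch is precisely where the full strength of $V = L$ is needed: one appeals to a richer guessing device --- for instance Jensen's $\Diamond^+$ combined with a $\square$-coherent choice of closed cofinal sequence in $\sigma_\star$ of order type $\omega_1$, or a direct fine-structural argument --- to predict the normal enumeration of $Y$ at $\sigma_\star$ itself and diagonalize along the predicted sequence. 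The fine-structural content of $L$ supplies exactly the coherence required to carry this out uniformly across the relevant $\sigma$, closing the argument and completing the induction.
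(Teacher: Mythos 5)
The paper does not prove this theorem at all: it is quoted as a known result of Prikry and Solovay, with a citation, and only its consequences are used. So the relevant question is whether your sketch would stand on its own, and as written it does not. The fatal problem is in the inductive step at a regular $\lambda\geq\omega_3$. Your diagonalization acts only at the points $\sigma\in S$ of cofinality $\omega_1$, by choosing the single value $c\left(\sigma\right)$. But a candidate monochromatic set $Y$ order-homeomorphic to $\omega_1$ has no largest element, so $\sigma_\star=\sup Y\notin Y$, and the value $c\left(\sigma_\star\right)$ is simply irrelevant to whether $Y$ is monochromatic. Worse, by the time the recursion reaches $\sigma_\star$ every point of $Y$ has already received its colour, so no guessing device at $\sigma_\star$ --- however strong, $\Diamond^+$, $\square$-coherent or fine-structural --- can ``diagonalize along the predicted sequence'': that would require retroactively changing colours fixed at earlier stages. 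And a single decision at $\sigma_\star$ could not suffice anyway, since there are many pairwise almost disjoint closed copies of $\omega_1$ cofinal in the same $\sigma_\star$, of both colours. Your closing paragraph essentially concedes this and substitutes an appeal to unspecified machinery for the actual argument; but that missing coherence is precisely the entire content of the theorem, not a technical detail to be outsourced.

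There is also a structural weakness even before the verification: colouring each interval between successive points of $S$ by an inductively obtained colouring only controls copies of $\omega_1$ that lie inside a single interval. A dangerous copy $Y$ closed cofinal in some $\sigma_\star$ of cofinality $\omega_1$ typically meets cofinally many intervals below $\sigma_\star$, each in a countable piece, and countable pieces impose no constraint on the interval colourings; so the induction hypothesis gives you nothing about such $Y$, and all the work is concentrated in the step you left open. The known proof avoids stage-by-stage guessing altogether: in $L$ one uses a globally coherent ($\square$-like) assignment of clubs $C_\sigma$ of order type $\omega_1$ to the relevant ordinals $\sigma$ and defines the colouring uniformly from invariants of this coherent system, arranging directly that both colours occur on every closed cofinal copy of $\omega_1$; coherence across different $\sigma$ is built into the definition of the colouring rather than recovered afterwards. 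Your initial reduction (via order-reinforcement of $\omega_1$, Proposition \ref{ordertypehomeomorphic}, downward inheritance of the negative relation, and the $\mathsf{ZFC}$ cases $\lambda\leq\omega_2$) is fine, but the heart of the proof is missing.
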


\begin{corollary}\label{independenceequality}
If $\mathsf{ZFC}$ is consistent, then so is
\[\mathsf{ZFC}+\text{``for all cardinals $\kappa\geq 2$, $P_\kappa=\infty$''}.\pushQED{\qed}\qedhere\popQED\]
\end{corollary}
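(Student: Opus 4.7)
The plan is to prove this by relativising to the constructible universe $L$, where the Prikry--Solovay theorem applies in its strongest form.

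First, recall that if $\mathsf{ZFC}$ is consistent then so is $\mathsf{ZFC}+V=L$, so it is enough to show that $V=L$ implies ``for all cardinals $\kappa\geq 2$, $P_\kappa=\infty$''. So I would work in $L$ and invoke the Prikry--Solovay theorem to conclude that $\beta\nrightarrow\left(top\,\omega_1\right)^1_2$ for every ordinal $\beta$, i.e.\ $P^{top}\left(\omega_1,\omega_1\right)=\infty$.

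Next, I would upgrade this to the full statement by a monotonicity argument. Fix a cardinal $\kappa\geq 2$ and a sequence $\left(\alpha_i\right)_{i\in\kappa}$ with $2\leq\alpha_i\leq\omega_1$ and $\alpha_r=\alpha_s=\omega_1$ for some distinct $r,s\in\kappa$. Given any ordinal $\beta$ and any colouring $c:\beta\rightarrow 2$, I would define $\tilde c:\beta\rightarrow\kappa$ by sending colour $0$ to $r$ and colour $1$ to $s$. If $\beta\rightarrow\left(top\,\alpha_i\right)^1_{i\in\kappa}$ held, then there would exist $i\in\kappa$ and $X\subseteq\tilde c^{-1}\left(\left\{i\right\}\right)$ with $X\cong\alpha_i$; since $\alpha_i\geq 2$ forces $X$ to be non-empty, we must have $i\in\left\{r,s\right\}$, and then $X$ is a monochromatic homeomorphic copy of $\omega_1$ for the original colouring $c$. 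This would yield $\beta\rightarrow\left(top\,\omega_1\right)^1_2$, contradicting Prikry--Solovay. Hence $P^{top}\left(\alpha_i\right)_{i\in\kappa}=\infty$, establishing $P_\kappa=\infty$.

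There is essentially no obstacle here: the theorem of Prikry and Solovay is quoted as a black box and does all the real work, while the reduction from the two-colour statement to the general sequence statement is a one-line monotonicity observation. The only thing to be slightly careful about is that the indices $r$ and $s$ are guaranteed by the hypothesis of the definition of $P_\kappa$, and that $\alpha_i\geq 2$ is used to rule out the trivially empty colour classes in the reduction.
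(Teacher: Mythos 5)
Your proposal is correct and matches the paper's argument: the paper likewise passes to a model of $V=L$ (via G\"odel's constructible universe), applies the Prikry--Solovay theorem to get $P^{top}\left(\omega_1,\omega_1\right)=\infty$, and cites monotonicity of pigeonhole numbers for the reduction you spell out explicitly with the colouring $\tilde c$. Your explicit two-colour-to-$\kappa$-colour reduction is just an unpacking of that monotonicity remark, so there is nothing missing.
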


\begin{proof}
This follows immediately from the Prikry--Solovay theorem and monotonicity of pigeonhole numbers.
\end{proof}

In the other direction, we use a result of Shelah, who introduced the following notation \cite[Chapter X, \textsection 7]{shelah}.

\begin{definition}[Shelah]
Let $\lambda$ be an uncountable regular cardinal. Write $\operatorname{Fr}^+\left(\lambda\right)$ to mean that every subset of $E^\lambda_\omega$ that is stationary in $\lambda$ has a subset order-homeomorphic to $\omega_1$.
\end{definition}

Note the similarity between this notion and Theorem \ref{friedmangeneralisation}. In fact the letters ``$\operatorname{Fr}$'' here refer to Friedman, who first asked whether or not there exists an ordinal $\beta$ with $\beta\rightarrow\left(top\,\omega_1\right)^1_2$ \cite{friedman}.

Here is the result of Shelah \cite[Chapter XI, Theorem 7.6]{shelah}.

\begin{theorem}[Shelah]\label{shelahsupercompact}
If $\mathsf{ZFC}+$``there exists a supercompact cardinal'' is consistent, then so is 
\[\mathsf{ZFC}+\text{``$\operatorname{Fr}^+\left(\lambda\right)$ holds for every regular cardinal $\lambda\geq\aleph_2$''}.\]
\end{theorem}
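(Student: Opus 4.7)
Since the final statement is a theorem of Shelah, the plan is to cite \cite[Chapter XI, Theorem 7.6]{shelah} rather than to reprove it, as its proof relies on forcing and large-cardinal techniques that lie entirely outside the topological and combinatorial methods developed in this article. If one wished to sketch Shelah's argument at a high level, it would proceed as follows.

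Start in a ground model $V$ containing a supercompact cardinal $\kappa$. Apply Laver preparation to make $\kappa$ indestructibly supercompact with respect to $<\kappa$-directed closed forcing. Then collapse $\kappa$ to $\aleph_2$ using $\operatorname{Coll}(\omega_1, <\kappa)$, an $\omega_1$-closed, $\kappa$-cc forcing of size $\kappa$. In the generic extension $V[G]$, one has $\kappa = \aleph_2$, and any supercompactness embedding $j \colon V \to M$ lifts to $j \colon V[G] \to M[j(G)]$ via a master-condition argument, since $M$ remains sufficiently closed in $V[G]$.

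The heart of the proof is deducing $\operatorname{Fr}^+(\aleph_2)$ from this lifted supercompactness. Given a stationary $S \subseteq E^{\aleph_2}_\omega$ in $V[G]$, elementarity of $j$ combined with the closure of $M$ produces a point $\delta$ of cofinality $\omega_1$ below which $S$ reflects, meaning $S \cap \delta$ is stationary in $\delta$; applying Friedman's theorem (our Theorem \ref{friedmangeneralisation}) inside $\delta$ then yields a closed subset of $S$ of order type $\omega_1$, that is, an order-homeomorphic copy of $\omega_1$. To obtain $\operatorname{Fr}^+(\lambda)$ simultaneously at every regular $\lambda \geq \aleph_2$, one iterates the construction using a proper class of supercompacts, or alternatively exploits Shelah's uniform analysis of reflection from a single sufficiently strong large cardinal.

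The main obstacle is the preservation step: one must verify that the collapse forcing preserves enough supercompactness (via indestructibility) to witness stationary reflection not only at $\aleph_2$ but at every higher regular cardinal, and further that the reflection produced is strong enough to yield \emph{closed} subsets of order type $\omega_1$, rather than merely subsets of order type $\omega_1$ that fail to be closed in their suprema. This combinatorial subtlety, which distinguishes $\operatorname{Fr}^+$ from weaker stationary reflection principles, is precisely what Shelah's argument handles, and for our purposes the statement may be treated as a black box and fed into the arguments of the following section.
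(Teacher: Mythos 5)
Your main decision---to cite \cite[Chapter XI, Theorem 7.6]{shelah} and treat the statement as a black box---is exactly what the paper does: no proof is given there either, so at the level that matters for the article the two approaches coincide. However, the optional sketch you append contains a step that would fail if taken literally. Stationary reflection of $S\subseteq E^{\lambda}_\omega$ at a point $\delta$ of cofinality $\omega_1$ gives only that $S\cap\delta$ is stationary in $\delta$, and Theorem \ref{friedmangeneralisation} applies only to countable ordinals $\alpha\in\omega_1$; neither ingredient produces a subset of $S$ order-homeomorphic to $\omega_1$. Such a subset is a continuous increasing $\omega_1$-chain lying entirely inside $S$, and if $C\subseteq\delta$ is a club of order type $\omega_1$ then $S\cap C$ is merely a stationary subset of a copy of $\omega_1$, which need not contain a club of it---so ``reflection plus Friedman'' yields closed copies of every countable ordinal but not of $\omega_1$ itself. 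Supplying closed copies of $\omega_1$ is precisely the extra strength that distinguishes $\operatorname{Fr}^+$ from ordinary stationary reflection and is what Shelah's forcing argument is engineered to achieve (as your final paragraph half-acknowledges). Since you rely on the citation rather than the sketch, this does not affect the role of Theorem \ref{shelahsupercompact} in the paper, but the sketch should not be presented as a correct outline of Shelah's proof.
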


In order to apply Shelah's result to case \ref{independencecase} we make the following observation.

\begin{lemma}\label{friedmanplus}
Let $\kappa\geq\aleph_1$ be a cardinal. If $\operatorname{Fr}^+\left(\kappa^+\right)$ holds, then
\[\kappa^+\rightarrow\left(top\,\omega_1\right)^1_\kappa.\]
\end{lemma}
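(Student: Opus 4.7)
The plan is to combine the three tools already on the table: Lemma \ref{stationaryunions} (Fodor-style partitioning of stationary sets), the fact that $E^{\kappa^+}_\omega$ is stationary in $\kappa^+$, and the hypothesis $\operatorname{Fr}^+(\kappa^+)$ itself.

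Given an arbitrary colouring $c:\kappa^+\rightarrow\kappa$, I would first restrict $c$ to $E^{\kappa^+}_\omega$, which is a stationary subset of $\kappa^+$. Since $\kappa<\kappa^+$ and $\kappa^+$ is an uncountable regular cardinal, Lemma \ref{stationaryunions} applies directly to the colouring $c\restriction E^{\kappa^+}_\omega$, yielding some $i\in\kappa$ for which the set
\[S_i=c^{-1}\left(\left\{i\right\}\right)\cap E^{\kappa^+}_\omega\]
is stationary in $\kappa^+$.

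Now invoke $\operatorname{Fr}^+(\kappa^+)$: since $S_i\subseteq E^{\kappa^+}_\omega$ is stationary in $\kappa^+$, it contains a subset $X$ which is order-homeomorphic to $\omega_1$, and in particular homeomorphic to $\omega_1$. This $X$ lies entirely in the $i$-th colour class, which is exactly what the partition relation $\kappa^+\rightarrow(top\,\omega_1)^1_\kappa$ demands.

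There is essentially no obstacle: the result is a direct two-step unpacking of the definitions, with Lemma \ref{stationaryunions} providing the ``concentration of colour on a stationary set'' step and $\operatorname{Fr}^+(\kappa^+)$ providing the ``extract a topological copy of $\omega_1$'' step. The only thing to verify carefully is that the hypotheses of Lemma \ref{stationaryunions} apply, i.e.\ that the number of colours $\kappa$ is strictly less than $\kappa^+$ (trivially true) and that $E^{\kappa^+}_\omega$ is stationary in $\kappa^+$ (noted in the paper just after its definition).
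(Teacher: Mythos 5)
Your proposal is correct and is exactly the paper's argument: the paper's proof is the one-liner ``Simply apply Lemma \ref{stationaryunions}'', i.e.\ concentrate the colouring on a stationary subset of $E^{\kappa^+}_\omega$ and then invoke $\operatorname{Fr}^+\left(\kappa^+\right)$. Your write-up just spells out the same two steps in detail.
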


\begin{proof}
Simply apply Lemma \ref{stationaryunions}.
\end{proof}

\begin{corollary}\label{independencesupercompact}
If $\mathsf{ZFC}+$``there exists a supercompact cardinal'' is consistent, then so is
\[\mathsf{ZFC}+\text{``for all cardinals $\kappa\geq 2$, $P_\kappa=\operatorname{max}\left\{\omega_2,\kappa^+\right\}$''}.\]
\end{corollary}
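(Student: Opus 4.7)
The plan is to combine Shelah's theorem (\ref{shelahsupercompact}) with the lemmas already assembled in this section. Assume $\mathsf{ZFC}+$``there exists a supercompact cardinal'' is consistent, and apply Theorem \ref{shelahsupercompact} to pass to a model of $\mathsf{ZFC}+$``$\operatorname{Fr}^+(\lambda)$ holds for every regular cardinal $\lambda\geq\aleph_2$''. Working inside that model, I would verify $P_\kappa=\operatorname{max}\{\omega_2,\kappa^+\}$ for every cardinal $\kappa\geq 2$.

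The lower bound $P_\kappa\geq\operatorname{max}\{\omega_2,\kappa^+\}$ is already available as Corollary \ref{omega2geq} and holds in plain $\mathsf{ZFC}$, so the task reduces to the matching upper bound: for any sequence $(\alpha_i)_{i\in\kappa}$ with $2\leq\alpha_i\leq\omega_1$ for all $i\in\kappa$ and $\alpha_r=\alpha_s=\omega_1$ for some distinct $r,s\in\kappa$, one needs to show
\[\operatorname{max}\{\omega_2,\kappa^+\}\rightarrow(top\,\alpha_i)^1_{i\in\kappa}.\]
I would split on the size of $\kappa$.

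If $\kappa\geq\aleph_1$, then $\operatorname{max}\{\omega_2,\kappa^+\}=\kappa^+$, and since $\kappa^+\geq\aleph_2$ is a regular cardinal, $\operatorname{Fr}^+(\kappa^+)$ holds in our model. Lemma \ref{friedmanplus} then delivers $\kappa^+\rightarrow(top\,\omega_1)^1_\kappa$, and the pointwise monotonicity of $P^{top}$ recorded after the definition of the pigeonhole numbers turns this into $\kappa^+\rightarrow(top\,\alpha_i)^1_{i\in\kappa}$, as required. If instead $\kappa\leq\aleph_0$, then $\operatorname{max}\{\omega_2,\kappa^+\}=\omega_2$. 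I would now invoke Lemma \ref{friedmanplus} at $\aleph_1$ rather than at $\kappa$: since $\operatorname{Fr}^+(\omega_2)$ holds, we get $\omega_2\rightarrow(top\,\omega_1)^1_{\aleph_1}$. A colouring $c:\omega_2\to\kappa$ can be viewed as a colouring into $\aleph_1$ colours by extending $c$ arbitrarily, so this relation produces a homeomorphic copy of $\omega_1$ in some colour class; any such copy contains a homeomorphic copy of $\alpha_i\leq\omega_1$ as an initial segment, closing the case.

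The hard work is entirely absorbed by Shelah's theorem, which I treat as a black box; the remaining steps are routine consequences of Lemma \ref{friedmanplus} (itself a quick application of Lemma \ref{stationaryunions}) together with monotonicity in both the number of colours and the target ordinals. I do not anticipate a genuine combinatorial obstacle beyond the care needed to handle the two ranges of $\kappa$ uniformly and to remember that for finite or countable $\kappa$ one must apply $\operatorname{Fr}^+$ at $\omega_2$ rather than at $\kappa^+$.
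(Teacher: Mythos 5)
Your proposal is correct and follows essentially the same route as the paper: the lower bound is Corollary \ref{omega2geq}, and the upper bound comes from Theorem \ref{shelahsupercompact} together with Lemma \ref{friedmanplus} and monotonicity of the pigeonhole numbers. Your explicit case split for $\kappa\leq\aleph_0$ (invoking $\operatorname{Fr}^+\left(\omega_2\right)$ via $\kappa=\aleph_1$ and discarding the unused colours) is just the routine detail the paper leaves implicit in its one-line proof.
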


\begin{proof}
Observe that by Corollary \ref{omega2geq}, the following is a theorem of $\mathsf{ZFC}$: ``for all cardinals $\kappa\geq 2$, $P_\kappa\geq\operatorname{max}\left\{\omega_2,\kappa^+\right\}$''. To finish, simply combine Theorem \ref{shelahsupercompact} with Lemma \ref{friedmanplus}.
\end{proof}

To conclude this section, we address the question of whether a large cardinal assumption is required. To this end we give an equiconsistency result essentially due to Silver and Shelah.

Silver proved the following result by showing that if $\omega_2\rightarrow\left(top\,\omega_1\right)^1_2$ then $\square_{\omega_1}$ does not hold, a proof of which can be found in Weiss's article \cite[Theorem 2.10]{weiss}.

\begin{theorem}[Silver]\label{silver}
If $\omega_2\rightarrow\left(top\,\omega_1\right)^1_2$ then $\omega_2$ is Mahlo in $L$.
\end{theorem}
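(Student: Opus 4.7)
The plan is to derive Silver's theorem in two stages: first show that the partition hypothesis refutes Jensen's combinatorial principle $\square_{\omega_1}$, and then quote Jensen's fine-structural result that the failure of $\square_{\omega_1}$ forces $\omega_2$ to be Mahlo in $L$. The topological-combinatorial content lies entirely in the first stage.

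To set up the first stage, I would first reduce the problem using material from earlier in the paper. By Theorem \ref{orderreinforcing}, any $X\subseteq\omega_2$ with $X\cong\omega_1$ contains a subset $Y$ that is order-homeomorphic to $\omega_1$; by Proposition \ref{ordertypehomeomorphic}, such a $Y$ is a closed subset of $\eta:=\sup Y$ of order type $\omega_1$, hence club in $\eta$, forcing $\operatorname{cf}(\eta)=\omega_1$. Therefore it suffices to produce, from a $\square_{\omega_1}$-sequence $\vec C=\langle C_\alpha:\alpha<\omega_2,\,\alpha\text{ a limit}\rangle$, a colouring $c:\omega_2\rightarrow 2$ such that no club subset of any ordinal $\eta<\omega_2$ of cofinality $\omega_1$ is $c$-monochromatic.

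For the colouring, fix (via Solovay's theorem) a stationary, co-stationary $S\subseteq\omega_1$ and set $c(\alpha)=1$ iff $\alpha$ is a limit ordinal with $\operatorname{otp}(C_\alpha)\in S$, and $c(\alpha)=0$ otherwise. Given $Y$ club in some $\eta<\omega_2$ with $\operatorname{cf}(\eta)=\omega_1$, the intersection $Y\cap C_\eta$ is club in $\eta$; let $Y^{\ast}$ be the set of its limit points, so $Y^{\ast}$ is club in $\eta$ of order type $\omega_1$. For $\alpha\in Y^{\ast}$, coherence of $\vec C$ forces $C_\alpha=C_\eta\cap\alpha$, hence $\operatorname{otp}(C_\alpha)=\operatorname{otp}(C_\eta\cap\alpha)$. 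The assignment $\alpha\mapsto\operatorname{otp}(C_\eta\cap\alpha)$ is continuous and strictly increasing on $Y^{\ast}$ and enumerates a club subset of $\omega_1$, which by stationarity and co-stationarity of $S$ meets both $S$ and $\omega_1\setminus S$. Thus $Y^{\ast}\subseteq Y$ contains points of both colours, and $Y$ is not monochromatic. This shows $\square_{\omega_1}$ implies $\omega_2\not\rightarrow(top\,\omega_1)^1_2$.

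To finish, invoke Jensen's theorem that the failure of $\square_\kappa$ forces $\kappa^+$ to be Mahlo in $L$ (specialised to $\kappa=\omega_1$); combined with the contrapositive of the previous step, the hypothesis $\omega_2\rightarrow(top\,\omega_1)^1_2$ yields that $\omega_2$ is Mahlo in $L$. The main obstacle is not the colouring construction, which is short once the coherence of $\vec C$ is exploited, but rather the appeal to Jensen's fine-structural result, which is non-trivial and must simply be quoted; the verification that a $\square_{\omega_1}$-sequence constructed in $L$ continues to satisfy the coherence requirements in $V$ (so that the implication is genuinely $V$-to-$L$) is part of that quoted package.
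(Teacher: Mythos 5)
Your proposal is correct and follows exactly the route the paper attributes to Silver: show that $\square_{\omega_1}$ yields a two-colouring of $\omega_2$ admitting no monochromatic copy of $\omega_1$ (via the reduction through Theorem \ref{orderreinforcing} and Proposition \ref{ordertypehomeomorphic} to clubs in ordinals of cofinality $\omega_1$, and the stationary--co-stationary $S$ applied to $\operatorname{otp}\left(C_\alpha\right)$), then quote Jensen's fine-structural theorem that $\neg\square_{\omega_1}$ makes $\omega_2$ Mahlo in $L$. The paper itself merely cites this argument (referring to Weiss's article for the details), and your colouring construction is the standard one given there, so there is nothing to add.
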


Here is the result of Shelah \cite[Chapter XI, Theorem 7.1]{shelah}.

\begin{theorem}[Shelah]\label{shelahmahlo}
If $\mathsf{ZFC}+$``there exists a Mahlo cardinal'' is consistent, then so is $\mathsf{ZFC}+\text{``$\operatorname{Fr}^+\left(\aleph_2\right)$''}$.
\end{theorem}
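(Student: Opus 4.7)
The plan is to begin with a ground model $V \models \mathsf{ZFC} + $ ``$\kappa$ is Mahlo'' and construct a forcing extension in which $\kappa$ becomes $\aleph_2$ and every stationary subset of $E^{\aleph_2}_\omega$ contains a closed copy of $\omega_1$. Unpacking the target: by Proposition \ref{ordertypehomeomorphic}, a set order-homeomorphic to $\omega_1$ is exactly a closed subset of its supremum of order type $\omega_1$, so $\operatorname{Fr}^+(\aleph_2)$ asserts that for every stationary $S \subseteq E^{\aleph_2}_\omega$ there is some $\delta < \aleph_2$ of cofinality $\omega_1$ with a club $C \subseteq \delta$ satisfying $C \subseteq S$. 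The natural way to arrange this is an iterated forcing of length $\kappa$ that simultaneously collapses $\kappa$ to $\aleph_2$ and, via bookkeeping, shoots such clubs through every potential stationary set.

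Concretely, I would do a countable-support iteration $\langle \mathbb{P}_\alpha, \dot{\mathbb{Q}}_\beta : \alpha \leq \kappa,\ \beta < \kappa\rangle$, interleaving two kinds of steps. The collapse steps are Lévy collapses $\operatorname{Coll}(\omega_1, \mu)$ for inaccessible $\mu < \kappa$, ensuring that at the end $\kappa = \aleph_2$. The shooting steps, given a name $\dot S$ that has been forced to denote a stationary subset of $E^{\aleph_2}_\omega$, use the forcing $\mathbb{Q}_{\dot S}$ of countable closed bounded subsets of $\dot S$ ordered by end-extension; a generic here is a closed cofinal sequence in some $\delta$ of cofinality $\omega_1$ sitting inside $\dot S$. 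A diamond-like bookkeeping on $\kappa$ enumerates all nice names for stationary subsets of $E^{\aleph_2}_\omega$ and schedules each for shooting, so that in $V[G]$ every such set has been hit.

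The main obstacle, and the place where Mahloness is indispensable, is preservation. Three things must hold in tandem: (i) $\omega_1$ must be preserved, which follows from each iterand being $\omega_1$-closed (Lévy collapses visibly, and $\mathbb{Q}_{\dot S}$ because countable descending sequences of countable closed bounded subsets of a stationary set have a closed union that is still bounded), together with the countable-support iteration preserving this property; (ii) the iteration must satisfy the $\kappa$-chain condition, which is where Mahloness enters—the set of $\alpha < \kappa$ with $\alpha$ inaccessible and $\mathbb{P}_\alpha$ completely contained in $V_\alpha$ is stationary, and a standard $\Delta$-system argument on this stationary set of ``reflecting'' points absorbs any putative antichain of size $\kappa$; (iii) at the moment $\mathbb{Q}_{\dot S}$ is forced with, the set denoted by $\dot S$ must still be stationary, so one needs a preservation-of-stationarity theorem for the iteration, again typically leveraging the reflecting points given by Mahloness.

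Finally, once (i)--(iii) are established, in $V[G]$ one has $\kappa = \aleph_2$ and, for every $V[G]$-stationary $S \subseteq E^{\aleph_2}_\omega$, the bookkeeping ensures that at some intermediate stage $\alpha < \kappa$ a name for $S$ was processed, so that a closed $\omega_1$-sequence through $S$ was added; since $\omega_1$ is preserved, this witnesses $\operatorname{Fr}^+(\aleph_2)$. I would expect the preservation-of-stationarity step to be the thorniest technical point, as it is in Shelah's original treatment, since shooting a club through one stationary set can in principle kill the stationarity of others, and reconciling this with the iteration requires the revised-countable-support machinery or an equivalent device calibrated to the Mahlo cardinal.
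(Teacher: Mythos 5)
First, note that the paper does not prove this statement at all: it is quoted directly from Shelah \cite[Chapter XI, Theorem 7.1]{shelah}, so what is expected here is a citation of a deep theorem, whereas your proposal attempts to reprove it. The general shape you describe (an iteration of length a Mahlo cardinal $\kappa$, interleaving collapses with club-shooting forcings scheduled by bookkeeping) is indeed the shape of Shelah's argument, but the preservation claims on which your sketch rests are wrong at the decisive point. The club-shooting iterand is \emph{not} $\omega_1$-closed: if $p_0\geq p_1\geq\dots$ is a strictly descending $\omega$-chain of countable closed bounded subsets of $S$ under end-extension, then $\delta=\operatorname{sup}\bigcup_n p_n$ has cofinality $\omega$, and any common extension must contain $\delta$ (it is a limit point of every extension), hence requires $\delta\in S$ --- which generally fails, since $S$ is merely stationary in $E^{\omega_2}_\omega$. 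If instead you weaken the notion of condition so that the union of such a chain is again a condition, the poset becomes $\sigma$-closed but the generic object need not be closed in its supremum, so by Proposition \ref{ordertypehomeomorphic} it does not yield a subset of $S$ order-homeomorphic to $\omega_1$. This is not a repairable detail: if the iterands really were $\omega_1$-closed, Mahloness would play no role in preserving $\omega_1$ and the construction would be routine, whereas Theorem \ref{silver} together with Lemma \ref{friedmanplus} shows that $\operatorname{Fr}^+(\aleph_2)$ genuinely requires $\omega_2$ to be Mahlo in $L$. The whole difficulty of Shelah's proof is that the sealing forcings are only semiproper --- which is why the revised countable support machinery you mention only as a closing caveat is the heart of the matter, not an optional refinement of a countable-support iteration of $\omega_1$-closed posets.

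A second inaccuracy compounds this. For the poset as you define it, the set of conditions with maximum above any prescribed $\alpha$ is dense (as $S$ is unbounded), so the generic club is cofinal in the current $\omega_2$ and has order type $\omega_1$; each such step therefore changes the cofinality of the current $\omega_2$ to $\omega_1$ and collapses it, rather than producing ``a closed cofinal sequence in some $\delta<\aleph_2$'' as you assert. Consequently the scheduling cannot simply process names for stationary subsets of $E^{\aleph_2}_\omega$ of the final model as if they lived at the stage where they are treated: handling a name for a stationary $S\subseteq E^{\kappa}_\omega$ at a stage $\alpha<\kappa$ requires that its trace reflect appropriately at inaccessible stages, and this reflection --- not a $\Delta$-system argument for the $\kappa$-chain condition alone --- is where Mahloness does its real work. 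Your items (ii) and (iii) are asserted rather than argued, and with (i) false as stated, the sketch does not constitute a proof; for the purposes of this paper the correct move is simply to cite Shelah's theorem.
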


\begin{corollary}\label{independencemahlo}
$\mathsf{ZFC}+$``there exists a Mahlo cardinal'' is consistent if and only if
\[\mathsf{ZFC}+\text{``$\omega_2\rightarrow\left(top\,\omega_1\right)^1_2$''}\]
is consistent.
\end{corollary}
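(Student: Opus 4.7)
The plan is to establish the two directions separately, each following directly from one of the Shelah/Silver results already cited.

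For the "if" direction, I assume the consistency of $\mathsf{ZFC}+$"there exists a Mahlo cardinal". Shelah's Theorem \ref{shelahmahlo} then gives the consistency of $\mathsf{ZFC}+\operatorname{Fr}^+(\aleph_2)$. In any such model, Lemma \ref{friedmanplus} applied with $\kappa=\aleph_1$ yields $\omega_2\rightarrow(top\,\omega_1)^1_{\aleph_1}$, which in particular implies $\omega_2\rightarrow(top\,\omega_1)^1_2$ by monotonicity in the number of colours (a $2$-colouring is a special case of an $\aleph_1$-colouring). Hence $\mathsf{ZFC}+$"$\omega_2\rightarrow(top\,\omega_1)^1_2$" is consistent.

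For the "only if" direction, I suppose $\mathsf{ZFC}+$"$\omega_2\rightarrow(top\,\omega_1)^1_2$" is consistent, so there is a model $M\models\mathsf{ZFC}+$"$\omega_2\rightarrow(top\,\omega_1)^1_2$". Silver's Theorem \ref{silver}, applied inside $M$, gives that $\omega_2^M$ is Mahlo in $L^M$. Since $L^M$ is itself a model of $\mathsf{ZFC}$ and it contains a Mahlo cardinal (namely $\omega_2^M$), we obtain $L^M\models\mathsf{ZFC}+$"there exists a Mahlo cardinal", whence $\mathsf{ZFC}+$"there exists a Mahlo cardinal" is consistent.

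There is essentially no obstacle here beyond correctly invoking the two cited theorems; the argument is purely a bookkeeping exercise combining Theorems \ref{silver} and \ref{shelahmahlo} with Lemma \ref{friedmanplus}. The only minor subtlety worth flagging is the reduction from $\aleph_1$ colours to $2$ colours in the "if" direction, which is immediate from the definition of the partition relation.
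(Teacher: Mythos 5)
Your proof is correct and follows the paper's own argument exactly: Silver's Theorem \ref{silver} (applied inside a model of the partition relation, passing to its $L$) gives one direction, and Shelah's Theorem \ref{shelahmahlo} combined with Lemma \ref{friedmanplus} at $\kappa=\aleph_1$ (then restricting from $\aleph_1$ colours to $2$) gives the other. The only quibble is that your labels ``if'' and ``only if'' are swapped relative to the standard reading of the statement, but both implications are proved correctly.
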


\begin{proof}
Theorem \ref{silver} gives the ``if'' statement. The ``only if'' statement follows by combining Theorem \ref{shelahmahlo} with Lemma \ref{friedmanplus}.
\end{proof}

\subsection{Sequences including an ordinal larger than \texorpdfstring{$\omega_1$}{omega\_1}}

It remains to cover cases \ref{infinityprovablecase} and \ref{lopsidedcase} of the principle, in which one of the ordinals exceeds $\omega_1$. Although this appears to be a very large class of cases, the situation is dramatically simplified by the following elementary argument covering case \ref{infinityprovablecase}. It is our only result in which the topological pigeonhole number ($\mathsf{ZFC}$-provably) does not exist.

\begin{proposition}\label{infinityprovable}
$P^{top}\left(\omega_1+1,\omega+1\right)=\infty$.
\end{proposition}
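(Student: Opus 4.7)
The plan is to exhibit, for every ordinal $\beta$, a single $2$-colouring $c\colon\beta\to 2$ witnessing $\beta\nrightarrow\left(top\,\omega_1+1,\omega+1\right)^1$; since $\beta$ is arbitrary, this yields $P^{top}\left(\omega_1+1,\omega+1\right)=\infty$. The natural candidate is the cofinality-based colouring
\[c(x)=\begin{cases}1,&\text{if $x$ is a limit ordinal with $\operatorname{cf}(x)=\omega_1$,}\\ 0,&\text{otherwise.}\end{cases}\]
Morally, points of cofinality $\omega_1$ are too ``thick'' to sit at the top of a convergent $\omega$-sequence of colour-$1$ points, while points of any other cofinality are too ``thin'' to sit at the top of a convergent $\omega_1$-sequence of colour-$0$ points.

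To rule out a copy of $\omega_1+1$ in colour $0$, suppose $X\subseteq c^{-1}\left(\{0\}\right)$ with $X\cong\omega_1+1$. Since $\omega_1=\omega^{\omega_1}$, we have $\omega_1+1=\omega^{\omega_1}\cdot 1+1$, so Theorem \ref{orderreinforcing} part \ref{orderreinforcingsuccessor} yields $Y\subseteq X$ order-homeomorphic to $\omega_1+1$. Then $y:=\max Y$ is the supremum in $\beta$ of the initial segment of $Y$ of order type $\omega_1$, so $\operatorname{cf}(y)=\omega_1$ and hence $c(y)=1$. But $y\in X\subseteq c^{-1}\left(\{0\}\right)$, a contradiction.

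Symmetrically, to rule out a copy of $\omega+1$ in colour $1$, suppose $Y\subseteq c^{-1}\left(\{1\}\right)$ with $Y\cong\omega+1$. As $\omega+1=\omega^{1}\cdot 1+1$, Theorem \ref{orderreinforcing} part \ref{orderreinforcingsuccessor} again yields $Z\subseteq Y$ order-homeomorphic to $\omega+1$, so $\operatorname{cf}\left(\max Z\right)=\omega$ and hence $c\left(\max Z\right)=0$, contradicting $\max Z\in c^{-1}\left(\{1\}\right)$.

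There is essentially no obstacle here: once a topological copy of $\omega_1+1$ (respectively $\omega+1$) is promoted to an order-homeomorphic one, its maximum is literally an ordinal of cofinality $\omega_1$ (respectively $\omega$) in $\beta$, and this cofinality is exactly what the colouring records. The only small fact to keep in mind is the identity $\omega_1=\omega^{\omega_1}$, which lets us apply Theorem \ref{orderreinforcing} in its stated form; everything else is bookkeeping.
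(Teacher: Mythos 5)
Your proposal is correct and is essentially the paper's own argument: the same cofinality-based colouring (the paper colours points of cofinality $\geq\omega_1$ rather than exactly $\omega_1$, which makes no difference), with each colour ruled out by the observation that a copy of $\omega_1+1$ must contain a point of cofinality $\omega_1$ while a copy of $\omega+1$ must contain a point of cofinality $\omega$. Your detour through Theorem \ref{orderreinforcing} (via $\omega_1=\omega^{\omega_1}$) is valid but heavier than needed; the paper simply records these cofinality facts directly.
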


\begin{proof}
Let $\beta$ be any ordinal. We show that $\beta\nrightarrow\left(top\,\omega_1+1,\omega+1\right)^1$. First observe that a homeomorphic copy of $\omega_1+1$ must contain a point of cofinality $\omega_1$, while a homeomorphic copy of $\omega+1$ must contain a point of cofinality $\omega$. The result is then witnessed by the colouring $c:\beta\rightarrow 2$ given by
\[
c\left(x\right)=
\begin{cases}
1,&\text{if $\operatorname{cf}\left(x\right)\geq\omega_1$}\\
0,&\text{otherwise}.
\end{cases}
\qedhere\]
\end{proof}

We conclude this section by simplifying case \ref{lopsidedcase} using another elementary argument. We leave the rest of the proof for this case for the next section.

\begin{lemma}\label{lopsidedsimplification}
Let $\kappa$ be a cardinal and let $\alpha_i$ be an ordinal for each $i\in\kappa$. Suppose $\alpha_r\geq\omega_1+1$ for some $r\in\kappa$ and $2\leq\alpha_i\leq\omega$ for all $i\in\kappa\setminus\left\{r\right\}$, and let
\[
\lambda=
\begin{cases}
\kappa^+,&\text{if $\kappa\geq\aleph_0$}\\
\aleph_0,&\text{if $\kappa<\aleph_0$ and $\alpha_s=\omega$ for some $s\in\kappa\setminus\left\{r\right\}$}\\
\sum_{i\in\kappa\setminus\left\{r\right\}}\left(\alpha_i-1\right)+1,&\text{if $\kappa<\aleph_0$ and $\alpha_i<\omega$ for all $i\in\kappa\setminus\left\{r\right\}$}.
\end{cases}
\]
Let $\beta$ be any ordinal. Then
\[\beta\rightarrow\left(top\,\alpha_i\right)^1_{i\in\kappa}\]
if and only if for every subset $A\subseteq\beta$ with $\left|A\right|<\lambda$ there exists $X\subseteq\beta\setminus A$ with $X\cong\alpha_r$.
\end{lemma}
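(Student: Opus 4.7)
The plan is to prove the biconditional by reducing in both directions to colourings of the following form: colour $r$ is assigned to the bulk of $\beta$, and the other colours are used only on a ``small'' set. The key elementary fact is that every infinite set of ordinals contains a subspace homeomorphic to $\omega$ (pick a strictly increasing $\omega$-sequence $s_0<s_1<\cdots$; each $s_n$ is isolated in $\{s_m\}$ since $(s_{n-1},s_n+1)$ meets $\{s_m\}$ only at $s_n$). Combined with the hypothesis $2\leq\alpha_i\leq\omega$ for all $i\neq r$, this means that a set $T$ of ordinals which fails to contain a homeomorphic copy of $\alpha_i$ must be finite, and of cardinality strictly smaller than $\alpha_i$ (interpreting $\omega$ as $\aleph_0$ in this comparison).

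For the ``only if'' direction, suppose $\beta\rightarrow\left(top\,\alpha_i\right)^1_{i\in\kappa}$ and let $A\subseteq\beta$ with $|A|<\lambda$. I would colour every element of $\beta\setminus A$ with colour $r$ and colour the elements of $A$ using colours in $\kappa\setminus\{r\}$ so that no colour class contains a copy of the corresponding $\alpha_i$. In the three cases defining $\lambda$ this is achieved respectively by: (i) an injection $A\hookrightarrow\kappa\setminus\{r\}$, valid since $|A|\leq\kappa$; (ii) the constant colouring with value $s$, where $\alpha_s=\omega$, which works because $A$ is finite and so cannot contain a copy of $\omega$; (iii) a direct application of the finite pigeonhole principle distributing the $|A|\leq\sum_{i\neq r}(\alpha_i-1)$ elements of $A$ into classes of sizes $\leq\alpha_i-1$. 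Applying the partition relation to this colouring produces a monochromatic copy of some $\alpha_i$; by construction it must be colour $r$, giving the required $X\subseteq\beta\setminus A$.

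For the ``if'' direction, let $c:\beta\to\kappa$ be arbitrary and assume for contradiction that no colour class $i\neq r$ contains a copy of $\alpha_i$. By the topological fact recalled above, each $c^{-1}(\{i\})$ with $i\neq r$ is then finite of size strictly less than $\alpha_i$. Setting $A=\bigcup_{i\neq r}c^{-1}(\{i\})$, a case split on $\lambda$ mirroring the one above establishes $|A|<\lambda$: when $\kappa\geq\aleph_0$ we get $|A|\leq\kappa\cdot\aleph_0=\kappa<\kappa^+$; when $\kappa$ is finite and some $\alpha_s=\omega$, $A$ is a finite union of finite sets; in the remaining case $|A|\leq\sum_{i\neq r}(\alpha_i-1)<\sum_{i\neq r}(\alpha_i-1)+1$. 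The hypothesis on $A$ then yields $X\subseteq\beta\setminus A=c^{-1}(\{r\})$ with $X\cong\alpha_r$, completing the proof. The only ``obstacle'', such as it is, is the bookkeeping across the three cases for $\lambda$; there is no deep topological or combinatorial content beyond the elementary fact above, and the two directions mirror each other almost perfectly via the same arithmetic on $|A|$.
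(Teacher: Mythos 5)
Your proposal is correct and follows essentially the same route as the paper: the forward direction uses exactly the same three colourings (injection, constant colour $s$, finite pigeonhole distribution into classes of size at most $\alpha_i-1$), and the converse is the paper's argument with the case split phrased contrapositively (no class $i\neq r$ contains a copy of $\alpha_i$, hence each such class has fewer than $\left|\alpha_i\right|$ elements, hence $\left|A\right|<\lambda$), resting on the same elementary fact that a set of ordinals of cardinality at least $\left|\alpha_i\right|$ with $\alpha_i\leq\omega$ contains a homeomorphic copy of $\alpha_i$. The only cosmetic quibble is that your ``assume for contradiction'' is really just a case split, since you derive the desired monochromatic copy rather than a contradiction.
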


\begin{proof}
First suppose that $\beta\rightarrow\left(top\,\alpha_i\right)^1_{i\in\kappa}$ and let $A\subseteq\beta$ with $\left|A\right|<\lambda$. If $\kappa\geq\aleph_0$, then take $f:A\rightarrow\kappa\setminus\left\{r\right\}$ to be any injection; if $\kappa<\aleph_0$ and $\alpha_s=\omega$ for some $s\in\kappa\setminus\left\{r\right\}$, then take $f:A\rightarrow\left\{s\right\}$ to be the constant function; and if $\kappa<\aleph_0$ and $\alpha_i<\omega$ for all $i\in\kappa\setminus\left\{r\right\}$, then take $f:A\rightarrow\kappa\setminus\left\{r\right\}$ to be any function with $\left|f^{-1}\left(\left\{i\right\}\right)\right|\leq\alpha_i-1$ for all $i\in\kappa\setminus\left\{r\right\}$. Now define a colouring $c:\beta\rightarrow\kappa$ by
\[
c\left(x\right)=
\begin{cases}
r,&\text{if $x\notin A$}\\
f\left(x\right),&\text{if $x\in A$}.
\end{cases}
\]
Then by construction $\left|c^{-1}\left(\left\{i\right\}\right)\right|<\alpha_i$ for all $i\in\kappa\setminus\left\{r\right\}$, so since $\beta\rightarrow\left(top\,\alpha_i\right)^1_{i\in\kappa}$ there exists $X\subseteq c^{-1}\left(\left\{r\right\}\right)=\beta\setminus A$ with $X\cong\alpha_r$.

Conversely, suppose that for every subset $A\subseteq\beta$ with $\left|A\right|<\lambda$ there exists $X\subseteq\beta\setminus A$ with $X\cong\alpha_r$. Let $c:\beta\rightarrow\kappa$ be a colouring, and let $A=c^{-1}\left(\kappa\setminus\left\{r\right\}\right)$. If $\left|A\right|<\lambda$ then by assumption there exists $X\subseteq\beta\setminus A=c^{-1}\left(\left\{r\right\}\right)$ with $X\cong\alpha_r$ and we are done, so assume $\left|A\right|\geq\lambda$. If $\kappa\geq\aleph_0$, then $\left|c^{-1}\left(\left\{j\right\}\right)\right|\geq\kappa^+$ for some $j\in\kappa\setminus\left\{r\right\}$; if $\kappa<\aleph_0$ and $\alpha_s=\omega$ for some $s\in\kappa\setminus\left\{r\right\}$, then $\left|c^{-1}\left(\left\{j\right\}\right)\right|\geq\aleph_0$ for some $j\in\kappa\setminus\left\{r\right\}$; and if $\kappa<\aleph_0$ and $\alpha_i<\omega$ for all $i\in\kappa\setminus\left\{r\right\}$, then by the finite pigeonhole principle $\left|c^{-1}\left(\left\{j\right\}\right)\right|\geq\alpha_j$ for some $j\in\kappa\setminus\left\{r\right\}$. In every case $\left|c^{-1}\left(\left\{j\right\}\right)\right|\geq\left|\alpha_j\right|$ and we are done.
\end{proof}

\subsection{Proof of the principle}\label{principleproof}

Having provided the key ingredients, we now complete the proof of the principle.

\begin{proof}[Proof of Theorem \ref{principle}]
We split into the same cases as in the statement of the theorem.
\begin{enumerate}
\item
This follows from Proposition \ref{infinityprovable}.
\item
Let $\lambda$ be as in Lemma \ref{lopsidedsimplification}, and note first of all that if $\alpha_r$ is a power of $\omega$ then $P^{top}\left(\alpha_i\right)_{i\in\kappa}\geq\alpha_r$ by part \ref{pseudolimitcase} of Proposition \ref{pseudonegative}.
\begin{enumerate}
\item
In this case $\lambda=\kappa^+$.
\begin{enumerate}
\item
Write $\omega^\beta\cdot m+1\leq\alpha_r\leq\omega^\beta\cdot\left(m+1\right)$ with $\beta$ an ordinal and $m\in\omega\setminus\left\{0\right\}$, and note that $\alpha_r\cdot\kappa^+=\omega^\beta\cdot\kappa^+$.

Suppose $\zeta<\omega^\beta\cdot\kappa^+$. To see that $\zeta\nrightarrow\left(top\,\alpha_i\right)^1_{i\in\kappa}$, let $A=\zeta\cap\left\{\omega^\beta\cdot\eta:\eta\in\kappa^+\setminus\left\{0\right\}\right\}$, so $\left|A\right|<\kappa^+$. Then $\left(\zeta\setminus A\right)^{\left(\beta\right)}=\emptyset$ whereas $\left|\alpha_r^{\left(\beta\right)}\right|=m$, so $\zeta\setminus A$ cannot contain a homeomorphic copy of $\alpha_r$.

To see that $\omega^\beta\cdot\kappa^+\rightarrow\left(top\,\alpha_i\right)^1_{i\in\kappa}$, let $A\subseteq\omega^\beta\cdot\kappa^+$ with $\left|A\right|<\kappa^+$. Then
\[A\subseteq\left(\bigcup_{\eta\in S}\left[\omega^\beta\cdot\eta+1,\omega^\beta\cdot\left(\eta+1\right)\right]\right)\cup\left\{\omega^{\beta+1}\cdot\eta:\eta\in\kappa^+\right\}\]
for some $S\subseteq\kappa^+$ with $\left|S\right|<\kappa^+$. Let $T\subseteq\kappa^+\setminus S$ with $\left|T\right|=m+1$. Then
\[\bigcup_{\eta\in T}\left[\omega^\beta\cdot\eta+1,\omega^\beta\cdot\left(\eta+1\right)\right]\]
is a homeomorphic copy of $\omega^\beta\cdot\left(m+1\right)+1$ disjoint from $A$, which suffices.
\item
Write $\alpha_r=\omega^\beta$. To see that $\alpha_r\cdot\kappa^+\rightarrow\left(top\,\alpha_i\right)^1_{i\in\kappa}$, simply observe that $\alpha_r\cdot\kappa^+=\left(\alpha_r+1\right)\cdot\kappa^+\rightarrow\left(top\,\alpha_r+1,\left(\alpha_i\right)_{i\in\kappa\setminus\left\{r\right\}}\right)^1$ by the previous case and use monotonicity. It remains to show either that $\alpha_r\rightarrow\left(top\,\alpha_i\right)^1_{i\in\kappa}$, or that if $\zeta<\alpha_r\cdot\kappa^+$ then $\zeta\nrightarrow\left(top\,\alpha_i\right)^1_{i\in\kappa}$.
\begin{enumerate}
\item
To see that $\alpha_r\rightarrow\left(top\,\alpha_i\right)^1_{i\in\kappa}$, simply observe that if $A\subseteq\alpha_r$ with $\left|A\right|<\kappa^+$, then $\operatorname{sup}A<\alpha_r$ since $\operatorname{cf}\left(\alpha_r\right)\geq\kappa^+$, and so $\alpha_r\setminus\left[0,\operatorname{sup}A\right]\cong\alpha_r$ since $\alpha_r$ is a power of $\omega$.
\item
Suppose $\zeta<\alpha_r\cdot\kappa^+$. To see that $\zeta\nrightarrow\left(top\,\alpha_i\right)^1_{i\in\kappa}$, let $B\subseteq\alpha_r$ be club with $\left|B\right|=\operatorname{cf}\left(\alpha_r\right)$, and let
\[A=\zeta\cap\left\{\alpha_r\cdot\eta+x:\eta\in\kappa^+,x\in B\cup\left\{0\right\}\right\}.\]
Then $\left|A\right|<\kappa^+$ since $\operatorname{cf}\left(\alpha_r\right)<\kappa^+$. Suppose for contradiction $X\subseteq\zeta\setminus A$ with $X\cong\alpha_r$. Since $\alpha_r$ is a power of $\omega$, using Theorem \ref{orderreinforcing} and passing to a subspace if necessary, we may assume that $X$ is order-homeomorphic to $\alpha_r$. Let $Y=X\cup\left\{\operatorname{sup}X\right\}\cong\alpha_r+1$. Then $Y^{\left(\beta\right)}=\left\{\operatorname{sup}X\right\}$, so by Lemma \ref{cantorbendixson} $\operatorname{sup}X=\alpha_r\cdot\eta$ for some $\eta\in\kappa^+\setminus\left\{0\right\}$. It follows using Proposition \ref{ordertypehomeomorphic} that $X$ is club in $\alpha_r\cdot\eta$. But then $\operatorname{cf}\left(\alpha_r\cdot\eta\right)=\operatorname{cf}\left(\alpha_r\right)>\aleph_0$ and $A$ is also club in $\alpha_r\cdot\eta$, so $X\cap A\neq\emptyset$, contrary to the definition of $X$.
\item
\begin{itemize}
\item
Suppose $\zeta<\alpha_r\cdot\kappa^+$. To see that $\zeta\nrightarrow\left(top\,\alpha_i\right)^1_{i\in\kappa}$, let
\[A=\zeta\cap\left\{\alpha_r\cdot\eta+\omega^\gamma\cdot x:\eta\in\kappa^+,x\in\omega^{\omega^\delta}\right\},\]
so $\left|A\right|<\kappa^+$ since $\delta<\kappa^+$. Then $\left(\zeta\setminus A\right)^{\left(\gamma\right)}=\emptyset$ whereas $\alpha_r^{\left(\gamma\right)}\cong\omega^{\omega^\delta}$, so $\zeta\setminus A$ cannot contain a homeomorphic copy of $\alpha_r$.
\item
First note that since $\delta>0$, either $\delta$ is a successor ordinal or $\operatorname{cf}\left(\delta\right)=\aleph_0$. To see that $\alpha_r\rightarrow\left(top\,\alpha_i\right)^1_{i\in\kappa}$, let $A\subseteq\alpha_r$ with $\left|A\right|<\kappa^+$. Using the fact that $\delta>\kappa^+$, we now choose a strictly increasing cofinal sequence $\left(\beta_n\right)_{n\in\omega}$ from $\beta$ with $\operatorname{cf}\left(\omega^{\beta_n}\right)=\operatorname{cf}\left(\beta_n\right)=\kappa^+$ for all $n\in\omega$. If $\delta=\varepsilon+1$, then take $\beta_n=\gamma+\omega^\varepsilon\cdot n+\kappa^+$ for all $n\in\omega$. If $\operatorname{cf}\left(\delta\right)=\aleph_0$, then let $\left(\delta_n\right)_{n\in\omega}$ be a strictly increasing cofinal sequence from $\delta$ with $\delta_n>\kappa^+$ for all $n\in\omega$, and take $\beta_n=\gamma+\omega^{\delta_n}+\kappa^+$ for all $n\in\omega$. Then for each $n\in\omega$, let $x_n=\operatorname{max}\left\{\omega^{\beta_n},\operatorname{sup}\left(A\cap\omega^{\beta_{n+1}}\right)\right\}$ and let $X_n=\left(x_n,\omega^{\beta_{n+1}}\right)$. Then $X_n\cong\omega^{\beta_{n+1}}$, so there exists $Y_n\subseteq X_n$ with $Y_n\cong\omega^{\beta_n}+1$. Then $\bigcup_{n\in\omega}Y_n$ is a homeomorphic copy of $\alpha_r$ disjoint from $A$.
\end{itemize}
\end{enumerate}
\end{enumerate}
\item
In this case $\lambda=\aleph_0$.
\begin{enumerate}
\item\label{lopsidedproofquotedcase}
To see that $\alpha_r\rightarrow\left(top\,\alpha_i\right)^1_{i\in\kappa}$, simply observe that if $A\subseteq\alpha_r$ with $\left|A\right|<\aleph_0$ then $\alpha_r\setminus\left[0,\operatorname{max}A\right]\cong\alpha_r$.
\item
Write $\omega^\beta\cdot m+1\leq\alpha_r\leq\omega^\beta\cdot\left(m+1\right)$ with $\beta$ an ordinal and $m\in\omega\setminus\left\{0\right\}$, and note that $\alpha_r\cdot\omega=\omega^{\beta+1}$.

Suppose $\zeta<\omega^{\beta+1}$. To see that $\zeta\nrightarrow\left(top\,\alpha_i\right)^1_{i\in\kappa}$, let $A=\zeta\cap\left\{\omega^\beta\cdot n:n\in\omega\setminus\left\{0\right\}\right\}$, which is finite. Then $\left(\zeta\setminus A\right)^{\left(\beta\right)}=\emptyset$ whereas $\left|\alpha_r^{\left(\beta\right)}\right|=m$, so $\zeta\setminus A$ cannot contain a homeomorphic copy of $\alpha_r$.

To see that $\omega^{\beta+1}\rightarrow\left(top\,\alpha_i\right)^1_{i\in\kappa}$, simply observe that $\omega^{\beta+1}\rightarrow\left(top\,\omega^{\beta+1},\left(\alpha_i\right)_{i\in\kappa\setminus\left\{r\right\}}\right)^1$ by the previous case and use monotonicity.
\end{enumerate}
\item
In this case $\lambda=\sum_{i\in\kappa\setminus\left\{r\right\}}\left(\alpha_i-1\right)+1$.
\begin{enumerate}
\item
The result is trivial if $\kappa=1$, and if $\alpha_r$ is a power of $\omega$ then the argument of case \ref{lopsidedproofquotedcase} suffices.
\item
Suppose $\zeta<\omega^\beta\cdot\left(\lambda-1+m\right)+1$. To see that $\zeta\nrightarrow\left(top\,\alpha_i\right)^1_{i\in\kappa}$, let $A=\zeta\cap\left\{\omega^\beta,\omega^\beta\cdot 2,\dots,\omega^\beta\cdot\left(\lambda-1\right)\right\}$, so $\left|A\right|<\lambda$. Then $\left|\left(\zeta\setminus A\right)^{\left(\beta\right)}\right|\leq m-1$ whereas $\left|\alpha_r^{\left(\beta\right)}\right|=m$, so $\zeta\setminus A$ cannot contain a homeomorphic copy of $\alpha_r$.

To see that $\omega^\beta\cdot\left(\lambda-1+m\right)+1\rightarrow\left(top\,\alpha_i\right)^1_{i\in\kappa}$, suppose $A\subseteq\omega^\beta\cdot\left(\lambda-1+m\right)+1$ with $\left|A\right|<\lambda$. Then by the argument of case \ref{lopsidedproofquotedcase} we may assume $A=\left\{\omega^\beta\cdot n:n\in S\right\}$ for some $S\subseteq\left\{1,2,\dots,\lambda-1+m\right\}$ with $S\leq\lambda$. Since $\kappa>1$ we have $S\neq\emptyset$, say $s\in S$. Then
\[\left(\bigcup_{n\in\left\{1,2,\dots,\lambda-1+m\right\}\setminus S}\left[\omega^\beta\cdot n+1,\omega^\beta\cdot\left(n+1\right)\right
]\right)\cup\left[\omega^\beta\cdot s+1,\omega^\beta\cdot\left(s+1\right)\right)\]
is a homeomorphic copy of $\omega^\beta\cdot\left(m+1\right)$ disjoint from $A$, which suffices.
\end{enumerate}
\end{enumerate}
\item
This is Corollaries \ref{omega2geq}, \ref{independenceequality}, \ref{independencesupercompact} and \ref{independencemahlo}.
\item
This is Theorem \ref{omega1equal}.
\item
This is Theorem \ref{infiniteofcountable}.
\item
\begin{enumerate}
\item
This is the finite pigeonhole principle.
\item
This follows from Theorems \ref{linkpowers} and \ref{powersroundup} using monotonicity of pigeonhole numbers.
\item
By Lemmas \ref{pseudorelevant} and \ref{pseudopositive}, we may assume that for each $i\in\kappa$, either $\alpha_i=\overline\omega\left[\beta_i,m_i\right]$ or $\alpha_i=\omega^{\beta_i}\cdot\left(m_i+1\right)$ and $\beta_i>0$. It follows that one of Theorems \ref{simplemultiples} and \ref{multiplemixtures} applies, and thus $P^{top}\left(\alpha_i\right)_{i\in\kappa}$ is equal to either $\omega^\beta\cdot m+1$ or $\omega^\beta\cdot\left(m+1\right)$, where $\beta=\beta_0\#\beta_1\#\cdots\#\beta_{\kappa-1}$ and $m=\sum_{i\in\kappa}\left(m_i-1\right)+1$. It remains to determine whether or not $\omega^\beta\cdot m+1\rightarrow\left(top\,\alpha_i\right)^1_{i\in\kappa}$.
\begin{enumerate}
\item
This is the ``only if'' part of Theorem \ref{distinguishing}.
\item\label{proofquotedcase}
\begin{itemize}
\item
If there is no $s\in\kappa$ such that $\alpha_s=\omega^{\beta_s}\cdot\left(m_s+1\right)$, then $\alpha_i=\overline\omega\left[\beta_i,m_i\right]$ for all $i\in\kappa$ and the result is given by Theorem \ref{simplemultiples}.
\item
If there exists $s\in\kappa$ with $\alpha_s=\omega^{\beta_s}\cdot\left(m_s+1\right)$ and $m_i=1$ for all $i\in\kappa\setminus\left\{s\right\}$, then assume without loss of generality that $\operatorname{CB}\left(\beta_s\right)$ is minimal among any $s\in\kappa$ with these properties. By definition of case \ref{proofquotedcase}, there must still exist $t\in\kappa$ such that $\operatorname{CB}\left(\beta_t\right)<\operatorname{CB}\left(\beta_s\right)$, and so the result is given by the ``if'' part of Theorem \ref{distinguishing}.
\item
Otherwise, let $c:\omega^\beta\cdot m+1\rightarrow\kappa$ be a colouring, and assume for simplicity that $\beta_i>0$ for all $i\in\kappa$, the other case being no harder. First note that if $c^{-1}\left(\left\{j\right\}\right)$ contains a homeomorphic copy of $\omega^{\beta_j+1}$ for some $j\in\kappa$, then we are done. Therefore by Lemma \ref{findinspare} we may assume that for each $l\in m$ and each $i\in\kappa$, there exists $Y_{i,l}\subseteq c^{-1}\left(\left\{i\right\}\right)\cap\left[\omega^\beta\cdot l+1,\omega^\beta\cdot\left(l+1\right)\right)$ with $Y_{i,l}\cong\omega^{\beta_i}$. Now by Theorem \ref{simplemultiples}, there exists $j\in\kappa$ and $X\subseteq c^{-1}\left(\left\{j\right\}\right)$ with $X\cong\omega^{\beta_j}\cdot m_j+1$, and moreover by the proof of that theorem we may assume that
\[X\subseteq\bigcup_{l\in S}\left[\omega^\beta\cdot l+1,\omega^\beta\cdot\left(l+1\right)\right]\]
for some $S\subseteq m$ with $\left|S\right|=m_j$.
Two possibilities now remain.
\begin{itemize}
\item
If there exist distinct $s,t\in\kappa$ with $m_s,m_t\geq 2$, then $m>m_i$ for all $i\in\kappa$.
\item
If there exists $s\in\kappa$ with $\alpha_s=\overline\omega\left[\beta_s,m_s\right]$, $m_s\geq2$ and $m_i=1$ for all $i\in\kappa\setminus\left\{s\right\}$, then $m>m_i$ for all $i\in\kappa\setminus\left\{s\right\}$. If $j=s$ then we are done, so we may assume that $j\neq s$.
\end{itemize}
In either case we have $m>m_j$. Therefore there exists $l\in m\setminus S$, whence $X\cup Y_{j,l}$ is a homeomorphic copy of $\omega^{\beta_j}\cdot\left(m_j+1\right)$ in colour $j$, which suffices.\qedhere
\end{itemize}
\end{enumerate}
\end{enumerate}
\end{enumerate}
\end{proof}

\bibliographystyle{plain}
\bibliography{bibliography}

\end{document}